\documentclass[12pt]{amsart}
\usepackage[utf8]{inputenc}

\usepackage{amsmath, amssymb, amsthm, mathrsfs, graphicx,float, mathtools}
\usepackage{comment}
\usepackage[shortlabels]{enumitem}
\usepackage[numbers]{natbib}
\usepackage[margin=1in]{geometry}
\usepackage{stmaryrd} 
\usepackage{xcolor}
\usepackage{tikz-cd}
\usepackage{tikz}

\usepackage{yfonts}

\graphicspath{{graphics/}}

\newtheorem{theorem}{Theorem}
\newtheorem{proposition}[theorem]{Proposition}
\newtheorem{lemma}[theorem]{Lemma}
\newtheorem{corollary}[theorem]{Corollary}

\theoremstyle{definition}
\newtheorem{definition}[theorem]{Definition}
\newtheorem{example}[theorem]{Example}
\newtheorem{remark}[theorem]{Remark}

\numberwithin{theorem}{section}

\makeatletter
\providecommand{\leftsquigarrow}{%
  \mathrel{\mathpalette\reflect@squig\relax}%
}
\newcommand{\reflect@squig}[2]{%
  \reflectbox{$\m@th#1\rightsquigarrow$}%
}
\makeatother

\renewcommand{\phi}{\varphi} 
\renewcommand{\vec}{\mathbf} 

\newcommand{\id}{\mathrm{id}}

\newcommand{\ol}[1]{\overline{#1}}

\newcommand{\emphbf}[1]{\textbf{#1}}

\definecolor{sebgreen1}{rgb}{0.019,0.317,0.149}
\definecolor{sebgreen2}{rgb}{0.784,0.952,0.780}

\definecolor{chrisyellow1}{rgb}{1,0.73,0}
\definecolor{chrisyellow2}{rgb}{0.99,0.99,0.75}

\definecolor{sebdarkgreen}{rgb}{0.019,0.317,0.149}
\definecolor{sebgreen}{RGB}{36,157,55}
\definecolor{seblightgreen}{rgb}{0.784,0.952,0.780}
\definecolor{sebblue}{RGB}{0,123,194}
\definecolor{seblightblue}{RGB}{194,242,255}

\usepackage[pagebackref]{hyperref}
\hypersetup{
  colorlinks   = true,          
  urlcolor     = sebblue,          
  linkcolor    = purple,          
  citecolor   = sebblue             
}

\newcommand{\mm}{\mathfrak{m}}

\renewcommand{\AA}{\mathbb{A}}

\newcommand{\GG}{\mathbb{G}}
\newcommand{\NN}{\mathbb{N}}
\newcommand{\PP}{\mathbb{P}}

\newcommand{\QQ}{\mathbb{Q}}
\newcommand{\ZZ}{\mathbb{Z}}

\newcommand{\OO}{\mathscr{O}}

\newcommand{\UU}{\mathcal{U}}

\newcommand{\Ann}{\mathrm{Ann}}

\newcommand{\Aut}{\mathrm{Aut}}

\newcommand{\Cond}{\mathrm{Cond}}
\newcommand{\GL}{\mathrm{GL}}

\newcommand{\Grass}{\mathrm{Gr}}

\newcommand{\IH}{\mathrm{IH}}

\newcommand{\moduli}{\mathcal{M}}

\newcommand{\Proj}{\mathrm{Proj}\,}
\newcommand{\rdim}{\mathrm{r.dim\;}}

\newcommand{\Spec}{\mathrm{Spec}\,}
\newcommand{\Spine}{\mathrm{Spine}}

\renewcommand{\tilde}{\widetilde}

\newcommand{\BTer}{\mathrm{BTer}}
\newcommand{\Ter}{\mathrm{Ter}}

\newcommand{\SSpec}[2]{\underline{\mathrm{Spec}}_{#1}\, #2}

\newcommand{\Grp}{\mathrm{Grp}}

\newcommand{\Sch}{\mathrm{Sch}}
\newcommand{\Set}{\mathrm{Set}}

\theoremstyle{definition}

\newcommand{\smoothabilityplotcorrected}{
\begin{tikzpicture}[scale=0.35]
    \draw[->] (0,0) -- (33,0) node[right] {$m$};
    \draw[->] (0,0) -- (0,29) node[above] {$g$};
    \foreach \m in {1,...,32} {\node[below] at (\m,-0.1) {\tiny \m};}
    \foreach \g in {0,...,28} {\node[left] at (0,\g) {\tiny \g};}

    \definecolor{darkblue}{RGB}{0,0,220}
    \definecolor{lightblue}{RGB}{173,216,230}
    \definecolor{forestgreen}{RGB}{60,167,18}
    \definecolor{monopurple}{RGB}{148,0,211} 

    \foreach \g in {0,...,28} {
        \foreach \m in {1,...,32} {
            \pgfmathsetmacro{\colorname}{0}

            \ifnum \g<3 \pgfmathsetmacro{\colorname}{1} \fi
            \ifnum \g=3 \ifnum \m<15 \pgfmathsetmacro{\colorname}{1} \fi \fi

            \ifnum \colorname=0 \ifnum \m=1
                \ifnum \g>21
                    \pgfmathsetmacro{\colorname}{2}   
                \else
                    \ifnum \g>15
                        \pgfmathsetmacro{\colorname}{7} 
                    \fi
                \fi
            \fi \fi

            \ifnum \colorname=0
                \ifnum \g=3 \ifnum \m>14 \pgfmathsetmacro{\colorname}{3} \fi \fi
                \ifnum \g=4 \ifnum \m>12 \pgfmathsetmacro{\colorname}{3} \fi \fi
                \ifnum \g=5 \ifnum \m>12 \pgfmathsetmacro{\colorname}{3} \fi \fi
                \ifnum \g>5
                    \pgfmathsetmacro{\mg}{2*\g+1}
                    \pgfmathparse{\m >= int(\mg) ? 1 : 0}
                    \ifnum \pgfmathresult=1 \pgfmathsetmacro{\colorname}{3} \fi
                \fi
            \fi

            \ifnum \colorname=0 \ifnum \g>2
                \pgfmathsetmacro{\threshold}{\g + 5 + 7/(\g - 2)}
                \pgfmathsetmacro{\upper}{2*\g+1}
                \pgfmathparse{(\m >= \threshold) && (\m < \upper) ? 1 : 0}
                \ifnum \pgfmathresult=1 \pgfmathsetmacro{\colorname}{4} \fi
            \fi \fi

            \ifnum \colorname=0
                \ifnum \g=23 \ifnum \m=29 \pgfmathsetmacro{\colorname}{4} \fi \fi
                \ifnum \g=25 \ifnum \m=30 \pgfmathsetmacro{\colorname}{4} \fi \fi
            \fi

            \ifnum \colorname=0
                \pgfmathtruncatemacro{\nval}{\m-\g}                 
                \ifnum \nval>0
                    \pgfmathtruncatemacro{\cval}{\nval*(\nval+1)/2} 
                    \ifnum \m<\numexpr\cval+1\relax                 
                        \ifodd\g \def\evar{1}\else \def\evar{0}\fi  
                        \ifnum \g=8 \def\evar{1}\fi                 
                        \pgfmathtruncatemacro{\lval}{(\g-2)*(\m-\g-5)}
                        \ifnum \lval>\numexpr6*\evar\relax          
                            \pgfmathsetmacro{\colorname}{6}
                        \fi
                    \fi
                \fi
            \fi

            \ifnum \colorname=0
                \pgfmathparse{3*\g < \m ? 1 : 0}
                \ifnum \pgfmathresult=1
                    \pgfmathparse{(\m - \g)*\g >= 3*\g - 3 + 2*\m ? 1 : 0}
                    \ifnum \pgfmathresult=1 \pgfmathsetmacro{\colorname}{5} \fi
                \else
                    \pgfmathsetmacro{\b}{Mod(\g + \m, 4) == 0 ? 0 : (Mod(\g + \m, 4) == 2 ? 4 : 1)}
                    \pgfmathparse{(\g+\m)^2 - \b >= 24*\g - 24 + 16*\m ? 1 : 0}
                    \ifnum \pgfmathresult=1 \pgfmathsetmacro{\colorname}{5} \fi
                \fi
            \fi

            \ifnum \colorname>0
                \ifnum \colorname=1 \def\dotcolor{white}      \fi
                \ifnum \colorname=2 \def\dotcolor{black}      \fi
                \ifnum \colorname=3 \def\dotcolor{darkblue}   \fi
                \ifnum \colorname=4 \def\dotcolor{lightblue}  \fi
                \ifnum \colorname=5 \def\dotcolor{orange}     \fi
                \ifnum \colorname=6 \def\dotcolor{forestgreen}\fi
                \ifnum \colorname=7 \def\dotcolor{monopurple} \fi
                \filldraw[fill=\dotcolor,draw=black] (\m,\g) circle (0.3);
            \fi
        }
    }

\end{tikzpicture}
}

\title{Close connectedness of the moduli stack of reduced curves}
\author{Sebastian Bozlee}
\date{\today}

\begin{document}

\begin{abstract}
We prove that the moduli stack of all reduced $n$-pointed curves is ``closely connected'' in characteristic zero, in the sense that each irreducible component of the stack intersects the component of smoothable curves. We achieve this by performing a detailed study of Ishii's territories, moduli schemes parametrizing reduced curve singularities together with a normalization map. We give explicit equations for territories, bound their dimensions, describe certain functoriality properties, and study the action of several groups on territories. Along the way, we prove the existence of non-smoothable reduced curve singularities in new ranges, generalizing work of Mumford, Pinkham, Greuel, and Stevens.
\end{abstract}

\maketitle

\tableofcontents

\section{Introduction}

Using Ishii's theory of territories, it has been shown that the moduli stack $\UU_{g,n}$ of all reduced $n$-pointed curves of genus $g$ is connected \cite{bozlee_connectedness}. This paper improves the result to the following in characteristic 0, which we call ``close connectedness.''

\begin{theorem} \label{thm:close_connectedness_intro} (= Theorem \ref{thm:close_connectedness})
Let $k$ be a field of characteristic 0. Then each irreducible component of $\UU_{g,n} \times \Spec k$ intersects the irreducible substack of smoothable curves.
\end{theorem}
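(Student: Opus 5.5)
Since every component of $\UU_{g,n}\times\Spec k$ has a dense open substack, the plan is to show that a general curve $C$ in an arbitrary irreducible component $Z$ degenerates, inside $Z$, to a curve that is also a limit of smooth curves. The natural invariant to control is the normalization datum. A curve in $Z$ is determined by three pieces of data: its (possibly disconnected) normalization $\tilde C$ with marked points, the finitely many preimages of the singular points, and the singularity types, which are recorded as points of Ishii's territories $\Ter$. After shrinking to an open dense subset of $Z$ we may assume this combinatorial type is constant. Then $Z$ is, up to finite data, dominated by the product of a moduli space of pointed smooth curves (the normalizations) with a product of territories. Varying the normalization only moves inside a connected space, and smooth pointed curves degenerate freely. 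The problem therefore reduces to degenerating the singularities themselves within the territory.

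The key step is local. I will use the $\GG_m$-action on territories coming from scaling the uniformizers at the branch points. Its limit at $t\to 0$ sends a singularity $\OO\subset \prod_i k[[t_i]]$ to its associated "monomial" or graded singularity. If the territory is affine with this action having nonnegative weights, then the limit exists and lies in the closure of the orbit. The limit singularity has the same $\delta$-invariant and the same number of branches. Hence gluing via the territory (the functoriality of territories in families of normalizations) gives a one-parameter family inside $Z$, constant in genus, whose special fiber has only graded singularities. Next I push further toward a canonical point: the seminormal-type singularity $\OO = k + \mm$, i.e. the union of coordinate axes in $\AA^r$ with suitable $\delta$, or a further limit in which all branch contributions are concentrated. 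I expect to show this point lies in the closure of every $\GG_m$-fixed stratum of the given $(r,\delta)$ territory. In characteristic 0, such a singularity is smoothable; rational $m$-fold points and their decorations are smoothable by classical deformation theory, e.g. Greuel–Stevens type arguments. The resulting global curve is then a limit of smooth curves, because local smoothings glue globally. For reduced projective curves the obstructions to globalizing lie in $H^2$, which vanishes.

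The main obstacle is that the $\GG_m$-limit may not be the same for all points of a non-irreducible territory, and a graded singularity need not be smoothable. Mumford, Pinkham and Stevens give non-smoothable graded examples. So the real work is to show that inside $Z$, after normalization data is fixed, one can move the graded limit further, e.g. by letting branch points collide on $\tilde C$ or by passing to a larger territory. The target is a singularity that is at the same time a limit of the general singularity of $Z$ and is smoothable. My first attempt will be to let additional smooth points of $\tilde C$ collide into the singular point. This changes the territory but keeps the family inside $Z$. The aim is to reach a singularity whose tangent cone is a union of lines in general position; such a singularity is smoothable in characteristic 0. Characteristic 0 is used both for this smoothability and for the existence of the needed torus limits.
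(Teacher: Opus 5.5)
There is a genuine gap, and it sits exactly where you say ``the real work'' is. The torus limit only brings you to a $T$-fixed point, i.e.\ a join of monomial singularities $\Spec k[M_i]$. Such points can be non-smoothable; the Buchweitz monomial examples in genus $16$--$21$ are cases. Since they are fixed, no further torus degeneration moves them, so your hope that a canonical point ``lies in the closure of every $\GG_m$-fixed stratum'' cannot be realized with $\GG_m$ alone.

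Your fallback targets also fail:
\begin{enumerate}
\item The coordinate axes $k+\mm$ have genus $0$, so they are not in $\Ter^g_{\vec{c}}$ for $g>0$.
\item Having tangent cone a union of lines in general position does not imply smoothability. The general $L^n_m$ is non-smoothable in the ranges of Pinkham, Greuel and Stevens.
\item Colliding extra points of $\tilde C$ into the singular point alters the branch and genus bookkeeping. More importantly, you give no reason the result stays in $Z$. After the first degeneration you are at a non-general point $x'$ that may lie on several components. A family through $x'$ whose fibers are not isomorphic to $x'$ can leave $Z$, unless it lies in an irreducible set containing the general point.
\end{enumerate}
As minor points, territories are projective, not affine, and torus limits exist by properness in any characteristic. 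So characteristic $0$ is not needed there.

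The missing idea is to use the full connected automorphism group $G^\circ_{\vec{c}}$ of $A^+_{\vec{c}}$, not just the torus. It contains the unipotent reparametrizations $\phi_a: t_i \mapsto t_i + a^{-1}t_i^2$. For a monomial subalgebra $B$, the limit $\lim_{a\to 0}\phi_a(B)$ is again monomial, with every exponent weakly raised. At least one exponent strictly increases unless $B$ is already a partition subalgebra $k\cdot 1+\mathrm{Span}\{t_i^d : d_i\le d<c_i\}$. The proof is a controlled Gaussian elimination that uses nonvanishing of the binomial coefficients $\binom{j}{\ell}$, and this is where characteristic $0$ enters. In characteristic $p$ the subalgebra generated by $t^p$ is fixed by every automorphism.

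Iterating the torus limit and then the $\phi_a$-limits shows that $\overline{G^\circ\cdot\tilde x}$ contains a partition subalgebra, and partition singularities are smoothable. Because $\overline{G^\circ\cdot\tilde x}$ is irreducible and contains the general point $x$ of the component, its image stays in that component. This settles the membership issue your chain of degenerations leaves open. Without this input, or a substitute, your argument does not get past monomial singularities.
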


Our primary tool is that of territories of curve singularities, moduli spaces parametrizing reduced algebraic curve singularities together with a normalization map, previously studied in \cite{ishii_moduli_subrings, ishii_moduli_equisingular_curves, chris_thesis, bozlee_guevara_smyth, bozlee_connectedness}. This paper is organized around a detailed study of these moduli spaces, with an eye towards implications for the geometry of $\UU_{g,n}$. Sections \ref{sec:basic_theory}, \ref{sec:torus_action}, and \ref{sec:aut_action} contribute directly to Theorem \ref{thm:close_connectedness_intro}. We now give a definition of territories to get us started. (A more precise definition will be given in Section \ref{sec:basic_theory}.)

\begin{definition}
The \emphbf{territory of $m$-branch curve singularities of genus $g$ with conductances at most $\vec{c} = (c_1, \ldots, c_m)$}, denoted $\Ter^g_{\vec{c}}$, is the closed subscheme of the grassmannian $\Grass(n - g, A^+_{\vec{c}})$ parametrizing corank $g$ subalgebras of
\[
  A^+_{\vec{c}} \cong \ZZ[t_1, \ldots, t_m] / (t_i^{c_i},t_it_j : i, j \in \{ 1, \ldots, m \}, i \neq j),
\]
where $n$ is the rank of $A^+_{\vec{c}}$.
\end{definition}

We explain in Section \ref{sec:basic_theory} the relationship of this definition with curve singularities and give explicit equations for $\Ter^g_{\vec{c}}$. One finds that territories can have multiple irreducible components, may be generically nonreduced on these components, and the irreducible components themselves may possess singularities. This leads us to look for better behaved structures inside territories.

\bigskip

In Section \ref{sec:spine}, we consider a particularly simple part of a territory which we call the \emphbf{spine}, isomorphic to a grassmannian.

\begin{theorem}
Each territory $\Ter^g_{\vec{c}}$ contains a (possibly empty) closed subscheme $\Spine(g,\vec{c})$ isomorphic to $\Grass(c - m - g, \sum_{i=1}^m \lfloor \frac{c_i}{2} \rfloor)$, where $c = \sum_i c_i$.
\end{theorem}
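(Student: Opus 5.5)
The plan is to realize the spine as the locus of subalgebras that lie inside a fixed square-zero ideal of $A^+_{\vec{c}}$. Use the basis $t_i^j$, $1 \le i \le m$, $1 \le j \le c_i - 1$, of $A^+_{\vec{c}}$, so that $n = c - m$. Define the $\ZZ$-submodule
\[
  J_{\vec{c}} = \operatorname{span}\{ t_i^j : 1 \le i \le m,\ \lceil c_i/2 \rceil \le j \le c_i - 1 \}.
\]

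\emph{Step 1: $J_{\vec{c}}$ is a free direct summand of rank $\sum_i \lfloor c_i/2 \rfloor$ with $J_{\vec{c}}^2 = 0$.} It is spanned by a subset of a basis, hence a free direct summand. Its rank is $\sum_i \bigl(c_i - \lceil c_i/2 \rceil\bigr) = \sum_i \lfloor c_i/2 \rfloor$. Mixed products $t_i^a t_j^b$ with $i \ne j$ vanish. For $a, b \ge \lceil c_i/2 \rceil$ we have $a + b \ge c_i$, so $t_i^a t_i^b = 0$ as well.

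\emph{Step 2: define $\Spine(g,\vec{c})$ and embed it in the Grassmannian.} Set $\Spine(g,\vec{c}) = \Grass(c - m - g, J_{\vec{c}})$. Functorially, for a ring $R$, its $R$-points are rank $c-m-g$ direct summands $V \subseteq J_{\vec{c}} \otimes R$. Since $J_{\vec{c}}$ is a direct summand of $A^+_{\vec{c}}$, such a $V$ is also a direct summand of $A^+_{\vec{c}} \otimes R$. This gives a morphism $\Spine(g,\vec{c}) \to \Grass(n-g, A^+_{\vec{c}})$, recalling $n - g = c - m - g$.

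This morphism is a closed immersion onto the locus where the composite $V \to A^+_{\vec{c}} \otimes R \to (A^+_{\vec{c}}/J_{\vec{c}}) \otimes R$ vanishes. That locus is cut out by the vanishing of the corresponding entries of the tautological map, i.e.\ a Schubert-type condition. Conversely, if $V$ is a summand of $A^+_{\vec{c}} \otimes R$ lying in $J_{\vec{c}} \otimes R$, it is a summand of $J_{\vec{c}} \otimes R$: restrict a retraction of $A^+_{\vec{c}} \otimes R$ onto $V$ to $J_{\vec{c}} \otimes R$.

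\emph{Step 3: the embedding factors through $\Ter^g_{\vec{c}}$.} By Step 1, for any such $V$ we have $V \cdot V \subseteq J_{\vec{c}}^2 \otimes R = 0 \subseteq V$. Hence $V$ is a (nonunital) subalgebra of corank $g$, so the $R$-point lies in $\Ter^g_{\vec{c}}$. Because $\Ter^g_{\vec{c}}$ is a closed subscheme of the Grassmannian, the closed immersion from Step 2 factors through it, and $\Spine(g,\vec{c})$ is a closed subscheme of $\Ter^g_{\vec{c}}$. It is empty exactly when $c - m - g$ lies outside $[0, \sum_i \lfloor c_i/2 \rfloor]$.

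The main point requiring care is Step 3 in the scheme-theoretic sense: one must check that the equations for $\Ter^g_{\vec{c}}$ from Section~\ref{sec:basic_theory} vanish identically on this sub-Grassmannian, not merely at field points. The functor-of-points description above handles this, provided $\Ter^g_{\vec{c}}$ is defined (or shown) to represent corank $g$ subalgebras over arbitrary base rings. I would verify that against the precise definition given in Section~\ref{sec:basic_theory}.
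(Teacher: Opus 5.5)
Your proposal is correct and is essentially the paper's argument written out by hand. The paper takes the spine to be $\Ter^{g-\delta}_{A^{spine}_{\vec{c}}}$ for the subring $A^{spine}_{\vec{c}} = \ZZ + (t_i^{\lceil c_i/2\rceil})$, whose maximal ideal is exactly your square-zero summand $J_{\vec{c}}$. It then cites two general facts: territories of subalgebras embed as closed subschemes (Lemma~\ref{lem:ter_basics}(iii)), and the territory of $\OO_S \oplus \mm$ with $\mm^2 = 0$ is a Grassmannian (Lemma~\ref{lem:sq_zero_territory}); your Steps 2--3 prove these directly in this case. The one slip is notational: the monomials $t_i^j$ with $j \geq 1$ span $\mm_{\vec{c}}$, not $A^+_{\vec{c}}$, so your embedding should land in $\Grass(c-m-g, \mm_{\vec{c}})$ (Lemma~\ref{lem:territory_in_grass_max_ideal}), or equivalently send $V \mapsto R\cdot 1 \oplus V$ into $\Grass(c-m+1-g, A^+_{\vec{c}})$.
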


We obtain dimension bounds on territories as an easy consequence.

\begin{theorem} (= Theorem \ref{thm:spine_dim}) \label{thm:spine_dim_intro} 
For any nonempty territory $\Ter^g_{\vec{c}}$ and any algebraically closed field $k$,
\[
  (c - m - g)\left(g + m - \frac{c}{2} - \frac{p}{2} \right) \leq \dim(\Ter^g_{\vec{c}} \times \Spec k) \leq (c - m - g) g
\]
where $c = \sum_i c_i$ and $p$ is the number of odd $c_i$'s.
\end{theorem}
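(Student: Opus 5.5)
The plan is to derive both inequalities from the two containments already available: the territory sits as a closed subscheme of a grassmannian (upper bound), and it contains the spine, which is itself a grassmannian (lower bound). Both inclusions are closed immersions defined over $\ZZ$, so they persist after base change to $\Spec k$, and the dimension bounds follow from the dimensions of the two grassmannians.

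\emph{Upper bound.} First compute the rank of $A^+_{\vec{c}}$. A $\ZZ$-basis is $\{t_i^j : 1 \le i \le m,\ 1 \le j \le c_i - 1\}$, so $n = \sum_i (c_i - 1) = c - m$. By definition $\Ter^g_{\vec{c}}$ is a closed subscheme of $\Grass(n-g, A^+_{\vec{c}})$. Hence
\[
\dim(\Ter^g_{\vec{c}} \times \Spec k) \le \dim \Grass(n-g, n) = (n-g)g = (c-m-g)g.
\]

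\emph{Lower bound.} Use the closed subscheme $\Spine(g,\vec{c}) \cong \Grass(c-m-g, \sum_i \lfloor c_i/2 \rfloor)$ from the preceding theorem. Heuristically, the spine consists of subspaces containing the "upper half" of each branch. The products of elements in that upper half vanish, so any such subspace is closed under multiplication.

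Next, simplify the second argument of the grassmannian. Since $\lfloor c_i/2\rfloor = c_i/2$ for even $c_i$ and $(c_i-1)/2$ for odd $c_i$, we get $\sum_i \lfloor c_i/2 \rfloor = (c-p)/2$. When the spine is nonempty, the dimension of $\Grass(a,b)$ is $a(b-a)$, which here gives
\[
(c-m-g)\left(\tfrac{c-p}{2} - (c-m-g)\right) = (c-m-g)\left(g+m-\tfrac{c}{2}-\tfrac{p}{2}\right).
\]
This is exactly the stated lower bound, and it bounds $\dim(\Ter^g_{\vec{c}}\times\Spec k)$ from below because the spine is a closed subscheme.

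\emph{Degenerate cases.} The only real care is needed where the spine grassmannian is empty, namely when $c-m-g<0$ or $c-m-g > (c-p)/2$.
\begin{itemize}
\item If $c-m-g<0$, the ambient grassmannian is empty, so the territory is empty and is excluded by hypothesis.
\item If $c-m-g > (c-p)/2$ with $c-m-g\ge 0$, the first factor $c-m-g$ is nonnegative and the second factor $g+m-\tfrac{c}{2}-\tfrac{p}{2} = \tfrac{c-p}{2}-(c-m-g)$ is negative. So the lower bound is $\le 0$. Since the territory is nonempty by hypothesis, its dimension is $\ge 0$, and the bound holds trivially.
\end{itemize}
With these cases handled, both inequalities are established.
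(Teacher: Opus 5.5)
Your overall strategy is the paper's: an ambient grassmannian gives the upper bound, and the spine $\Grass\bigl(c-m-g,\tfrac{c-p}{2}\bigr)$ gives the lower bound. Your computation of the spine's dimension is correct. Your handling of the cases where the spine is empty is also correct, and it fills in a point the paper leaves implicit.

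The upper-bound step, however, rests on a miscount. The set $\{t_i^j : 1\le i\le m,\ 1\le j\le c_i-1\}$ is a basis of the maximal ideal $\mm$, not of $A^+_{\vec c}$. You have dropped the constant $1$, so $A^+_{\vec c}$ actually has rank $c-m+1$. The literal embedding $\Ter^g_{\vec c}\subseteq\Grass(c-m+1-g, A^+_{\vec c})$ therefore only yields $\dim\le (c-m+1-g)g$. That is weaker than the claimed bound by $g$.

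The missing observation is that every subalgebra contains $\OO_S\cdot 1$. Since $A^+_{\vec c}=\OO_S\cdot 1\oplus\mm$, any $B$ in the territory splits as $B=\OO_S\cdot 1\oplus(B\cap\mm)$. Moreover $\mm/(B\cap\mm)\cong A^+_{\vec c}/B$ is locally free of rank $g$. Hence $B\mapsto B\cap\mm$ realizes $\Ter^g_{\vec c}$ as a closed subscheme of $\Grass(c-m-g,\mm)$; this is Lemma \ref{lem:territory_in_grass_max_ideal}, which the paper uses. That grassmannian has dimension $(c-m-g)g$, and your basis is exactly a basis of this $\mm$. With this correction your argument coincides with the paper's proof.

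A minor wording point concerns your heuristic description of the spine. It should consist of subspaces of $\mm$ \emph{contained in} the square-zero ideal $(t_1^{\lceil c_1/2\rceil},\dots,t_m^{\lceil c_m/2\rceil})$, not subspaces \emph{containing} the upper half. Only the former are automatically closed under multiplication, by Lemma \ref{lem:sq_zero_territory}.
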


Using these dimension bounds, we find families of singularities of dimension too large to be contained in the boundary of $\moduli_{g,n}$ in $\UU_{g,n}$, which consequently must contain non-smoothable singularities. This yields an existence result for non-smoothable singularities.

\begin{theorem} \label{thm:smoothability_intro} (= Theorem \ref{thm:smoothability})
Suppose $g \geq 0$, $m \geq 1$, and $(g,m) \neq (0,1)$.

\begin{enumerate}
  \item If $3g > m$, then there exists a reduced non-smoothable singularity of genus $g$ with $m$ branches whenever
\[
  (g + m)^2 - \beta \geq 24g - 24 + 16m
\]
where
\[
  \beta = \begin{cases}
      0 & \text{ if } g + m \equiv 0 \pmod{4} \\
      1 & \text{ if } g + m \equiv 1,3 \pmod{4} \\
      4 & \text{ if } g + m \equiv 2 \pmod{4}.
  \end{cases}
\]
  \item If $3g \leq m$, then there exists a reduced non-smoothable singularity of genus $g$ with $m$ branches whenever
  \[
    (m - g)g \geq 3g - 3 + 2m
  \]
\end{enumerate}
\end{theorem}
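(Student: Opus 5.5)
Following the hint in the introduction, the plan is a dimension count. We compare the dimension of a family of genus $g$, $m$-branch singularities, obtained from the lower bound in Theorem \ref{thm:spine_dim_intro}, with the dimension of the locus such singularities could occupy inside the smoothable component. The latter is $\overline{\moduli}_{g,n}$ for a suitable global model.

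\textbf{Step 1: globalize.} Fix a singularity type with $m$ branches and genus $g$. Attach $m$ rational tails (or glue $m$ smooth rational branches) so that the singularity becomes the unique singular point of a reduced projective curve $C$. Each branch of the normalization is a copy of $\PP^1$. The arithmetic genus is then $g + (\text{genus of normalization}) - (\text{number of components}) + 1 = g - m + 1$ when the branches are distinct $\PP^1$'s; we adjust as needed. Alternatively, and more cleanly, we take the normalization to be $m$ disjoint copies of $\PP^1$, which gives arithmetic genus $g-m+1$; then we add marked points to kill automorphisms. The territory gives a family of such curves, one for each point of $\Ter^g_{\vec c}$, modulo the action of $\prod \Aut(\hat{\OO}_{\PP^1,0})$ truncated and the automorphisms of the $\PP^1$'s.

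\textbf{Step 2: bound the family.} By Theorem \ref{thm:spine_dim_intro}, this family has dimension at least
\[
(c-m-g)\left(g+m-\tfrac c2-\tfrac p2\right) - (\text{dimension of the reparametrization group}),
\]
and we choose $\vec c$ to maximize it. The group that must be subtracted is generated by $\GG_m^m$ scalings together with the automorphisms of the truncated power series rings. I expect the paper's Sections \ref{sec:torus_action} and \ref{sec:aut_action} to show that on the spine, the relevant orbits have dimension at most about $m$ per branch, or that this correction is absorbed. The two regimes of the statement arise from two choices of $\vec c$:
\begin{itemize}
\item[(2)] When $3g\le m$, we use all $c_i=2$, a spine of dimension $(m-g)g$, against a moduli count $3g-3+2m$. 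That count looks like $\dim \moduli_{g,m}$ plus $m$: the genus $g$ enters through the $g$ gluing conditions.
\item[(1)] When $3g>m$, we optimize over the $c_i$. Writing $c=\sum c_i$, the product $(c-m-g)(g+m-c/2-p/2)$ is quadratic in $c$ and is maximized near $c-m-g\approx (g+m)/2$. Rounding with parity constraints produces $\big((g+m)^2-\beta\big)/8$. Comparing this with $3g-3+2m$ gives $(g+m)^2-\beta\ge 24g-24+16m$.
\end{itemize}

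\textbf{Step 3: conclude.} Suppose every singularity in the family were smoothable. Then the global curves would lie in the smoothable component, which has dimension $3g'-3+n$ for the arithmetic genus $g'$ and the number $n$ of marked points. Within that component, the locus of curves whose unique singularity is analytically of the given type has the normalization data fixed, so its dimension is bounded by $\dim$ of the component minus the dimension of the normalization moduli. This contradicts the lower bound. Hence some point of the family is non-smoothable.

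\textbf{Main obstacle.} The main obstacle is the bookkeeping in Steps 1–3. We must check that the comparison number is exactly $3g-3+2m$ for the correct global model; I expect this to be the dimension of the stratum of $\overline{\moduli}$ with a genus-$g$, $m$-branch singularity, counted via the $\delta$-invariant. We must also justify that the group actions and the identification of distinct territory points with distinct isomorphism classes lose at most the accounted dimensions. I would first try to reduce this to the standard Pinkham–Greuel argument: a smoothable singularity has deformation-theoretic smoothing component of dimension governed by the genus and branch count. If that fails, I would instead bound the dimension of the boundary strata directly.
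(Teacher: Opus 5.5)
Your overall strategy is the paper's: a large-families count over the spine using the lower bound of Theorem~\ref{thm:spine_dim}, then an optimization over $\vec c$. Your choices of $\vec c$ in the two regimes are also the right ones. You may reduce to even $c_i$, since raising an odd $c_i$ by one preserves $c+p$ and raises $c-m-g$. The constraint $c=2k\ge 2m$, coming from $c_i\ge 2$, is what separates $3g>m$ from $3g\le m$. However, the dimension bookkeeping you flag as the main obstacle is where the proof actually lives. Your tentative resolutions of it are incorrect, so as written Steps 1--3 do not yield the stated inequality.

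First, the global model. Gluing $m$ disjoint copies of $\PP^1$ at a singularity with $\delta=g+m-1$ gives arithmetic genus $1-m+\delta=g$, not $g-m+1$; you used $g$ where $\delta$ belongs. Marking $1$ and $\infty$ on each copy gives a morphism $\Ter^g_{\vec c}\to\UU_{g,2m}\setminus\moduli_{g,2m}$. The number $3g-3+2m$ is simply $\dim\mathcal V_{g,2m}=\dim\moduli_{g,2m}$, not the dimension of an equisingular stratum. Since $\moduli_{g,2m}$ is dense open in $\mathcal V_{g,2m}$, the singular smoothable curves form a locus of dimension at most $3g-4+2m$, and that is the correct comparison in Step 3.

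Second, and more importantly, nothing should be subtracted for $(\GG_m)^m$ or for automorphisms of truncated power series rings. You are counting global marked curves, not analytic types. An isomorphism between two of the glued curves lifts to an automorphism of the normalization fixing $0$, $1$, $\infty$ on each $\PP^1$. It is therefore the identity, so the two subalgebras coincide. Hence the map is injective on points with trivial stabilizers. By a dimension-of-image result for quasi-DM morphisms (the paper uses Stacks Tag 0DS6), the image of $\Spine(g,\vec c)$ then has the full dimension $(c-m-g)(g+m-\tfrac c2-\tfrac p2)$. That distinct spine points may give analytically isomorphic singularities is harmless, since you only need one non-smoothable point. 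If you subtracted a positive group dimension while comparing against $3g-3+2m$, you would get a strictly weaker criterion than the one stated. So this injectivity observation is the missing idea, not a detail.
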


See Figure \ref{fig:smoothability} for a comparison to other results on the existence of non-smoothable singularities, and Remark \ref{rmk:nonsmoothability_literature} for more details. For a more complete survey of results on non-smoothable singularities, we direct the reader to \cite{stevens_nonsmoothable}.

\begin{figure}
\smoothabilityplotcorrected
\caption{The white dots indicate that all reduced curve singularities of genus $g$ and number of branches $m$ are smoothable \cite[Lemma 4-4, Proposition 4-5]{stevens_la_rabida}. The remaining dots indicate the existence of non-smoothable reduced curve singularities of type $(g,m)$. Black is due to Greuel's refinement \cite[Section 3.2]{greuel} of an argument of Mumford \cite{mumford_pathologies}. The dark blue, due to Pinkham \cite[Theorem 1.11]{pinkham_thesis}, light blue, due to Greuel \cite[Section 3.4]{greuel}, and green, due to Stevens \cite[Proposition 10]{stevens_canonical_curves} are witnessed by a singularity consisting of $m$ lines through the origin of some $\AA^n$. The purple dots are witnessed by monomial singularities, non-smoothable by a criterion of Buchweitz \cite{buchweitz}. The orange dots are due to Theorem \ref{thm:smoothability_intro}. To the author's knowledge, it remains open whether non-smoothable singularities of type $(g,m)$ exist for the blank region.
}
\label{fig:smoothability}
\end{figure}

\bigskip

In Section \ref{sec:functoriality} we articulate several morphisms between subschemes of $\Ter^g_{\vec{c}}$ corresponding to the operations of transverse union of singularities, contraction of branches of singularities, and restriction to branches. The subschemes on which these operations are well-defined yield a natural decomposition of $\Ter^g_{\vec{c}}$ into disjoint locally closed subschemes.

\begin{theorem}
Let $\vec{c} = (c_1,\ldots, c_m)$ be a vector of positive integers with sum $c$. Let $I \sqcup I'$ be a partition of $[m]$ into nonempty subsets. Let $g$ be a non-negative integer. Then for each partition $g_I + g_{I'} = g$ of $g$ into non-negative integers, there is a locally closed subscheme $\Ter^{g_I, g_{I'}}_{I}$ on which the following morphisms are defined:
\begin{enumerate}
 \item A morphism $\rho_{I'} : \Ter^{g_I, g_{I'}}_{I} \to \Ter^{g_{I'}}_{\vec{c}|_{I'}}$
induced by restricting the corresponding curve singularity to the branches labeled by $I'$.

\item A morphism $\kappa_I : \Ter^{g_I, g_{I'}}_{I} \to \Ter^{g_I}_{\vec{c}|_{I}}$
induced by contracting the branches labeled by $I'$ of the corresponding curve singularity.

 \item A morphism $\vee : \Ter^{g_I}_{\vec{c}|_I} \times \Ter^{g_{I'}}_{\vec{c}|_{I'}} \to \Ter^{g_I, g_{I'}}_I$ induced by taking the transverse union of the corresponding curve singularities.
\end{enumerate}

Moreover,
\[
  \Ter^{0,g}_I \sqcup  \Ter^{1,g-1}_I \sqcup \cdots \sqcup \Ter^{g,0}_I \to \Ter^g_{\vec{c}}
\]
is a stratification of $\Ter^g_{\vec{c}}$ into disjoint locally closed subschemes such that for each $j$, the closure of $\Ter^{j, g-j}_{I}$ in $\Ter^g_{\vec{c}}$ is contained in the union $\bigcup_{i \leq j} \Ter^{i,g-i}_I$.
\end{theorem}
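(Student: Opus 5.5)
The strategy is to make everything explicit in terms of subalgebras of $A^+_{\vec{c}} = A^+_{\vec{c}|_I} \times A^+_{\vec{c}|_{I'}}$ (product of the non-unital truncated algebras, i.e.\ the direct sum with $t_it_j=0$). For a point $R \subseteq A^+_{\vec{c}}$ of $\Ter^g_{\vec{c}}$, with values in a ring $S$, consider the projection $\pi_{I'}(R) \subseteq A^+_{\vec{c}|_{I'}} \otimes S$ and the intersection $R \cap (A^+_{\vec{c}|_I}\otimes S)$. Heuristically, restricting to the branches in $I'$ is $\pi_{I'}(R)$, contracting the branches in $I'$ is $R\cap A^+_{\vec c|_I}$ (as we keep only functions vanishing on the $I'$ branches, which become constant when those branches are collapsed), and the transverse union of $R_1, R_2$ is $R_1 \times R_2$. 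I would define $\Ter^{g_I,g_{I'}}_I$ as the locus where $\pi_{I'}(R)$ is a locally free subalgebra of corank exactly $g_{I'}$ in $A^+_{\vec{c}|_{I'}}$, with locally free cokernel. Since $R\cap A^+_{\vec c|_I}$ is the kernel of $\pi_{I'}|_R$, on this locus it is locally a direct summand of corank $g - g_{I'} = g_I$ by additivity of ranks. It is closed under multiplication, so $\kappa_I$ and $\rho_{I'}$ are defined there by the universal property of the grassmannian. The map $\vee$ lands in the stratum because $\pi_{I'}(R_1\times R_2)=R_2$.

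Local closedness comes from the fact that the rank of $\pi_{I'}|_R$ (equivalently of the composite $R \to A^+ \to A^+_{\vec c|_{I'}}$ of locally free sheaves) is lower semicontinuous. The locus where the rank is $\ge r$ is open, given by nonvanishing of $r\times r$ minors, and the locus where it is $\le r$ is closed. So the locus of rank exactly $r$ is locally closed, and there the image is a subbundle (on the reduced locus, or scheme-theoretically via the Fitting-ideal stratification). Rank $r$ corresponds to $g_{I'} = n_{I'} - r$, where $n_{I'}=\operatorname{rank}A^+_{\vec c|_{I'}}$, and $g_I = g - g_{I'}$. I also need $0\le g_{I'}\le g$. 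The bound $g_{I'}\ge 0$ is automatic. For $g_{I'}\le g$, note that $\ker\pi_{I'}\subseteq A^+_{\vec c|_I}$ has rank at most $n_I$, so $\operatorname{rank}R - r \le n_I$, which gives $g_{I'}\le g$. Hence the strata with $j=g_I\in\{0,\dots,g\}$ cover $\Ter^g_{\vec c}$ disjointly, and on points this is a set-theoretic partition.

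The closure statement follows from semicontinuity. Larger $g_I$ means larger rank $r$ of the projection, and rank can only drop under specialization. So the closure of $\{\operatorname{rank}=r\}$ lies in $\{\operatorname{rank}\le r\}$, i.e.\ in $\bigcup_{i\le j}\Ter^{i,g-i}_I$.

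The main obstacle is scheme-theoretic: giving $\Ter^{g_I,g_{I'}}_I$ the right (not merely reduced) structure so that the image of $\pi_{I'}|_R$ is locally free with locally free cokernel, making $\rho_{I'}$ and $\kappa_I$ honest morphisms. I would handle this by defining the stratum as the locally closed subscheme cut out by the Fitting ideals of $\operatorname{coker}(R\to A^+_{\vec c|_{I'}})$: take $\mathrm{Fitt}_{g_{I'}-1}=0$ and $\mathrm{Fitt}_{g_{I'}}$ invertible-unit, i.e.\ the flattening stratification of the cokernel. On such a stratum the cokernel is locally free of rank $g_{I'}$, so the image and the kernel are subbundles. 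Checking that $\vee$ factors through this subscheme is then immediate, since the cokernel of $R_1\times R_2\to A^+_{\vec c|_{I'}}$ is the cokernel of $R_2$, which is locally free of rank $g_{I'}$.
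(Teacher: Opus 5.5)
Your proposal is correct and takes essentially the paper's route. The paper defines $\Ter^{g_I,g_{I'}}_I$ as $\Ter^g_{\vec c}$ intersected with a flattening stratum $\Gamma^{g_I,g_{I'}}_I$ of the Grassmannian, characterized equivalently by the corank of $j_I^{-1}(V)$ or of $\pi_{I'}(V)$. It defines $\rho_{I'}$, $\kappa_I$ and $\vee$ by projection, intersection and direct sum exactly as you do, and obtains the closure relations from semicontinuity of rank. Your choice to cut out the stratum by the Fitting ideals of $\mathrm{coker}(\mm_{\mathscr{B}} \to \mm_{\vec c|_{I'}})$ is, if anything, the cleaner scheme-theoretic formulation, since forming that cokernel commutes with base change while forming $j_I^{-1}(V)$ does not in general. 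One caution: the splitting $\mm_{\vec c}=\mm_{\vec c|_I}\oplus\mm_{\vec c|_{I'}}$ holds only for maximal ideals, so the transverse union is $\OO_S\cdot 1\oplus\mm_{R_1}\oplus\mm_{R_2}$, not a product of unital algebras.
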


We also consider the behavior of Gorenstein singularities under contraction.

\bigskip

In Section \ref{sec:isom_hilb}, we study a decomposition of territories into ``Isom-Hilb'' strata, originally studied by Guevara \cite{chris_thesis}. These strata parametrize singularities whose restriction to their branches has fixed genus.

\begin{definition} (= Definition \ref{def:isom_hilb_strata})
Let $\vec{c} = (c_1, \ldots, c_m)$ be a vector of positive integers with sum $c$.
Let $I_1 \subseteq \{1, \ldots, m\}$ be a nonempty proper subset and let $I_2$ be its complement.
Let $g$ be a non-negative integer and let $g = g_1 + g_2 + \gamma$ be a partition of $g$ into non-negative integers. The \emphbf{Isom-Hilb stratum} $\mathrm{IH}^{g_1,g_2,\gamma}_{I_1,I_2}$ is the locally closed subscheme
\[
  \mathrm{IH}^{g_1,g_2,\gamma}_{I_1,I_2} \coloneqq \Ter^{g_1 + \gamma, g_2}_{I_1} \cap \Ter^{g_2 + \gamma, g_1}_{I_2}
\]
of $\Ter^g_{\vec{c}}$.
\end{definition}

As originally observed by Guevara, these strata have an interpretation in terms of points of appropriate Hilbert and Isom schemes, enabled by Goursat's Lemma. We present a functorial version of Goursat's Lemma (Lemma \ref{lem:goursat}) and prove a scheme theoretic enhancement of Guevara's Isom-Hilb interpretation of the strata in Theorem \ref{thm:isom_hilb}.

\bigskip

In Section \ref{sec:torus_action} we consider the natural action of the torus $T = (\GG_m)^m$ on $\Ter^g_{\vec{c}}$ and the limits of the action of its 1-parameter subgroups. We find that these limits are often simpler, for instance, there are always limits corresponding to decomposable singularities, that is, singularities which can be expressed as a transverse union of singularities with fewer branches.

\begin{lemma}[cf. Lemma \ref{lem:limit_decomposable}] \label{lem:limit_decomposable_intro}
Let $B$ be a $k$-point of $\Ter^g_{\vec{c}}$ where $k$ is a field and let $I \sqcup I'$ be a partition of $[m]$ into nonempty subsets. There is a one-parameter subgroup $\gamma_I$ of $T$ such that the limit of $B$ under its action $\lim_{\gamma_I}B$ is in the image of the transverse union map $\vee : \Ter^{g_I}_{\vec{c}|_I} \times \Ter^{g_{I'}}_{\vec{c}|_{I'}} \to \Ter^{g}_{\vec{c}}$ for some integer partition $g_I + g_{I'} = g$ depending on $B$.
\end{lemma}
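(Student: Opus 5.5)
The plan is to take $\gamma_I$ to be the one-parameter subgroup $\lambda \mapsto (\lambda^{a_1},\ldots,\lambda^{a_m})$ with $a_i = 1$ for $i \in I$ and $a_i = 0$ for $i \in I'$. Then $\gamma_I(\lambda)$ scales $t_i$ by $\lambda$ for $i \in I$ and fixes $t_i$ for $i \in I'$. Since $\Ter^g_{\vec{c}}$ is a closed subscheme of the projective scheme $\Grass(n-g, A^+_{\vec{c}})$ and is stable under $T$ (the torus acts by algebra automorphisms of $A^+_{\vec{c}}$), the limit $\lim_{\lambda\to 0}\gamma_I(\lambda)\cdot B$ exists and is again a $k$-point of $\Ter^g_{\vec{c}}$. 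This is the valuative criterion of properness applied to the orbit map $\GG_m \to \Ter^g_{\vec{c}}$.

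The core step is to identify the limit explicitly as an initial subspace. Write $A^+_{\vec{c}}\otimes k = V_I \oplus V_{I'}$, where $V_I$ is spanned by the $t_i^j$ with $i\in I$, and similarly for $V_{I'}$. With respect to $\gamma_I$, $V_I$ has positive weights and $V_{I'}$ has weight $0$. Let $\pi_I, \pi_{I'}$ be the projections.
\begin{enumerate}
\item The weight-$0$ part of the limit is $\pi_{I'}(B)$.
\item The positive-weight part is the "initial form" space of $B\cap V_I$ with respect to the grading by $t$-degree on $I$. This is a finer filtration: at weight $d$ one takes the lowest-weight components of elements of $B$ lying in $V_I$ plus higher.
\end{enumerate}
Concretely, I would define the limit by the standard formula: for each $d$, take the span of lowest-$\gamma_I$-weight components of elements of $B$. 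Since weights on $V_{I'}$ are all zero, any element with nonzero $V_{I'}$-component has initial form equal to that component. This gives $\lim B = \pi_{I'}(B) \oplus \mathrm{in}(B\cap V_I)$, where $\mathrm{in}$ denotes the graded initial space inside $V_I$. Dimension is preserved by the flat-limit property, which I would check directly: $\dim \pi_{I'}(B) + \dim(B\cap V_I) = \dim B$, and taking initial forms preserves dimension.

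Next I would check that each summand is a subalgebra of the appropriate factor, so the limit is $B_I \vee B_{I'}$. Products across the two parts vanish because $t_it_j=0$ for $i\in I$, $j\in I'$. The projection $\pi_{I'}$ is an algebra homomorphism, so $\pi_{I'}(B)$ is a subalgebra of $A^+_{\vec{c}|_{I'}}$. The space $B\cap V_I$ is an ideal-like subalgebra, and initial forms of products are products of initial forms when nonzero, so its initial space is closed under multiplication. The limit is then exactly the transverse union, in the sense of the map $\vee$ from Section \ref{sec:functoriality}, of these two subalgebras. Set $g_I$ equal to the codimension of $\mathrm{in}(B\cap V_I)$ in $V_I$ and $g_{I'}$ equal to the codimension of $\pi_{I'}(B)$ in $V_{I'}$. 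These sum to $g$ by the dimension count, and they depend on $B$.

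The main obstacle is making the limit computation rigorous in the Grassmannian, namely showing that the pointwise initial-form space is the actual limit point. I would do this with a basis of $B$ in echelon form adapted to the weight filtration. Choose basis vectors whose $V_{I'}$-components are independent and span $\pi_{I'}(B)$, then complete with elements of $B\cap V_I$ having distinct leading weight-terms. Applying $\gamma_I(\lambda)$ and rescaling each vector by $\lambda^{-(\text{its lowest weight})}$ gives a $k[\lambda]$-family of bases whose fiber at $0$ is the claimed basis of the initial space. The same argument works for an arbitrary field $k$ without further changes.
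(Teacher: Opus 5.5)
Your proposal is correct and is essentially the paper's argument: the paper takes the same $\gamma_I=\sum_{i\in I}\gamma_i$ and cites Lemma~\ref{lem:gamma_limits}(iii) to conclude that the limit is $\gamma_I$-graded, with its weight-$0$ part on the $I'$-branches and its positive-weight part on the $I$-branches; your normal-basis computation re-derives exactly this, and your identification of the two factors as $\rho_{I'}(B)$ and the initial algebra of $\kappa_I(B)$ is a useful sharpening, since it shows that $(g_I,g_{I'})$ is the index of the stratum $\Ter^{g_I,g_{I'}}_I$ containing $B$. The only slips are cosmetic: $A^+_{\vec c}\otimes k=k\cdot 1\oplus V_I\oplus V_{I'}$ (you dropped the constants), and the basis elements of $B\cap V_I$ need leading forms that are linearly independent within each weight, not merely distinct, because the weight-$d$ space is spanned by the several monomials $t_i^d$ with $i\in I$.
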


A slightly more precise version appears as Lemma~\ref{lem:limit_decomposable}.

There is a natural stratification of $\Ter^g_{\vec{c}}$ related to the torus action according to how much of the subalgebra $B$ belongs to each degree. (Indeed, it is a Bia{\l}ynicki-Birula decomposition, albeit of a singular scheme.)
The stratification by numerical monoids of \cite[Section 3]{ishii_moduli_subrings} is a special case.

\begin{definition}
Let $B$ be a $k$-point of $\Ter^g_{\vec{c}}$. Denote by $\mm$ the maximal ideal of $A^+_{\vec{c}} \otimes k$. The \emphbf{vanishing sequence} of $B$ is the sequence $(k_d)_{d \in \ZZ}$ given by
\[
  k_d = \dim_k(B \cap \mm^d / B \cap \mm^{d+1}).
\]

The \emphbf{stratum with vanishing sequence} $(k_d)$, $\mathcal{Z}_{(k_d)}$, is the locally closed subscheme of $\Ter^g_{\vec{c}}$ on which $\mathscr{B} \cap \mm^d / \mathscr{B} \cap \mm^{d+1}$ is locally free of rank $k_d$ for each $d$.
\end{definition}

By taking several 1-parameter limits, we find that any vanishing sequence is witnessed by a transverse union of monoidal singularities, leading us to our next result. In order to state it, we first recall some basic definitions related to numerical monoids.
\begin{definition}
A \emphbf{numerical monoid} is a submonoid $M$ of $\NN$ such that $\NN - M$ is a finite set.
The smallest integer $c(M)$ such that $c(M) + \NN \subseteq M$ is called the \emphbf{conductor} of $M$. The size $g(M)$ of the set $\NN - M$ is called the \emphbf{genus} of $M$.
\end{definition}

\begin{theorem} \label{thm:fixed_points_are_monoids_intro} (= Theorem \ref{thm:fixed_points_are_monoids}) Let $k$ be a field. \hfill
\begin{enumerate}
  \item The $T$-fixed $k$-points of $\Ter^g_{\vec{c}}$ are in bijection with the tuples of numerical monoids $(M_1, \ldots, M_m)$ such that $c(M_i) \leq c_i$ for each $i$ and $\sum_i g(M_i) = g$.
  \item Each orbit-closure of $T$'s action contains at least one $T$-invariant point.
  \item There is a $k$-point of $\mathcal{Z}_{(k_d)}$ if and only if there is such a tuple of
  numerical monoids $M_1, \ldots, M_m$ such that
  \[
    k_d = \# \{ i \mid d \in M_i \text{ and } d < c_i \}
  \]
  for all $d \geq 1$.
\end{enumerate}
\end{theorem}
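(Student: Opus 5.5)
Throughout, work over a field $k$. Write $A = A^+_{\vec c}\otimes k = \bigoplus_i t_i k[t_i]/(t_i^{c_i})$, and let $T=(\GG_m)^m$ act by scaling $t_i$ by $\lambda_i$. Here a $k$-point $B$ of $\Ter^g_{\vec c}$ is a unital subalgebra of $k\times A$ (the unit adjoined) of codimension $g$, equivalently a corank-$g$ subalgebra of $A^+_{\vec c}$ in the sense of the definition.

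\textbf{Part (1).} The plan is to identify $T$-fixed subspaces with sums of weight spaces. The weight of $t_i^d$ is the character $\lambda_i^d$, and these characters are pairwise distinct for distinct $(i,d)$. Over an infinite field, a $T$-stable subspace is therefore spanned by monomials $t_i^d$. Over a finite field, I would instead argue scheme-theoretically: the fixed locus of the grassmannian is the union of the products of grassmannians of the weight spaces, and here all weight spaces are one-dimensional. So a fixed point is $B = k\cdot 1\oplus \mathrm{span}\{t_i^d : d\in S_i\}$ for some subsets $S_i\subseteq\{1,\dots,c_i-1\}$.

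Closure under multiplication is equivalent to each $M_i := \{0\}\cup S_i\cup [c_i,\infty)$ being a submonoid of $\NN$, because $t_it_j=0$ for $i\neq j$. Each $M_i$ is then a numerical monoid with $c(M_i)\le c_i$, and the codimension of $B$ is $\sum_i \#(\NN - M_i) = \sum_i g(M_i)$. Conversely, any tuple of numerical monoids with $c(M_i)\le c_i$ and $\sum_i g(M_i)=g$ gives such a $B$. This establishes the bijection.

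\textbf{Part (2).} Given a $k$-point $B$, I would choose a one-parameter subgroup $\gamma(\lambda)=(\lambda^{w_1},\dots,\lambda^{w_m})$ with weights $w_i>0$ generic, so that the integers $w_i d$ are pairwise distinct for $1\le d<c_i$. Since $\Ter^g_{\vec c}$ is closed in a projective grassmannian, the limit $\lim_{\lambda\to 0}\gamma(\lambda)B$ exists in the orbit closure.

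This limit can be computed as the initial subspace of $B$ for the total order on monomials given by $w$-weight. Concretely, take the span of the lowest-weight terms of the elements of $B$. This is the limit of the Plücker point, via Gaussian elimination into echelon form with respect to that order. The limit is spanned by monomials, so it is $T$-fixed.

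One must still check that the limit lies in $\Ter^g_{\vec c}$. This is automatic by closedness, but it can also be seen directly: initial forms are multiplicative on monomials, and $\mathrm{in}(fg)=\mathrm{in}(f)\mathrm{in}(g)$ whenever the product of the initial forms is nonzero. I expect this to be the main technical point, namely ensuring the limit is computed correctly over a possibly finite field and is genuinely $T$-fixed rather than fixed only by $\gamma$. Genericity of the $w_i$ handles this, because then $\gamma$ and $T$ have the same weight-space decomposition on $A$.

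\textbf{Part (3).} The strata $\mathcal Z_{(k_d)}$ are $T$-stable, since the $T$-action preserves each power $\mm^d$. For the "only if" direction, I would choose weights $w_i=1+\epsilon_i$ with $\epsilon_i$ small and generic. The order is then refined by total degree, so the initial subspace $\mathrm{in}(B)$ has $\dim(\mathrm{in}(B)\cap\mm^d/\mathrm{in}(B)\cap\mm^{d+1}) = k_d$. This holds because taking initial forms with respect to a degree-compatible order preserves the associated graded dimensions of the $\mm$-adic filtration.

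By Part (1), $\mathrm{in}(B)$ corresponds to monoids $M_i$, and its degree-$d$ piece is spanned by those $t_i^d$ with $d\in M_i$ and $d<c_i$. This gives the stated formula for $k_d$. For the "if" direction, the monomial subalgebra attached to the tuple is itself a $k$-point of $\mathcal Z_{(k_d)}$.
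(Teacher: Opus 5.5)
Your proposal is correct, but for parts (2) and (3) it takes a different route from the paper.

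\textbf{The paper's route.} The paper degenerates in several steps, $B'=\lim_{\gamma_m}\cdots\lim_{\gamma_1}\lim_{\gamma}B$: first along the total-degree cocharacter, then along each coordinate cocharacter $\gamma_i$ in turn. It then invokes Lemma~\ref{lem:gamma_limits}(ii), which says a $\gamma_2$-limit of a $\gamma_1$-graded algebra stays $\gamma_1$-graded with the same $\gamma_1$-vanishing sequence. This shows $B'$ is multigraded, hence $T$-fixed, and has the same vanishing sequence as $B$.

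\textbf{Your route.} You take a single limit along one generic cocharacter. Its weights must separate all monomials $t_i^d$, and its weight order must refine total degree. Note that $w_i=1+\epsilon_i$ has to be scaled to integers to define a cocharacter; for example $w_i=N+i$ with $N>m\max_j c_j$ satisfies both conditions. The limit is then the initial subspace $\mathrm{in}_w(B)$, which is automatically a monomial subalgebra. Preservation of the vanishing sequence reduces to the identity $\mathrm{in}_w(B)\cap\mm^d=\mathrm{in}_w(B\cap\mm^d)$. You assert this without proof, but it is one line: if $u=\mathrm{in}_w(b)$ has degree $\ge d$, then every term of $b$ has $w$-weight at least that of $u$, hence total degree $\ge d$, so $b\in\mm^d$.

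\textbf{Comparison.} Your version is a self-contained Gr\"obner-style argument. It also shows that the $T$-fixed point already lies in the closure of a single $\GG_m$-orbit. The paper's version needs no genericity choices, and it reuses the limit lemma and the coordinate cocharacters $\gamma_i$ that also drive Lemma~\ref{lem:limit_decomposable}.

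For part (1) the two arguments are essentially the same, since being graded for every $\gamma_i$ is exactly the weight-space decomposition. Your remark that ``$T$-fixed'' must be read scheme-theoretically over finite fields makes explicit something the paper leaves implicit.
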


\bigskip

Territories are known to be connected by a theorem of Ishii \cite{ishii_moduli_subrings}, which was used in \cite{bozlee_connectedness} to show connectedness of $\UU_{g,n}$.
In Section \ref{sec:aut_action} we use the action of the connected component of the identity of the automorphism group of $A^+_{\vec{c}}$ to prove that all irreducible components of $\Ter^g_{\vec{c}}$ contain a point corresponding to a smoothable singularity.

\begin{theorem} \label{thm:orbits_contain_smoothable_intro}[cf. Theorem \ref{thm:aut_orbit_contains_partition_sing}]
Let $k$ be a field of characteristic 0 and let $G_{\vec{c}}$ be the group of algebra automorphisms of $A_{\vec{c}}^+$. The closure of any orbit of $G^{\circ}_{\vec{c}} \times \Spec k$ acting on $\Ter^{g}_{\vec{c}} \times \Spec k$ contains a point corresponding to a smoothable singularity.
\end{theorem}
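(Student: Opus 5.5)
We have to show that the closure of every $G^{\circ}_{\vec{c}}$-orbit in $\Ter^g_{\vec{c}} \times \Spec k$ contains a smoothable singularity. The plan is to degenerate an arbitrary $k$-point $B$ in stages, using one-parameter subgroups of $G^\circ_{\vec{c}}$, until we reach a singularity known to be smoothable. It is enough to treat $k$ algebraically closed, since orbit closures commute with base field extension.

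First we use the torus $T = (\GG_m)^m \subseteq G^\circ_{\vec{c}}$, which acts by rescaling each $t_i$. By Theorem \ref{thm:fixed_points_are_monoids_intro}(2), the $T$-orbit closure of $B$, and hence its $G^\circ_{\vec{c}}$-orbit closure, contains a $T$-fixed point. By Theorem \ref{thm:fixed_points_are_monoids_intro}(1), this point is a transverse union of monomial branch singularities $k[[t_i^{M_i}]]$ with $\sum_i g(M_i) = g$. Orbit closures are $G^\circ_{\vec{c}}$-stable, so we may replace $B$ by this monomial point. It would then suffice to know that each such transverse union is smoothable, but monomial curves need not be smoothable (cf.\ Buchweitz's criterion in Figure \ref{fig:smoothability}). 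So the torus alone is not enough and we must use the unipotent part of $G^\circ_{\vec{c}}$.

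The unipotent part consists of automorphisms $t_i \mapsto t_i + a_{i,2}t_i^2 + \cdots$ together with automorphisms mixing branches, $t_i \mapsto t_i + \sum_{j\ne i} b_{ij} t_j + \cdots$. These exist because $A^+_{\vec{c}}$ has $t_it_j = 0$; in characteristic $0$ the group is connected, generated by exponentials of derivations. The goal is to reach the ``partition singularity'' named in Theorem \ref{thm:aut_orbit_contains_partition_sing}. I expect this to be a singularity determined by the multiplicity data, for example one whose branches, after regrouping the $m$ branches into blocks, are smooth or ordinary-cusp-like and meet as lines in general position, or a transverse union of such. These are smoothable by classical results: ordinary $m$-fold points with linearly independent tangents and $g = 0$ are smoothable, and for the remaining $(g,m)$ one can use Stevens' small-genus smoothability \cite{stevens_la_rabida} or deformation to decomposable unions via the $\vee$ map.

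The concrete steps are as follows. (i) Conjugate a one-parameter subgroup of $T$ by a generic unipotent element $u$, which yields a new subgroup $\lambda' = u\lambda u^{-1}$. Taking $\lim_{\lambda'}$ of the monomial point amounts to taking the initial subalgebra of $u\cdot B$ with respect to a new weighting, and this is still a point of the orbit closure. (ii) Show that a generic $u$ forces the initial subalgebra to contain elements $t_i + t_j$-type combinations, that is, to glue branches to first order. (iii) Iterate, decreasing an invariant such as $\sum_i g(M_i)$ spread over branches or the lexicographic vanishing sequence of $\mathcal{Z}_{(k_d)}$, until we land on the partition singularity.

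The main obstacle is step (ii)/(iii): controlling the limit so that the genus is redistributed in a controlled way and the process terminates at an explicitly smoothable singularity. This requires characteristic $0$, because exponentials of nilpotent derivations such as $t\partial_t$-shifts must lie in $G^\circ$ and act generically. My first attempt will be to do the computation on a single branch with a monoid $M$ and the derivation $t^{2}\partial_t$, showing that the limit is the monoid generated by the smallest possible elements. I would then handle several branches with the mixing derivations $t_j\partial_{t_i}$.
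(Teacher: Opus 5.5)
Your first step matches the paper: by Theorem \ref{thm:fixed_points_are_monoids}, the torus degenerates $B$ to a $T$-fixed point, which is a join of monomial branches $k[M_i]$. After that, however, the argument is left as ``the main obstacle,'' and the route you sketch for (ii)--(iii) points the wrong way. The target of Theorem \ref{thm:aut_orbit_contains_partition_sing} is a \emph{partition subalgebra} $k\cdot 1+\mathrm{Span}\{t_i^d \mid d_i\le d<c_i\}$. This is a transverse union of the unibranch monomial singularities with semigroups $\{0\}\cup\{d_i,d_i+1,\ldots\}$, which are smoothable by \cite[Lemma 3.2]{bozlee_connectedness}. No gluing of branches is involved, and Stevens' small-genus results cannot substitute for this when $g$ is general. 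Your proposed tools also fail as stated. If $\phi(t_i)=t_i+bt_j+\cdots$ and $\phi(t_j)=t_j+\cdots$, then the $\mm^2/\mm^3$-component of $\phi(t_i)\phi(t_j)=\phi(t_it_j)=0$ is $bt_j^2$. So such branch-mixing automorphisms exist only when $c_j\le 2$; the relation $t_it_j=0$ obstructs them rather than producing them. Finally, $\sum_i g(M_i)=g$ is constant on $\Ter^g_{\vec c}$, so it cannot serve as your decreasing invariant.

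The missing ingredient is Lemma \ref{lem:one_aut_limit}, which concerns the explicit family $\phi_a\colon t_i\mapsto t_i+a^{-1}t_i^2$ in $G^\circ_{\vec c}$.
\begin{enumerate}
\item Take a monomial $B$ with exponents $j_1<\cdots<j_s$ on some branch. Expand each $(t+a^{-1}t^2)^{j_\ell}$ and rescale each row by its top coefficient.
\item Clear repeated trailing columns from the bottom row upward. Only entries of equal $a$-degree are ever combined, so all entries stay monomial in $a$ and the lowest term of each row survives.
\item Setting $a=0$ gives distinct exponents $r_\ell\ge j_\ell$ on that branch. Closure under multiplication ($2j_\ell$ is either $\ge c_i$ or a later exponent) forces $r_\ell\ge j_{\ell+1}-1$ and $r_s=c_i-1$. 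Hence some exponent strictly increases unless $B$ is already a partition subalgebra.
\end{enumerate}
Characteristic $0$ enters through the nonvanishing of the binomial coefficients in $(t+a^{-1}t^2)^{j}$; compare Remark \ref{rem:why_char_0}. The number of exponents on each branch is preserved, so each branch keeps its genus $g(M_i)$ and no redistribution is needed. Since exponents are bounded by $c_i-1$, iterating terminates at the partition subalgebra with $d_i=g(M_i)+1$.

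Your step (i) is actually close to this mechanism. We have $\phi_a=\tau_a^{-1}\circ\psi\circ\tau_a$, where $\psi\colon t_i\mapsto t_i+t_i^2$ and $\tau_a\colon t_i\mapsto at_i$. So for $T$-fixed $B$, $\lim_{a\to0}\phi_a(B)=\lim_{-\gamma}\psi(B)$ with $\gamma$ the standard grading. That is a branch-preserving unipotent element followed by a torus limit that keeps \emph{top}-degree terms. What you need is to iterate this to push exponents up on each branch, not to glue branches. Your closing single-branch computation with $t^2\partial_t$ is the right instinct if it is aimed at that.
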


Our Main Theorem \ref{thm:close_connectedness_intro} follows. Remark \ref{rem:why_char_0} explains why our approach is limited to characteristic 0: in positive characteristic, there are subalgebras whose orbit closures do not contain a subalgebra of the desired kind.

\subsection{Acknowledgments}
The author is grateful to Leo Herr and Dave Swinarski for many conversations related to this work. The author thanks Jan Stevens for corrections to Figure \ref{fig:smoothability} and Piotr Oszer for useful comments. The author would also like to thank Christopher Guevara for permission to present some of the results of his thesis in this paper.

\subsection{Tool and computational resource disclosure}

The author used large language models for copy-editing, proofreading, and assistance with the preparation of Figure~\ref{fig:smoothability}. All mathematical content, arguments, and remaining errors are the responsibility of the author.

\section{Basic theory}
\label{sec:basic_theory}
\subsection{Definition and basic properties of territories}

A territory is a moduli scheme parametrizing the subalgebras of fixed corank in a fixed algebra $\mathscr{A}$, also of finite rank. More precisely, it has the following moduli functor.

\begin{definition} (See \cite[Definition 1]{ishii_moduli_subrings}, \cite[Section 2]{bozlee_guevara_smyth})
Let $S$ be a scheme and $\mathscr{A}$ a finite locally free sheaf of $\OO_S$-algebras of rank $n$. Given an $S$-scheme $f: T \to S$, a \emphbf{family of subalgebras of $\mathscr{A}$ of corank $g$} on $T$ is a quasi-coherent $\OO_T$-subalgebra $\mathscr{B}$ of $f^*\mathscr{A}$ such that the quotient $\OO_T$-module $f^*\mathscr{A} / \mathscr{B}$ is locally free of rank $g$.

The \emphbf{$g$-territory of $\mathscr{A}$} is the functor $\Ter^g_{\mathscr{A}} : (\Sch/S)^{op} \to \Set$ defined by:
\begin{enumerate}
    \item If $f : T \to S$ is an $S$-scheme, $\Ter^g_{\mathscr{A}}(T \to S)$ is the set of families of subalgebras of $\mathscr{A}$ of corank $g$ on $T$.
    \item If $h : T \to T'$ is a morphism of $S$-schemes, then $\Ter^g_{\mathscr{A}}(h) : \Ter^g_{\mathscr{A}}(T') \to \Ter^g_{\mathscr{A}}(T)$ is defined by taking a family of subalgebras to its pullback.
\end{enumerate}
\end{definition}

Observe that a point of $\Ter^g_{\mathscr{A}}$ is also a point of $\Grass(n - g, \mathscr{A})$. Our hypotheses ensure that territories are representable by a closed subscheme of the grassmannian.

\begin{lemma} \label{lem:ter_representability} (\cite[Theorem 2.5]{bozlee_guevara_smyth}, see also \cite[Theorem 1]{ishii_moduli_subrings})
Let $S$ be a scheme and $\mathscr{A}$ a finite locally free sheaf of $\OO_S$-algebras of rank $n$. The functor $\Ter^g_{\mathscr{A}}$ is represented by a closed $S$-subscheme of $\Grass(n-g, \mathscr{A})$, which by common abuse of notation we also call $\Ter^g_{\mathscr{A}}$. In particular $\Ter^g_\mathscr{A}$ is locally projective over $S$.
\end{lemma}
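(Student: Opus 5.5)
The plan is to realize $\Ter^g_{\mathscr{A}}$ as the locus in the grassmannian $G = \Grass(n-g,\mathscr{A})$ where the universal rank $n-g$ subbundle $\mathscr{S} \subseteq \mathscr{A}_G$ is closed under multiplication and contains the unit. Representability is local on $S$, and closed subfunctors glue, so we may assume $S = \Spec R$ is affine and $\mathscr{A}$ is free of rank $n$. The grassmannian $G$ represents the functor of rank $n-g$ subbundles with locally free quotient of rank $g$, and it is projective over $S$ via the Plücker embedding. Write $\mathscr{Q} = \mathscr{A}_G/\mathscr{S}$ for the universal quotient, locally free of rank $g$.

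Next, express both conditions as the vanishing of maps between locally free sheaves on $G$:
\begin{enumerate}
\item the composite $\OO_G \xrightarrow{1} \mathscr{A}_G \to \mathscr{Q}$;
\item the composite $\mathscr{S}\otimes\mathscr{S} \to \mathscr{A}_G\otimes\mathscr{A}_G \xrightarrow{\mu} \mathscr{A}_G \to \mathscr{Q}$.
\end{enumerate}
Each is a homomorphism of finite locally free sheaves, equivalently a global section of a locally free sheaf $\mathscr{Q}$ or $\mathscr{H}om(\mathscr{S}\otimes\mathscr{S},\mathscr{Q})$. The standard fact I will use is that the zero locus of a section $\sigma$ of a finite locally free sheaf $\mathscr{E}$ is a closed subscheme $Z$ with the universal property that $T \to G$ factors through $Z$ if and only if $\sigma$ pulls back to zero on $T$. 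Locally $\mathscr{E}$ is trivialized, so $Z$ is cut out by the coordinate functions of $\sigma$, and the pullback of $\sigma$ is computed coordinatewise. Let $Z$ be the intersection of the two zero loci.

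Then check that $Z$ represents $\Ter^g_{\mathscr{A}}$. Take a morphism $f\colon T \to S$ and a family $\mathscr{B} \subseteq f^*\mathscr{A}$ with locally free quotient of rank $g$. Since that quotient is flat, $\mathscr{B}$ is locally free of rank $n-g$, and formation of $\mathscr{B}$ and of the quotient commutes with arbitrary base change. So $\mathscr{B}$ gives a $T$-point of $G$ with $\mathscr{S}_T = \mathscr{B}$ and $\mathscr{Q}_T = f^*\mathscr{A}/\mathscr{B}$. Pullback of the two maps above is just the corresponding composite for $\mathscr{B}$. Hence $\mathscr{B}$ is a subalgebra (unital and multiplicatively closed) exactly when both maps vanish on $T$, that is, exactly when $T \to G$ factors through $Z$. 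This bijection is natural in $T$, so $Z$ represents the functor, and it is a closed subscheme of $G$. Local projectivity over $S$ follows because $Z$ is closed in $G$ and $G$ is projective over each affine open where $\mathscr{A}$ is free.

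The only subtle point is the compatibility with base change: we need that "$\mathscr{B}$ is closed under multiplication" is tested by the vanishing of a map into the quotient, and that this vanishing is stable under pullback. The flatness of the quotient $\mathscr{Q}$ ensures that $\mathscr{S}_T \to \mathscr{A}_T$ stays injective with cokernel $\mathscr{Q}_T$. That is exactly what makes the tensor-product description of the two maps commute with base change. I expect no further obstacle.
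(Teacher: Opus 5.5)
Your proposal is correct and takes essentially the same route as the paper, which cites \cite[Theorem 2.5]{bozlee_guevara_smyth}. In Section~\ref{ssec:ter_eqns} the paper identifies $\Ter^g_{\mathscr{A}}$ with the zero locus of the composite $\mm_{\mathscr{B}}\otimes\mm_{\mathscr{B}}\to\mm_{\mathscr{A}}|_G/\mm_{\mathscr{B}}$ of locally free sheaves on a grassmannian, which is your map (2); your unit condition (1) replaces the paper's device of working with subbundles of $\mm_{\mathscr{A}}$ when $\mathscr{A}=\OO_S\cdot 1\oplus\mm_{\mathscr{A}}$, where containing $1$ is automatic.
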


For our applications to curves, we will be primarily interested in the following territories.

\begin{definition}
Let $\vec{c} = (c_1,\ldots, c_m)$ be a tuple of positive integers. Let
\[
  A_{\vec{c}} = \prod_{i = 1}^m \ZZ[t_i]/(t_i^{c_i}).
\]
Denote the subalgebra in which ``constants are equal'' by
\[
  A^+_{\vec{c}} = \{ (f_1(t_1), \ldots, f_m(t_m)) \in \ZZ[t_i]/(t_i^{c_i}) \mid f_i(0) = f_j(0) \text{ for all }i,j \},
\]
and let
\[
  \Ter^g_{\vec{c}} \coloneqq \Ter^g_{A^+_{\vec{c}}}.
\]
\end{definition}

As explained in \cite[Section 2]{bozlee_connectedness}, the points of $\Ter^g_{\vec{c}}$ parametrize reduced curve singularities with $m$ branches, genus $g$, and conductor ``bounded by $\vec{c}$,'' together with the data of a normalization morphism. In brief, suppose $\tilde{C}$ is a smooth curve over an algebraically closed field $k$ and $Z$ is a subscheme of $\tilde{C}$ consisting of points
$q_1, \ldots, q_m$ with multiplicities $c_1, \ldots, c_m$. We may choose an isomorphism $\Spec A_{\vec{c}} \otimes k \cong Z$. Then
for any $k$-point of $\Ter^g_{\vec{c}}$, which is a subalgebra $B \subseteq A^+_{\vec{c}} \otimes k$, we may take the pushout
\[
\begin{tikzcd}
  \Spec A_{\vec{c}} \otimes k \ar[r] \ar[d] \ar[dr, phantom, "\lrcorner", very near end] & \tilde{C} \ar[d, "\nu"] \\
   \Spec B \ar[r] & C,
\end{tikzcd}
\]
where the left vertical map is induced by the inclusion $B \subseteq A_{\vec{c}} \otimes k$. The result is a curve $C$ with
normalization $\nu : \tilde{C} \to C$ and a singularity at the common image $x$ of $q_1, \ldots, q_m$. The isomorphism class of the complete local ring of $x$ is determined by $B$ (and does not depend on the surrounding curve $\tilde{C}$). This construction is extended to families in \cite[Section 4]{bozlee_guevara_smyth}.

In the other direction, given a reduced curve singularity $x \in C$, one computes the normalization $\nu : \tilde{C} \to C$ of $C$ at $x$, then takes the subring $\nu^\sharp(\mathscr{O}_C)/\Cond_{\tilde{C}/C}$ of $\mathscr{O}_{\tilde{C}}/\Cond_{\tilde{C}/C}$, where $\Cond_{\tilde{C}/C} = \Ann_{\OO_C}(\nu_*\mathscr{O}_{\tilde{C}}/\mathscr{O}_C)$ is the conductor ideal. The latter is isomorphic to $A_{\vec{c}}$ for some choice of $\vec{c}$ and $\nu^\sharp(\mathscr{O}_C)/\Cond_{\tilde{C}/C}$ factors through $A^+_{\vec{c}}$, so we get a point of $\Ter^g_{\vec{c}}$, unique up to the choice of isomorphism of $\mathscr{O}_{\tilde{C}}/\Cond_{\tilde{C}/C}$ with $A_{\vec{c}}$.

\medskip

Standard functoriality properties of grassmannians induce the following properties of territories.

\begin{lemma} \label{lem:ter_basics} (See \cite[Section 2]{bozlee_guevara_smyth})
Suppose that $\mathscr{A}$ is a locally free $\OO_S$-algebra of finite rank $n$.
\begin{enumerate}
  \item (Isomorphisms) If $\phi : \mathscr{A} \to \mathscr{A}'$ is an isomorphism of $\OO_S$-algebras, then there is an induced isomorphism of territories
  $\phi_* : \Ter^g_{\mathscr{A}} \to \Ter^g_{\mathscr{A}'}$ taking $\mathscr{B} \mapsto \phi(\mathscr{B})$.
  \item (Base change) If $f : T \to S$ is a morphism of schemes, then there is a natural isomorphism $\Ter^g_{\mathscr{A}} \times_S T \cong \Ter^g_{\mathscr{A}|_T}$.
  \item (Subalgebras) If $\mathscr{A}' \in \Ter^{g'}_{\mathscr{A}}(T \to S)$, then $\Ter^{g}_{\mathscr{A}'}$ is a closed subscheme of $\Ter^{g + g'}_{\mathscr{A}} \times_S T$ via $\mathscr{B} \mapsto \mathscr{B}$.
  \item (Quotients) If $\pi : \mathscr{A} \to \mathscr{A}'$ is a surjective morphism such that $\mathscr{A}'$ is locally free, then there is a closed immersion $\Ter^g_{\mathscr{A}'} \to \Ter^g_{\mathscr{A}}$ given by $\mathscr{B} \mapsto \pi^{-1}(\mathscr{B})$.
\end{enumerate}
\end{lemma}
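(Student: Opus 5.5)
We will deduce each of the four statements in Lemma \ref{lem:ter_basics} from the representability result, Lemma \ref{lem:ter_representability}. Since every territory is a subfunctor of a grassmannian functor, the work reduces to checking that the indicated assignments on $T$-points are well defined, meaning they preserve the subalgebra and the local freeness of the quotient of the correct rank, and that they are natural in $T$. Yoneda's lemma then supplies the morphisms. Closed immersions will come from comparing the functors directly.

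\emph{Isomorphisms and base change.} For (1), a family $\mathscr{B} \subseteq f^*\mathscr{A}$ maps to $f^*\phi(\mathscr{B}) \subseteq f^*\mathscr{A}'$. This image is a subalgebra because $\phi$ is an algebra map, and $f^*\phi$ induces an isomorphism of the quotients, so local freeness of rank $g$ is preserved. The assignment is compatible with pullback, and $\phi^{-1}$ provides the inverse. For (2), observe that a $T'$-scheme over $T$ is the same thing as an $S$-scheme together with a factorization through $T$. The pullback of $\mathscr{A}$ along $T' \to S$ agrees with the pullback of $\mathscr{A}|_T$ along $T' \to T$. Hence the functors of points of $\Ter^g_{\mathscr{A}}\times_S T$ and $\Ter^g_{\mathscr{A}|_T}$ on $\Sch/T$ coincide.

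\emph{Subalgebras.} Let $h : T' \to T$. A $T'$-point of $\Ter^g_{\mathscr{A}'}$ is a subalgebra $\mathscr{B} \subseteq h^*\mathscr{A}'$ with $h^*\mathscr{A}'/\mathscr{B}$ locally free of rank $g$. Pulling back $\mathscr{A}'$ preserves injectivity into $h^*f^*\mathscr{A}$, because the cokernel $f^*\mathscr{A}/\mathscr{A}'$ is locally free, so the quotient sequence stays exact under pullback. We then have the exact sequence $0 \to h^*\mathscr{A}'/\mathscr{B} \to h^*f^*\mathscr{A}/\mathscr{B} \to h^*(f^*\mathscr{A}/\mathscr{A}') \to 0$. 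The outer terms are locally free of ranks $g$ and $g'$, so the middle term is locally free of rank $g+g'$. This defines a monomorphism $\Ter^g_{\mathscr{A}'} \to \Ter^{g+g'}_{\mathscr{A}}\times_S T$. To show it is a closed immersion, we identify its image with the locus where $\mathscr{B}$ factors through $h^*\mathscr{A}'$. That locus is the vanishing locus of the composite $\mathscr{B} \to h^*f^*\mathscr{A} \to h^*(f^*\mathscr{A}/\mathscr{A}')$, a map of locally free sheaves, and hence closed. On this locus the quotient $h^*\mathscr{A}'/\mathscr{B}$ is the kernel of a surjection of locally free sheaves, so it is locally free of rank $g$.

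\emph{Quotients.} For (4), the kernel $\mathscr{K}$ of $\pi$ is locally free because $\mathscr{A}'$ is, and the sequence $0\to\mathscr{K}\to\mathscr{A}\to\mathscr{A}'\to 0$ stays exact after any pullback. Given $\mathscr{B}\subseteq f^*\mathscr{A}'$, the preimage $\pi^{-1}(\mathscr{B})$ is a subalgebra, and $f^*\mathscr{A}/\pi^{-1}(\mathscr{B}) \cong f^*\mathscr{A}'/\mathscr{B}$, which is locally free of rank $g$. Forming preimages commutes with pullback by this exactness, so the assignment is natural. Its image is the set of families containing $\mathscr{K}$, and the inverse on the image is $\mathscr{B}\mapsto \mathscr{B}/\mathscr{K}$. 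The condition of containing $\mathscr{K}$ is closed: it is the vanishing of $f^*\mathscr{K}\to f^*\mathscr{A}/\mathscr{B}$.

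The main obstacle is the closedness in (3) and (4), together with verifying local freeness of the quotients on the closed loci. The remaining steps are formal once the exact sequences above are in place.
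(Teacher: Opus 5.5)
Your proposal is correct and follows essentially the route the paper intends. The paper gives no separate proof: it cites \cite[Section 2]{bozlee_guevara_smyth} and says these properties are induced by standard functoriality of Grassmannians. Your functor-of-points verification is exactly how that functoriality restricts to the closed subschemes $\Ter^g_{\mathscr{A}}$: local freeness of the quotients via the exact sequences, and closedness via vanishing loci of maps of locally free sheaves.
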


If $\vec{c} = (c_1, \ldots, c_m)$ and $\vec{d} = (d_1, \ldots, d_m)$ are vectors of integers such that $c_i \leq d_i$ for all $i$,
then there is a natural quotient map $A^+_{\vec{d}} \to A^+_{\vec{c}}$. By (iv) above, this induces a closed immersion $\Ter^g_{\vec{c}} \to \Ter^g_{\vec{d}}$. This suggests that we should consider the subscheme parametrizing subalgebras only associated to $\vec{c}$ and not to smaller tuples of integers; we give it the following notation. Because of our convention that the $c_i$ are positive, we make a separate definition for the unique point of $\Ter^0_{1}$: it corresponds to a smooth point, which has conductor multiplicity 0.

\begin{definition} \label{def:ter_equals_c}
For $(g,m) \neq (0,1)$, we set $\Ter^g_{=\vec{c}}$ to be the open subscheme of $\Ter^g_{\vec{c}}$ defined by
\[
  \Ter^g_{=\vec{c}} \coloneqq \Ter^g_{\vec{c}} - (\Ter^g_{(c_1 - 1, c_2, \ldots, c_m)} \cup \Ter^g_{(c_1, c_2 - 1, \ldots, c_m)} \cup \cdots \cup \Ter^g_{(c_1,\ldots, c_m - 1)}),
\]
treating any territories whose subscript contains a 0 as empty.

For $(g,m) = (0,1)$, we set $\Ter^0_{=0}$ to be the unique point of $\Ter^0_{1}$.
\end{definition}

In \cite{bozlee_guevara_smyth}, the scheme $\Ter^g_{=\vec{c}}$ is denoted by $\Ter_{\mathcal{S}}(g, \vec{c})$ and called the ``territory of curve singularities of genus $g$ and conductances $\vec{c}$.'' The main result of \cite{bozlee_guevara_smyth} decomposes the moduli stack of equinormalized curves into fiber bundles whose fibers are infinitesimal thickenings of products of the schemes $\Ter^g_{=\vec{c}}$. The geometry of the larger scheme $\Ter^g_{\vec{c}}$ is somewhat easier to study (being a closed rather than locally closed subscheme of $\Grass(c - m + 1 - g, c - m + 1)$)
and still useful (as our connectedness and smoothability results imply) so it will be our focus here.

\medskip

There are simple bounds on the possible conductances of singularities. There is also a characterization of reduced Gorenstein curve singularities in terms of conductances. We will pay special attention to Gorenstein singularities, since they have been an important class of singularities in the study of moduli of curves, see for example \cite{smyth_mstable, battistella_mstable}. 

\begin{lemma} \label{lem:c_bounds} (\cite[Chapter VIII, Proposition 1.16]{altman_kleiman})
The scheme $\Ter^g_{=\vec{c}}$ is nonempty only if
\[
  g + m - 1 < \sum_{i = 1}^m c_i \leq 2(g + m - 1).
\]
A $k$-point of $\Ter^g_{=\vec{c}}$ corresponds to a Gorenstein curve singularity if and only if
\[
  \sum_{i = 1}^{m} c_i = 2(g + m - 1).
\]
\end{lemma}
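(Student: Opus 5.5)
The plan is to translate the statement into invariants of the local ring and then use the standard conductor inequalities. Let $k$ be algebraically closed and take a $k$-point $B$ of $\Ter^g_{=\vec{c}}$. Via the pushout construction above, $B$ gives a reduced curve singularity $x \in C$ with complete local ring $\OO$ and normalization $\tilde{\OO} \cong \prod_{i=1}^m k[[t_i]]$. Write $\mathfrak{c} = \Cond_{\tilde{C}/C}$ for the conductor ideal.

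\textbf{Step 1: the conductor equals $\vec{c}$.} The defining condition of $\Ter^g_{=\vec{c}}$ should mean exactly that $\mathfrak{c} = \prod_i t_i^{c_i}k[[t_i]]$. Since $B$ does not lie in any $\Ter^g_{\vec{c}-e_j}$, we cannot have $\prod_i t_i^{c_i'} k[[t_i]] \subseteq \OO$ with $\vec{c}' < \vec{c}$. From this, $\dim_k \tilde{\OO}/\mathfrak{c} = c := \sum c_i$. The genus is $\delta = \dim_k \tilde{\OO}/\OO = g$, because $A^+_{\vec{c}}$ has corank $m-1$ in $A_{\vec{c}}$ and $B$ has corank $g$ in $A^+_{\vec{c}}$. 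Strictly, that makes $\delta = g + m - 1$. So the paper's ``genus'' $g$ is $\delta - m + 1$, and we set $\delta := g + m - 1$. Similarly $\dim \OO/\mathfrak{c} = \dim B = c - m + 1 - g = c - \delta$. The inequalities to prove then read $\delta < c \le 2\delta$.

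\textbf{Step 2: the inequalities.} For the upper bound $c \le 2\delta$, equivalently $\dim \OO/\mathfrak{c} \le \dim \tilde{\OO}/\OO$, I would use the usual argument. Choose a $k$-basis of $\OO/\mathfrak{c}$ with distinct valuation vectors, or more cleanly, use that multiplication gives an injection $\OO/\mathfrak{c} \hookrightarrow \mathrm{Hom}_k(\tilde{\OO}/\OO, k)$. The cleanest route is duality with the dualizing module $\omega$. One has $\dim \tilde{\OO}/\mathfrak{c} = \dim \OO/\mathfrak{c} + \delta$ and $\ell(\omega/\OO)$-type identities, and Gorenstein holds iff $\ell(\tilde{\OO}/\mathfrak{c}) = 2\ell(\tilde{\OO}/\OO)$. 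This is exactly Altman--Kleiman VIII.1.16, which I would cite, or reprove via $\mathfrak{c} = \mathrm{Hom}_\OO(\tilde{\OO},\OO)$ and $\ell(\OO/\mathfrak{c}) \le \ell(\tilde{\OO}/\OO)$ by local duality.

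For the strict lower bound $c > \delta$: we need $\OO/\mathfrak{c} \ne 0$, that is, $\mathfrak{c} \ne \OO$. This holds because $\OO$ is local and not equal to $\tilde{\OO}$ unless we are in the case $(g,m) = (0,1)$, which is excluded. Hence $\dim B \ge 1$, which gives $c - \delta \ge 1$.

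\textbf{Step 3: the Gorenstein criterion.} This is the equality case $\ell(\OO/\mathfrak{c}) = \ell(\tilde{\OO}/\OO)$ characterizing Gorenstein singularities, again from Altman--Kleiman (Gorenstein iff $\omega \cong \OO$). Equality gives $c = 2\delta = 2(g+m-1)$.

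\textbf{Main obstacle.} The expected hard part is the duality inequality $\ell(\OO/\mathfrak{c}) \le \delta$ together with its equality case, which I would not reprove but import from the cited reference. The remaining work is bookkeeping: checking that nonemptiness of $\Ter^g_{=\vec{c}}$ really pins the conductor to $\vec{c}$, and keeping track of the shift between the corank $g$ and $\delta$.
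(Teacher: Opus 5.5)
Your proposal is correct and matches the paper, which gives no argument of its own and simply cites Altman--Kleiman VIII.1.16. Your Step 1 bookkeeping is exactly what is needed to apply that citation: on $\Ter^g_{=\vec c}$ the conductor is exactly $\prod_i t_i^{c_i}k[[t_i]]$, $\delta = g+m-1$, and $\dim_k \OO/\mathfrak{c} = \dim_k B = c-\delta$. The lower bound also follows immediately from $1 \in B$. Your aside that multiplication alone gives an injection $\OO/\mathfrak{c} \hookrightarrow \mathrm{Hom}_k(\tilde{\OO}/\OO,k)$ is not justified as stated, since it needs a suitably chosen functional, which is the duality input. You do not rely on it, so it does no harm.
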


In particular, once conductances are sufficiently large (namely $c_i \geq 2(g+m-1)$ for all $i$), the closed immersions $\Ter^g_{\vec{c}} \to \Ter^g_{\vec{d}}$ obtained by increasing conductances are only nilpotent thickenings. Given a reduced curve singularity of genus $g$ with $m$ branches, the quantity $\delta = g + m - 1$ is called its \emphbf{delta invariant}.

\subsection{Equations for territories} \label{ssec:ter_eqns}

Let us describe the equations for $\Ter^g_{\mathscr{A}}$ as a closed subscheme of a grassmannian in the case that $\mathscr{A}$ is a ``sheaf of local algebras.'' More precisely, suppose that $\mathscr{A}$ is a sheaf of locally free $\OO_S$-algebras of rank $n + 1$ and admits a decomposition $\mathscr{A} = \OO_S \cdot 1 \oplus \mm_{\mathscr{A}}$, where $\mm_{\mathscr{A}}$ is a sheaf of ideals of $\mathscr{A}$, locally free of rank $n$. (For example, we may take $A^+_{\vec{c}} = \ZZ \cdot 1 \oplus \mm$ where $\mm = (t_1, \ldots, t_m)$.) A sub-$\OO_S$-algebra of $\mathscr{A}$ must contain $\OO_S \cdot 1$, so by standard arguments, a sub-$\OO_S$-algebra $\mathscr{B}$ of $\mathscr{A}$ is
equivalent to a subsheaf $\mm_{\mathscr{B}}$ of $\mm_{\mathscr{A}}$ which is closed under multiplication. (Take $\mathscr{B} = \OO_S \cdot 1 \oplus \mm_{\mathscr{B}}$.) Then $\Ter^g_{\mathscr{A}}$ may be identified with the locus of multiplicatively closed subspaces in $G = \Grass(n - g, \mm_{\mathscr{A}})$. To compute this locus, we consider the composite morphism
\[
  \mu : \mm_{\mathscr{B}} \otimes_{\OO_G} \mm_{\mathscr{B}} \to \mm_{\mathscr{A}}|_G \to \mm_{\mathscr{A}}|_G/\mm_{\mathscr{B}}
\]
where $\mm_{\mathscr{B}}$ denotes the universal subbundle of $\mm_{\mathscr{A}}$, the first morphism is induced by the multiplication of $\mm_{\mathscr{A}}$, and the last
morphism is the quotient map. The vanishing of $\mu$ is well-defined since it is a morphism of locally free $\OO_G$-modules. The condition that $\mathscr{B}$ be closed under multiplication is precisely the vanishing of $\mu$, so $\Ter^g_{\mathscr{A}} = V(\mu)$ inside of $G$.

In particular,
\begin{lemma} \label{lem:territory_in_grass_max_ideal}
$\Ter^g_{\vec{c}}$ is a closed subscheme of $\Grass(c - m - g, \mm_{A^+_{\vec{c}}})$.
\end{lemma}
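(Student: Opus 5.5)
The plan is to apply the general discussion just above to $\mathscr{A} = A^+_{\vec{c}}$ over $S = \Spec \ZZ$. First I will check that $A^+_{\vec{c}}$ has the required shape. It is a free $\ZZ$-module of rank $n+1$, with basis $1$ together with $t_i^j$ for $1 \leq i \leq m$ and $1 \leq j \leq c_i - 1$. So $n + 1 = 1 + \sum_i (c_i - 1) = c - m + 1$. The ideal $\mm = (t_1, \ldots, t_m)$ is spanned by the $t_i^j$ with $j \geq 1$, so it is free of rank $c - m$, and we have the splitting $A^+_{\vec{c}} = \ZZ \cdot 1 \oplus \mm$ as $\ZZ$-modules. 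The same holds after any base change $T \to \Spec \ZZ$.

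Next I will identify the functor. Let $\mathscr{B}$ be a family of subalgebras of corank $g$ on $T$. Because $\mathscr{B}$ contains $1$, we get $\mathscr{B} = \OO_T \cdot 1 \oplus (\mathscr{B} \cap \mm_T)$. To see this, write $b = \epsilon(b) \cdot 1 + (b - \epsilon(b))$, where $\epsilon : A^+ \to \OO_T$ is the augmentation with kernel $\mm$. Then $\mm_{\mathscr{B}} := \mathscr{B} \cap \mm_T$ is the kernel of the surjection $\epsilon|_{\mathscr{B}} : \mathscr{B} \to \OO_T$. It is multiplicatively closed, and the quotient $\mm_T / \mm_{\mathscr{B}} \cong A^+_T / \mathscr{B}$ is locally free of rank $g$. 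Conversely, a subsheaf $\mm' \subseteq \mm_T$ with locally free quotient of rank $g$ that is closed under multiplication gives the subalgebra $\OO_T \oplus \mm'$, again of corank $g$. These two constructions are mutually inverse and compatible with pullback. Hence $\Ter^g_{\vec{c}}$ is the subfunctor of $\Grass(c - m - g, \mm)$ consisting of multiplicatively closed subbundles.

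Finally I will show this subfunctor is closed. On $G = \Grass(c-m-g, \mm)$ with universal subbundle $\mm_{\mathscr{B}}$, form the map $\mu : \mm_{\mathscr{B}} \otimes \mm_{\mathscr{B}} \to \mm|_G / \mm_{\mathscr{B}}$ as above. This is a map of locally free sheaves. Its vanishing locus $V(\mu)$ is therefore closed, cut out locally by the matrix entries of $\mu$. A morphism $T \to G$ factors through $V(\mu)$ exactly when the pulled-back $\mu$ vanishes, which is exactly multiplicative closure of the pulled-back subbundle. This representability is the point needing the most care. The formation of $\mu$ commutes with base change because quotients of locally free sheaves by locally free subbundles are preserved under pullback, so there is no real obstacle. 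Combined with Lemma \ref{lem:ter_representability}, this identifies $\Ter^g_{\vec{c}}$ with $V(\mu) \subseteq \Grass(c-m-g, \mm_{A^+_{\vec{c}}})$.
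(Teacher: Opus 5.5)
Your proposal is correct and is essentially the paper's argument. The paper likewise uses the splitting $\mathscr{A} = \OO_S \cdot 1 \oplus \mm_{\mathscr{A}}$ to identify corank-$g$ subalgebras with multiplicatively closed subbundles of $\mm_{\mathscr{A}}$, and then cuts out $\Ter^g_{\vec{c}}$ inside $\Grass(c-m-g, \mm_{A^+_{\vec{c}}})$ as the vanishing locus $V(\mu)$ of $\mu : \mm_{\mathscr{B}} \otimes \mm_{\mathscr{B}} \to \mm_{\mathscr{A}}|_G/\mm_{\mathscr{B}}$; you only spell out details the paper calls standard (the rank count, the augmentation splitting, base-change compatibility of $\mu$), and your final appeal to Lemma~\ref{lem:ter_representability} is harmless but unnecessary, since $V(\mu)$ already represents the functor.
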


We may write equations for the territory $\Ter^g_{\mathscr{A}}$ more explicitly in the standard local coordinate charts of $G$. By passing to a Zariski open cover of $S$, we may assume $\mm_{\mathscr{A}}$ is free with basis $e_1, \ldots, e_n$. Let $[n] = \{1, \ldots, n\}$. The standard coordinate charts of $G$ associated to
the basis $e_1, \ldots, e_n$ are indexed by the subsets
$I \subseteq [n]$ of size $|I| = n - g$. Explicitly, if $I = \{ i_1 < \cdots < i_{n - g} \}$ we set $U_I = \Spec \OO_S[x_{i,j}]_{i \in I, j \in [n] - I}$,
with open immersion $U_I \to G$ given by taking a tuple $(x_{i,j})_{i \in I, j \in [n] - I}$ to the subsheaf of $\mm_{\mathscr{A}}|_{U_I}$ spanned by the $\OO_{U_I}$-basis
\begin{align*}
  f_{i_1} &= e_{i_1} + \sum_{j \in [n] - I} x_{i_1,j}e_j \\
  &\vdots \\
  f_{i_{n - g}} &= e_{i_{n - g}} + \sum_{j \in [n] - I} x_{i_{n - g},j}e_j.
\end{align*}

Now, on $U_I$, both the domain and codomain of $\mu$ trivialize: $\mm_{\mathscr{B}} \otimes \mm_{\mathscr{B}}|_{U_I}$ has basis $\{ f_\alpha \otimes f_\beta \}_{\alpha,\beta \in I}$ and $(\mm_{\mathscr{A}}|_G /\mm_{\mathscr{B}})|_{U_I}$ has basis $(\ol{e_k})_{k \in [n] - I}$. So, to compute the restriction of $\Ter^g_{\mathscr{A}} = V(\mu)$ on $U_I$:
\begin{enumerate}
  \item For each $\alpha, \beta \in I$, we compute the products $f_\alpha f_\beta$ in $\mm_{\mathscr{A}}|_{U_I}$ in terms of the basis $e_1, \ldots, e_n$:
  \[
    f_\alpha f_\beta = \sum_{k \in [n]} a_{\alpha,\beta,k}(\vec{x}) e_k,
  \]
  where each $a_{\alpha,\beta,k}(\vec{x}) \in \OO_S[x_{i,j}]_{i \in I, j \in [n] - I}$ is a polynomial of degree at most two.
  \item Next, for each $\alpha, \beta \in I$, we subtract off $\sum_{i \in I} a_{\alpha,\beta,i}(\vec{x}) f_i$. This cancels out the coefficients of the $e_k$ with $k \in I$ without changing the image of $f_\alpha f_\beta$ modulo $\mm_{\mathscr{B}}$:
  \begin{align*}
    f_\alpha f_\beta &\equiv \sum_{k \in [n]} a_{\alpha,\beta,k}(\vec{x}) e_k - \sum_{i \in I} a_{\alpha,\beta,i}(\vec{x}) f_i \pmod{\mm_{\mathscr{B}}} \\
      &\equiv \sum_{k \in [n] - I} \left(a_{\alpha,\beta,k}(\vec{x}) - \sum_{i \in I} a_{\alpha,\beta,i}(\vec{x}) x_{i,k}\right) e_k \pmod{\mm_{\mathscr{B}}}.
  \end{align*}
  Thus,
  \[
     \mu(f_\alpha \otimes f_\beta) = \sum_{k \in [n] - I} \left(a_{\alpha,\beta,k}(\vec{x}) - \sum_{i \in I} a_{\alpha,\beta,i}(\vec{x}) x_{i,k}\right) \ol{e_k}.
  \]
  Observe that the coefficients are typically non-homogeneous polynomials of degree at most three.
  \item We conclude
  \begin{align*}
    \Ter^g_{\mathscr{A}}|_{U_I} &\cong V(\mu)|_{U_I} \\
      &= \Spec \frac{\OO_S[x_{i,j}]_{i \in I, j \in [n] - I}}{(a_{\alpha,\beta,k}(\vec{x}) - \sum_{i \in I} a_{\alpha,\beta,i}(\vec{x}) x_{i,k} : \alpha,\beta \in I, k \in [n] - I)}.
  \end{align*}
\end{enumerate}

We find that $\Ter^g_{\vec{c}}$ may have many irreducible components, these irreducible components may be generically
nonreduced, and they may themselves be singular.

\begin{example} (Territories may be reducible.) (See also \cite[Remark 3.14]{bozlee_guevara_smyth}.)
The chart of $\Ter^3_{(6)} \otimes \Spec \QQ$ parametrizing subalgebras with basis:
\begin{align*}
\begin{matrix}
e_1 &= &t^3 &     &+ \, a_1t &+ \,a_2t^2 &+ \,a_5t^5 \\
e_2 &= &    & t^4 &+ \, b_1t &+ \,b_2t^2 &+ \,b_5t^5
\end{matrix}
\end{align*}
has two irreducible components $Z_1$ and $Z_2$. The first irreducible component is
\[
  Z_1 = V(a_1,a_2,b_1,b_2) \cong \AA^2,
\]
parametrizing subalgebras of $\QQ[t]/t^6$ of the form
\[
  \QQ \cdot 1 + \QQ \cdot (t^3 + a_5t^5) + \QQ \cdot (t^4 + b_5t^5).
\]
The territory is generically nonreduced along $Z_1$. The second irreducible component is
\[
  Z_2 = V(a_1, b_1, b_2, a_2b_5 - 2) \cong \AA^1 \times \GG_m,
\]
parametrizing subalgebras of the form
\[
  \QQ \cdot 1 + \QQ \cdot (a_2t^2 + t^3 + a_5t^5) + \QQ \cdot \left(t^4 + \frac{2}{a_2}t^5\right).
\]
\end{example}

\begin{example}
The chart of $\Ter^2_{(2,3,3)} \otimes \Spec \QQ$ indexed by
\[
\begin{matrix}
  e_1 &= &t_1 &           &           &+ \, a_{2}t_2 &+ \, a_{3}t_3 \\
  e_2 &= &    &t_2^2 &           &+ \, b_{2}t_2 &+ \, b_{3}t_3 \\
  e_3 &= &    &           &t_3^2 &+ \, c_{2}t_2 &+ \, c_{3}t_3 
\end{matrix}
\]
has a single irreducible component of dimension 3 with a singular point at the origin. The reduction of this chart has equations
\begin{align*}
  b_{3}c_{2} - b_{2}c_{3} &= 0 \\
  b_{2}c_{2} + c_{3}^2 &= 0 \\
  a_{3}c_{2} - a_{2}c_{3} &= 0 \\
  b_{2}^2 + b_{3}c_{3} &= 0 \\
  a_{3}b_{2} - a_{2}b_{3} &= 0 \\
  a_{2}b_{2} + a_{3}c_{3} &= 0
\end{align*}
\end{example}

\subsection{Based territories}

It is often convenient to work with the additional data of a basis for $\mm_\mathscr{B}$. The moduli scheme of codimension $g$ subalgebras of $\mathscr{A}$ with such a basis will yield a smooth cover of $\Ter^g_{\mathscr{A}}$ by a quasi-affine scheme. Let us state the desired moduli functor.

\begin{definition}
Let $S$ be a scheme and $\mathscr{A}$ be a sheaf of $\OO_S$-algebras of the form $\OO_S \cdot 1 \oplus \mm_{\mathscr{A}}$, where $\mm_{\mathscr{A}}$ is a sheaf of ideals of $\mathscr{A}$, locally free of rank $n$. The \emphbf{based $g$-territory} of $\mathscr{A}$ is the functor $\BTer^g_{\mathscr{A}} : (\Sch/S)^{op} \to \Set$ defined as follows:
\begin{enumerate}
    \item For each $S$-scheme $T \to S$, $\BTer^g_{\mathscr{A}}(T \to S)$ consists of the set of ordered pairs of a family $\mathscr{B}$ of subalgebras of $\mathscr{A}|_T$ of corank $g$ and an isomorphism of $\OO_T$-modules $\OO_T^{\oplus (n - g)} \to \mm_\mathscr{B} = \mm_{\mathscr{A}} \cap \mathscr{B}$.
    \item If $g : T \to T'$ is a morphism of $S$-schemes, then $\BTer^g_{\mathscr{A}}(g) : \BTer^g_{\mathscr{A}}(T') \to \BTer^g_{\mathscr{A}}(T)$ is defined by pullback.
\end{enumerate}
\end{definition}

Recall that there is a cover
\[
  \pi : U(n-g,n) \to \Grass(n - g, n)
\]
where $U(n-g,n)$ is the scheme parametrizing $n \times (n - g)$ matrices of full rank, and $\pi$ is the morphism taking a given matrix to the subspace spanned by its columns.
When $\mm_\mathscr{A}$ is free of rank $n$, $\BTer^g_{\mathscr{A}}$ is represented by the restriction of $U(n - g,n)$ to $\Ter^g_{\mathscr{A}}$:
\[
  \BTer^g_{\mathscr{A}} \coloneqq \Ter^g_{\mathscr{A}} \times_{\Grass(n - g, n)} U(n - g, n).
\]
Thus, $\BTer^g_{\mathscr{A}}$ is a quasi-affine scheme and the natural map $\BTer^g_{\mathscr{A}} \to \Ter^g_{\mathscr{A}}$ is a left $\GL_{n-g}$-torsor. In particular, it is a smooth cover of $\Ter^g_{\mathscr{A}}$ with fiber dimension $(n-g)^2$, so any questions about $\Ter^g_{\mathscr{A}}$ which are local in the smooth topology may be studied on $\BTer^g_{\mathscr{A}}$. We will find $\BTer^g_{\vec{c}}$ conceptually useful when computing limits of one-parameter families inside of $\Ter^g_{\vec{c}}$.

\subsubsection*{Equations for based territories}

Working affine locally, we may assume $\mm_{\mathscr{A}}$ has $\OO_S$-basis $e_1, \ldots, e_n$.
Write $\AA^{n,n-g} = \Spec \ZZ[x_{i,j} : i \in [n], j \in [n-g]]$ for the scheme of $n \times (n-g)$ matrices whose entry in the $i$th row and $j$th column is $x_{i,j}$. Given a subset $I \subseteq [n]$ of size $n - g$, write $\det_I$ for the determinant of the square submatrix with rows indexed by $I$. We have $U(n-g,n) = \bigcup_{I} D(\det_I) \subseteq \AA^{n,n-g}$, where the union is over such subsets.
Let us write out equations for $\BTer^g_{\mathscr{A}}$ as a closed subscheme of $U(n - g, n)$. 

Introduce the universal column vectors
\[
  f_\alpha = \sum_{i=1}^n x_{i,\alpha} e_i \qquad \alpha \in [n-g].
\]
Using the basis of $e_1, \ldots, e_n$, the products of universal column vectors can be written uniquely as
\[
  f_\alpha f_\beta = \sum_{i=1}^{n} a_{\alpha,\beta,i}(\vec{x}) e_i
\]
where $a_{\alpha,\beta,i}(\vec{x}) \in \OO_S[x_{i,j} : i \in [n], j \in [n-g] ]$ for all $\alpha, \beta \in [n-g], i \in [n]$. Observe that each $a_{\alpha,\beta,i}(\vec{x})$ either vanishes or is homogeneous of degree 2.

Closure under multiplication is the condition
\[
  f_\alpha f_\beta \in \mathrm{Span}_{\OO_S}\{\,f_1,\dots,f_{n-g}\} \qquad \forall\,\alpha,\beta,
\]
which, in $U(n-g,n)$, is equivalent to the vanishing of all $(n-g+1)\times(n-g+1)$ minors of the augmented $n\times(n-g+1)$ matrix
\[
  M_{\alpha,\beta} = \begin{bmatrix}
      x_{1,1}  & \cdots & x_{1, n - g} & a_{\alpha, \beta, 1}(\vec{x}) \\
      \vdots & & \vdots & \vdots \\
      x_{n,1} & \cdots & x_{n, n-g} & a_{\alpha, \beta, n}(\vec{x})
  \end{bmatrix}.
\]
We conclude that $\BTer^g_{\mathscr{A}}$ is the vanishing in $\AA^{n,n-g}$ of the maximal minors
of each $M_{\alpha,\beta}$ intersected with $U(n-g,n)$.

\begin{remark}
Each minor of $M_{\alpha,\beta}$ is homogeneous of degree $n-g+2$ and the determinants $\det_I$ defining $U(n-g,n)$ are homogeneous of degree $n-g$,
so it makes sense to take the projectivization of $\BTer^g_{\mathscr{A}}$. This would be a proper scheme intermediate between $\BTer^g_{\mathscr{A}}$ and $\Ter^g_{\mathscr{A}}$.
\end{remark}

In parallel to territories, we make the following abbreviations.

\begin{definition}
Given a non-negative integer $g$ and a tuple of positive integers $\vec{c}$, let $\BTer^g_{\vec{c}} \coloneqq \BTer^g_{A^+_{\vec{c}}}$ and let $\BTer^g_{=\vec{c}} = \BTer^g_{\vec{c}} \times_{\Ter^g_{\vec{c}}} \Ter^g_{=\vec{c}}$.
\end{definition}

\begin{example}[Unibranch territories with $\dim \mm_{\mathscr{B}} = 1$] \label{ex:ter_g_g_plus_2}
Suppose $m = 1$ and $g = c - 2$. Then a basis of $\mm_{\mathscr{B}}$ consists of just one element,
\[
  a_1t + a_2t^2 + \cdots + a_{g+1}t^{g+1}.
\]
The linear independence condition simplifies to the condition that at least one $a_i$ is nonzero.

The equations for $\BTer^{g}_{g+2}$ are then the $2\times 2$ minors of
\[
  \begin{bmatrix}
    0 & a_1^2 & 2a_1a_2 & \cdots & \sum_{i=1}^{\ell-1} a_ia_{\ell-i} & \cdots & \sum_{i=1}^{g} a_ia_{g +1- i} \\
    a_1 & a_2 & a_3 & \cdots & a_\ell & \cdots & a_{g+1}
  \end{bmatrix}^T.
\]

Since $\GL_1 = \GG_m$, $\Ter^{g}_{g+2}$ is the projectivization of the corresponding variety.

Working with the equations shows that $\Ter^g_{g+2}$ may be nonreduced, for example, when $g = 2$ we have
\[
  \Ter^2_4 \cong \Proj \ZZ[a_1,a_2,a_3]/(a_1^3, 2a_1^2a_2, a_1^2a_3 - 2a_1a_2^2).
\]
Observe that $a_1 \neq 0$, but $a_1^3 = 0$.

The reduction of $\Ter^g_{g+2}$ is simply a projective space: $(\Ter^g_{g+2})_{red} \cong \PP^{\lfloor (g + 2) / 2\rfloor  - 1}$, and it is a pleasant exercise to derive this from the determinantal equations. This reduction turns out to be equal to the \emphbf{spine} of the territory, discussed below. Intuitively, the only way for $a_1t + \cdots + a_{g+1}t^{g+1}$ to be a scalar multiple of its square (as required for closure under multiplication) is for its square to be zero.
\end{example}

For the remainder of the paper, we will let $\vec{c} = (c_1,\ldots,c_m)$ be a vector of positive integers, $g$ be a non-negative integer, and any territory that we refer to will be one of the form $\Ter^g_{\vec{c}}$ unless specified otherwise.

\section{The spine}
\label{sec:spine}

In this section we will show that each territory contains a large simple piece isomorphic to a grassmannian, which we call the spine. We will use spines to bound the dimensions of territories and give improved bounds on existence of non-smoothable singularities.

\subsection{Construction and dimension bounds}

We observe first that the territory of an algebra is equal to a
grassmannian in the following situation, since every equation for containment of products holds trivially.

\begin{lemma} \label{lem:sq_zero_territory}
Suppose $\mathscr{A} = \OO_S \oplus \mm$, where $\mm$ is a square-zero subsheaf of ideals of $\mathscr{A}$, locally free of rank $n$. Then $\Ter^\delta_{\mathscr{A}} \cong \Grass(n - \delta, \mm)$.
\end{lemma}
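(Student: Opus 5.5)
The plan is to compare the two moduli functors directly and show that the closed subscheme $\Ter^\delta_{\mathscr{A}} \subseteq \Grass(n-\delta, \mm)$ is all of the grassmannian. By the discussion in Section~\ref{ssec:ter_eqns}, a family of subalgebras of $\mathscr{A}|_T$ of corank $\delta$ is the same as a subsheaf $\mm_{\mathscr{B}} \subseteq \mm|_T$ that is closed under multiplication, with $\mm|_T/\mm_{\mathscr{B}}$ locally free of rank $\delta$. The assignment sends $\mm_{\mathscr{B}}$ to $\mathscr{B} = \OO_T \cdot 1 \oplus \mm_{\mathscr{B}}$; the quotients $\mathscr{A}|_T/\mathscr{B}$ and $\mm|_T/\mm_{\mathscr{B}}$ agree, so the corank condition matches. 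Since $\Ter^\delta_{\mathscr{A}} = V(\mu)$ inside $G = \Grass(n-\delta,\mm)$, it suffices to show that $\mu$ vanishes identically on $G$.

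The key step is to factor $\mu$. By definition it is the composite
\[
  \mm_{\mathscr{B}} \otimes \mm_{\mathscr{B}} \to \mm|_G \otimes \mm|_G \to \mm|_G \to \mm|_G/\mm_{\mathscr{B}},
\]
where the middle arrow is multiplication restricted to $\mm$. That arrow is pulled back from the multiplication map $\mm \otimes \mm \to \mm$ on $S$, which is zero because $\mm^2 = 0$. So $\mu = 0$, its vanishing locus is all of $G$, and the closed immersion $\Ter^\delta_{\mathscr{A}} \to G$ is an isomorphism.

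Equivalently, on $T$-points, any rank $n-\delta$ subbundle $\mm_{\mathscr{B}} \subseteq \mm|_T$ with locally free quotient is automatically multiplicatively closed, since $\mm_{\mathscr{B}} \cdot \mm_{\mathscr{B}} \subseteq (\mm|_T)^2 = 0 \subseteq \mm_{\mathscr{B}}$. The two functors therefore coincide on all $S$-schemes $T$, and the isomorphism follows by Yoneda.

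There is no real obstacle. The only point needing care is that the pullback of the square-zero ideal stays square zero after base change, which holds because multiplication commutes with pullback.
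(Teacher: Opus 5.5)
Your proposal is correct and is essentially the paper's argument. The paper works Zariski locally with a basis of $\mm$ and observes that all the structure constants $a_{\alpha,\beta,k}$ in the equations for $V(\mu)$ vanish because $\mm^2 = 0$; this is the same as your observation that $\mu$ factors through the zero multiplication map $\mm \otimes \mm \to \mm$, with your version simply avoiding the choice of basis.
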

\begin{proof}
We may work Zariski locally so that $\mm$ possesses a basis. Then, since $\mm$ is square-zero, all of the $a_{\alpha,\beta,k}$'s in the equations of Section \ref{ssec:ter_eqns} vanish, so all of the equations hold trivially for any point of $\Grass(n - \delta, \mm)$.
\end{proof}

We take the spine of a territory to consist of those subalgebras factoring through the maximal subalgebra of $A^+_{\vec{c}}$ of this form, using Lemma \ref{lem:ter_basics} part (iii).

\begin{definition}
Let $A^{spine}_{\vec{c}}$ be the subring $\ZZ + (t_i^{\lceil c_i / 2 \rceil} : i = 1,\ldots,m)$
of $A^+_{\vec{c}}$. Let $\delta$ be its corank in $A^+_{\vec{c}}$.
The \emphbf{spine} $\Spine(g, \vec{c})$ of $\Ter^{g}_{\vec{c}}$ is the subscheme $\Ter^{g-\delta}_{A^{spine}_{\vec{c}}}$ of
$\Ter^g_{\vec{c}}$.
\end{definition}

 Consequently, we have the following.

\begin{theorem} \label{thm:spine_dim}
For any nonempty territory $\Ter^g_{\vec{c}}$ and any algebraically closed field $k$,
\[
  (c - m - g)\left(g + m - \frac{c}{2} - \frac{p}{2} \right) \leq \dim \Ter^g_{\vec{c}} \times \Spec k \leq (c - m - g) g.
\]
\end{theorem}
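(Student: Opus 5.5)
The plan is to sandwich $\Ter^g_{\vec{c}} \times \Spec k$ between two grassmannians: the ambient grassmannian of Lemma \ref{lem:territory_in_grass_max_ideal} gives the upper bound, and the spine gives the lower bound. Throughout I use the base change property, Lemma \ref{lem:ter_basics}(ii), to identify $\Ter^g_{\vec{c}} \times \Spec k$ with the territory of $A^+_{\vec{c}} \otimes k$. The same applies to the spine and to the grassmannians, so all dimension statements may be computed over $k$.

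\emph{Upper bound.} The maximal ideal $\mm$ of $A^+_{\vec{c}}$ is free with basis $t_i^j$ for $1 \le i \le m$ and $1 \le j < c_i$, so it has rank $\sum_i (c_i - 1) = c - m$. By Lemma \ref{lem:territory_in_grass_max_ideal}, $\Ter^g_{\vec{c}}$ is a closed subscheme of $\Grass(c - m - g, \mm)$. This grassmannian has dimension $(c-m-g)\bigl((c-m) - (c-m-g)\bigr) = (c-m-g)g$, which gives the upper bound. Nonemptiness of the territory forces $0 \le c-m-g$, a sign fact used below.

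\emph{Lower bound.} The maximal ideal $\mm^{spine}$ of $A^{spine}_{\vec{c}}$ has basis $t_i^j$ for $\lceil c_i/2\rceil \le j < c_i$, so it has rank $N := \sum_i \lfloor c_i/2 \rfloor$. It is square-zero. Indeed, $t_i^j t_i^{j'} = 0$ because $j + j' \ge 2\lceil c_i/2\rceil \ge c_i$, and $t_i t_j = 0$ for $i \ne j$. Hence the corank of $A^{spine}_{\vec{c}}$ in $A^+_{\vec{c}}$ is $\delta = (c-m) - N$. Lemma \ref{lem:sq_zero_territory} then gives
\[
\Spine(g,\vec{c}) = \Ter^{g-\delta}_{A^{spine}_{\vec{c}}} \cong \Grass\bigl(N - (g-\delta), \mm^{spine}\bigr) = \Grass(c-m-g, N).
\]
By Lemma \ref{lem:ter_basics}(iii) this is a closed subscheme of $\Ter^g_{\vec{c}}$. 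Writing $N = (c-p)/2$, its dimension (when nonempty) is
\[
(c-m-g)\bigl(N - (c-m-g)\bigr) = (c-m-g)\Bigl(g + m - \frac{c}{2} - \frac{p}{2}\Bigr),
\]
so $\dim \Ter^g_{\vec{c}} \times \Spec k$ is at least this quantity.

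\emph{Degenerate cases.} The only point requiring care is when the spine is empty, i.e.\ $c-m-g > N$; this includes the case $g < \delta$, where the subscript $g-\delta$ would be negative. In that case the second factor $N - (c-m-g)$ is negative while $c-m-g \ge 0$, so the claimed lower bound is $\le 0$. It therefore holds trivially, since $\Ter^g_{\vec{c}} \times \Spec k$ is nonempty and has dimension $\ge 0$. With that case dispatched, the argument is just the two dimension counts above.
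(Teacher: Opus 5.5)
Your proposal is correct and follows the paper's proof: the upper bound comes from the ambient $\Grass(c-m-g,\mm)$ of Lemma \ref{lem:territory_in_grass_max_ideal}, and the lower bound from the spine $\cong \Grass\bigl(c-m-g, \tfrac{c-p}{2}\bigr)$ via Lemma \ref{lem:sq_zero_territory}. Your explicit treatment of the empty-spine case, where the lower bound is nonpositive and holds trivially because the territory is nonempty and hence $c-m-g \ge 0$, is a small gain in care. The paper's proof only says ``when nonempty'' and leaves that case implicit.
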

\begin{proof}
Observe that the maximal ideal of $A^{spine}_{\vec{c}}$ has rank $\sum_{i = 1}^m \lfloor \frac{c_i}{2} \rfloor = \frac{c - p}{2}$, where $p$ is the number of $c_i$'s that are odd.
By Lemma \ref{lem:sq_zero_territory}, the spine of $\Ter^{g}_{\vec{c}}$ is then a closed subscheme isomorphic to $\Grass\left(c - m - g, \frac{c-p}{2}\right)$. It is nonempty if and only if
\[
  0 \leq c - m - g \leq \frac{c - p}{2},
\]
or equivalently,
\[
  g + m \leq c \leq 2(g + m) - p.
\]
When nonempty, the relative dimension of the spine over $\Spec \ZZ$ is
\[
  \rdim \Spine(g, \vec{c}) = (c - m - g)\left(g + m - \frac{c}{2} - \frac{p}{2} \right).
\]
This gives a lower bound on the relative dimension of $\Ter^g_{\vec{c}}$.
On the other hand, $\Ter^g_{\vec{c}}$ is contained in $\Grass(c - m - g, \mm_{A^+_{\vec{c}}})$ in light of Lemma \ref{lem:territory_in_grass_max_ideal}, which has relative dimension $(c - m - g)g$.
\end{proof}

\begin{example} \label{ex:all_twos}
For any $g$ and $m$, $\Ter^g_{(2,\ldots,2)}$ is equal to its spine, $\Grass(m - g, m)$, which has dimension $(m - g)g$.
\end{example}

\begin{example}
 The spine of $\Ter^3_{(3,3,3,3)}$ is empty. A point of the spine must be contained in $k + (t_1^2, \ldots, t_4^2)$. But $k + (t_1^2,\ldots, t_4^2)$ has corank 4 in $A^+_{3,3,3,3}$, so it cannot contain any subalgebras of corank 3. This does not imply that $\Ter^3_{(3,3,3,3)}$ or even $\Ter^3_{=(3,3,3,3)}$ is empty: a planar quadruple point yields a point of $\Ter^3_{=(3,3,3,3)}$, as planar quadruple points are Gorenstein singularities of genus 3 with branch conductances all equal to 3.
\end{example}

\begin{example}[Spines for Gorenstein singularities]
Suppose $g \geq 1$ and $c = 2(g+m-1)$. Let $\delta = g + m - 1$.
\begin{itemize}
  \item If all branch conductances are even, then $\lfloor c_i / 2 \rfloor = c_i / 2$ for all $i$, so
\begin{align*}
  \Spine(g, \vec{c}) &\cong \Grass\left(2\delta - (\delta - m + 1) - m, \frac{2\delta}{2}\right) \\
    &\cong \Grass(\delta - 1, \delta) \\
    &\cong \PP^{g+m-2}.
\end{align*}
The locus in the spine corresponding to Gorenstein singularities is open and non\-empty, consisting of the subalgebras $B$ not containing any of $t_1^{c_1 - 1}, \ldots, t_m^{c_m - 1}$.

  \item If exactly two branch conductances are odd, then $\lfloor c_i / 2 \rfloor = c_i / 2 - 1/2$ for two indices $i$. It follows that the spine consists of a point, since
\begin{align*}
  \Spine(g, \vec{c}) &\cong \Grass\left(2\delta - (\delta - m + 1) - m, \frac{2\delta}{2} - 1\right) \\
    &\cong \Grass(\delta - 1, \delta - 1)
\end{align*}
This point parametrizes a non-Gorenstein singularity.

  \item If more than two branch conductances are odd, then the spine is empty.
\end{itemize}
\end{example}

\subsubsection{The union of spines}

We recall that for any $\vec{c}, \vec{d} \in \ZZ_{\geq 1}^m$ with $c_i \leq d_i$ for all $i = 1,\ldots,m$, there is a surjection
$A^+_{\vec{d}} \to A^+_{\vec{c}}$ which induces a closed
immersion $\Ter^g_{\vec{c}} \to \Ter^g_{\vec{d}}$. Therefore $\Spine(g, \vec{c})$ naturally embeds into $\Ter^g_{\vec{d}}$. Thus, a given territory contains many subvarieties isomorphic to grassmannians. The intersections of these spines are also isomorphic to grassmannians, and dimension counts show that the various spines typically are not contained
in each other.

\begin{lemma}
Suppose that $\vec{c}^{(1)}, \ldots, \vec{c}^{(r)}$ are vectors of conductances with $\vec{c}^{(j)} \leq \vec{c}$ for all $j$. Let
\[
  c^{min}_i = \min \{ c_i^{(j)} : j = 1, \ldots, r \} \quad \text{and} \quad c^{max}_i = \max \{ c_i^{(j)} : j = 1,\ldots, r \}.
\]
If $c^{min}_i \geq \lceil c^{max}_i / 2 \rceil$ for all $i$, then the intersection in $\Ter^g_{\vec{c}}$ of the corresponding spines satisfies
\[
 \bigcap_{j = 1}^r \Spine(g, \vec{c^{(j)}}) \cong \Grass\left(c^{min} - g - m, \sum_{i = 1}^m \left(c_i^{min} - \left\lceil \frac{c^{max}_i}{2} \right\rceil \right) \right)
\]
where $c^{min} = \sum_{i = 1}^m c^{min}_i$.

If $c^{min}_i < \lceil c^{max}_i / 2 \rceil$ for some $i$, then the intersection of spines is empty instead.
\end{lemma}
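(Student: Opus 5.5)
The plan is to describe each spine, after it is embedded in $\Ter^g_{\vec{c}}$, as a set of subalgebras lying between two fixed ideals. Then I will intersect these descriptions and identify the result with a grassmannian using Lemma \ref{lem:sq_zero_territory}. Throughout I work functorially: I describe $T$-points for an arbitrary base $T$, so that the identification holds scheme-theoretically and not only on $k$-points.

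\emph{Step 1: describe each embedded spine.} Let $\pi_j : A^+_{\vec{c}} \to A^+_{\vec{c}^{(j)}}$ be the quotient map. By Lemma \ref{lem:ter_basics}(iv), the embedding $\Ter^g_{\vec{c}^{(j)}} \to \Ter^g_{\vec{c}}$ sends $\mathscr{B} \mapsto \pi_j^{-1}(\mathscr{B})$. Its image therefore consists of the families $\mathscr{B}$ of corank $g$ containing the ideal
\[
  K_j = \ker \pi_j = (t_i^{c^{(j)}_i} : i = 1,\ldots,m).
\]
Under this identification, $\Spine(g,\vec{c}^{(j)})$ corresponds to those $\mathscr{B}$ with
\[
  K_j \subseteq \mathscr{B} \subseteq U_j \coloneqq \pi_j^{-1}(A^{spine}_{\vec{c}^{(j)}}) = \ZZ + (t_i^{\lceil c^{(j)}_i/2 \rceil} : i = 1,\ldots,m).
\]
All of $K_j$ and $U_j$ are spanned by monomials, so they are direct summands of $A^+_{\vec{c}}$ compatibly with the monomial basis.

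\emph{Step 2: intersect.} A $T$-point lies in every spine exactly when $K_j \subseteq \mathscr{B} \subseteq U_j$ for all $j$. Because every module involved is spanned by a subset of the monomial basis, this is equivalent to
\[
  K \coloneqq \sum_j K_j \subseteq \mathscr{B} \subseteq U \coloneqq \bigcap_j U_j .
\]
Here $K = (t_i^{c^{min}_i})$ and $U = \ZZ + (t_i^{\lceil c^{max}_i/2\rceil})$.
\begin{itemize}
  \item If $c^{min}_i < \lceil c^{max}_i/2 \rceil$ for some $i$, then $t_i^{c^{min}_i}$ lies in $K$ but not in $U$. No $\mathscr{B}$ can satisfy both containments, so the intersection is empty.
  \item Otherwise $K \subseteq U$. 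Moreover $U_+ = (t_i^{\lceil c^{max}_i/2\rceil})$ satisfies $U_+^2 \subseteq (t_i^{c^{max}_i}) \subseteq K$, since $c^{max}_i \geq c^{min}_i$.
\end{itemize}

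\emph{Step 3: identify with a grassmannian.} In the nonempty case, $\mathscr{B} \mapsto \mathscr{B}/K$ identifies the intersection with $\Ter$ of the algebra $U/K = \ZZ \oplus U_+/K$ at the appropriate corank. Formally this is Lemma \ref{lem:ter_basics}(iii) and (iv) applied to $U \subseteq A^+_{\vec{c}}$ and to $U \to U/K$. The ideal $U_+/K$ is square-zero and free of rank $\sum_i (c^{min}_i - \lceil c^{max}_i/2\rceil)$. By Lemma \ref{lem:sq_zero_territory}, this territory is the grassmannian of subspaces of $U_+/K$ of rank $\operatorname{rank}(\mathscr{B} \cap \mm) - \operatorname{rank} K$. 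The rank of $\mathscr{B} \cap \mm$ is $c - m - g$ and the rank of $K$ is $\sum_i (c_i - c^{min}_i)$. Their difference is $c^{min} - m - g$, which matches the statement.

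The main obstacle is the scheme-theoretic bookkeeping in Steps 1 and 2. One must check that the closed subscheme $\Spine(g,\vec{c}^{(j)}) \subseteq \Ter^g_{\vec{c}}$ represents exactly the subfunctor cut out by the two containment conditions. One must also check that the scheme-theoretic intersection of these subfunctors is the subfunctor for $K \subseteq \mathscr{B} \subseteq U$. I will handle this by noting that each containment condition is a closed condition: it is the vanishing of a map of locally free sheaves, such as $K \to \mathscr{A}/\mathscr{B}$ or $\mathscr{B} \to \mathscr{A}/U$. I will then use the monomial direct-sum decompositions to see that the vanishing conditions for the $K_j$ together, and for the $U_j$ together, are equivalent to those for $K$ and $U$.
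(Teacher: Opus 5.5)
Your proposal is correct and follows the same route as the paper. The paper's proof simply notes that the intersection consists of the corank $g$ subalgebras contained in $\ZZ + (t_i^{\lceil c_i^{max}/2\rceil})$ and containing $(t_i^{c_i^{min}})$, and then finishes with a dimension count; your Steps 1--3 supply the functorial bookkeeping and the explicit reduction to Lemma \ref{lem:sq_zero_territory} that the paper leaves implicit.
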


\begin{proof}
The intersection consists of all corank $g$ algebras $B$ factoring through
\[
  \ZZ + (t_1^{\lceil c_1^{max}/2 \rceil}, \ldots, t_m^{\lceil c_m^{max}/2 \rceil})
\]
and containing the ideal
$(t_1^{c_1^{min}}, \ldots, t_m^{c_m^{min}})$. A dimension count completes the proof.
\end{proof}

\begin{example}
Consider the unibranch territory $\Ter^6_{12}$. It contains:
\[
\begin{tikzcd}[cramped,sep=small]
 \Spine(6,12) \cong \PP^5 & & \\
 \Spine(6,11)\cong \PP^4 \ar[u,hook] & & \\
 \Spine(6,11) \cap \Spine(6,10) \cong \PP^3 \ar[u,hook] \ar[r,hook] & \Spine(6,10) \cong \Grass(3,5) & \\
 & \Spine(6,9) \cong \Grass(2,4) \ar[u,hook] & \\
 & \Spine(6,9) \cap \Spine(6,8) \cong \PP^2 \ar[u,hook] \ar[r,hook] & \Spine(6,8) \cong \PP^3 \\
 & & \Spine(6,7) \cong pt. \ar[u,hook]
\end{tikzcd}
\]
\end{example}

Using these dimension counts, we find a simple sufficient condition for the existence of singularities with a given genus and profile of branch conductances.

\begin{theorem}
If $g \geq 0$ and $\vec{c} = (c_1, \ldots, c_m)$ is a vector of even positive integers with sum $c$ satisfying $g + m - 1 < c \leq 2(g + m - 1)$, and $k$ is any algebraically closed field, then $\Ter^g_{=\vec{c}}(k)$ is nonempty.
That is, there exists an $m$-branch reduced curve singularity over $k$ of genus $g$ with conductances exactly $\vec{c}$. Moreover, $\Ter^g_{=\vec{c}} \times \Spec k$ has at least dimension equal to that of $\Spine(g,\vec{c}) \times \Spec k$.
\end{theorem}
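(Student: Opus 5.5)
The plan is to find the required points inside the spine itself. I will show that $\Spine(g,\vec{c}) \cap \Ter^g_{=\vec{c}}$ is a nonempty open subscheme of $\Spine(g,\vec{c}) \times \Spec k$. Since this spine is an irreducible grassmannian, that subscheme is then dense in it. This gives both nonemptiness and the dimension bound at once.

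First, the spine is nonempty. Since all $c_i$ are even we have $p = 0$. By the proof of Theorem \ref{thm:spine_dim}, $\Spine(g,\vec{c}) \cong \Grass(r, N)$ with $r = c - m - g$ and $N = c/2$. Concretely, a point of the spine is a subalgebra $k \oplus V$, where $V$ is an $r$-dimensional subspace of
\[
W = \mathrm{Span}_k\{t_i^{j} : 1 \le i \le m,\ c_i/2 \le j \le c_i - 1\}.
\]
The hypothesis $g + m - 1 < c$ gives $r \geq 0$. The hypothesis $c \leq 2(g+m-1)$ gives $r \le c/2 - 1 < N$. So the spine is a nonempty grassmannian, and the inequality $r < N$ is strict; this strictness is the key point used below.

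Next, I identify which points of the spine lie in the complement of $\Ter^g_{=\vec{c}}$. The closed immersion $\Ter^g_{(c_1,\ldots,c_j-1,\ldots,c_m)} \to \Ter^g_{\vec{c}}$ comes from Lemma \ref{lem:ter_basics}(iv) and is given by $B \mapsto \pi^{-1}(B)$. The kernel of the quotient $\pi$ is spanned by $t_j^{c_j-1}$. Hence the image of this immersion is exactly the closed locus of subalgebras $B$ with $t_j^{c_j-1} \in B$. I will check this scheme-theoretically on the universal family, but only the set-theoretic statement is needed for nonemptiness and dimension. Because $c_j \ge 2$, the element $t_j^{c_j-1}$ is a nonzero vector of $W$: its exponent satisfies $c_j - 1 \ge c_j/2$. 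So inside the spine, the $j$-th removed locus is the Schubert variety $\{V \ni t_j^{c_j-1}\} \cong \Grass(r-1, N-1)$, which is empty when $r=0$. Its dimension is $(r-1)(N-r)$, which is strictly less than $r(N-r)$ because $N - r > 0$.

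Finally, I combine these. The spine minus the union of these $m$ proper closed subsets is a nonempty open subset of the irreducible variety $\Grass(r,N)_k$, so it is dense and has dimension $r(N-r) = \dim \Spine(g,\vec{c}) \times \Spec k$. This subset lies in $\Ter^g_{=\vec{c}} \times \Spec k$, which proves both claims. The nonemptiness then translates into the existence of a singularity with conductances exactly $\vec{c}$ via the pushout construction of Section \ref{sec:basic_theory}. The only genuinely delicate step is the identification of $\Ter^g_{(\ldots,c_j-1,\ldots)}$ with the locus where $t_j^{c_j-1} \in B$. Everything else is a dimension count that hinges on the strict inequality $r < N$, and that inequality is exactly where the upper bound $c \le 2(g+m-1)$ enters.
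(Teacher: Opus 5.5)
Your proposal is correct and follows the paper's proof. Both arguments show that a dense open subset of the spine lies in $\Ter^g_{=\vec{c}}$, because the locus where $t_j^{c_j-1}\in B$ has dimension $(c-1-g-m)(g+m-c/2)$, strictly less than that of the spine. You describe this locus as the Schubert variety $\Grass(r-1,N-1)$, while the paper describes it as $\Spine(g,(c_1,\ldots,c_j-1,\ldots,c_m))$. The only other difference is that you handle $r=0$ uniformly, whereas the paper treats $c=g+m$ as a separate case.
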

\begin{proof}
If $c = g + m$, then the spine is isomorphic to $\Grass(0, \frac{c}{2})$, a single point, whose corresponding singularity has exactly conductances $\vec{c}$. For the remainder of the proof, assume that $g + m < c \leq 2(g + m - 1)$.

The claim will follow if we show that a dense open subset of $\Spine(g,\vec{c}) \times \Spec k$ factors through $\Ter^g_{=\vec{c}} \times \Spec k$.
Observe that any point of $\Spine(g,\vec{c})$ factoring through $\Ter^g_{(c_1, \ldots, c_i - 1, \ldots, c_m)}$ also belongs to $\Spine(g,(c_1, \ldots, c_i - 1, \ldots, c_m))$. Thus it suffices to show that $\Spine(g, (c_1,\ldots, c_i - 1, \ldots, c_m))\times \Spec k$
has smaller dimension than $\Spine(g,\vec{c}) \times \Spec k$ for each $i = 1, \ldots, m$.
Observe that
\[
  \dim \Spine(g,\vec{c}) \times \Spec k = (c - g - m)(g + m - c/2),
\]
which is positive since $g + m < c \leq 2(g + m -1)$. Meanwhile, for each $i = 1, \ldots, m$, since $c_i - 1$ is odd,
\begin{align*}
  \dim \Spine(g, (c_1,\ldots, c_i - 1, \ldots, c_m)) \times \Spec k &= (c - 1 - g - m)(g + m - c/2).
\end{align*}
This is non-negative and strictly smaller than $\dim \Spine(g, \vec{c})$, so the result follows.
\end{proof}

\subsection{Application to smoothability of singularities}
The first non-smoothable curve singularities were constructed by Mumford in \cite{mumford_pathologies}, inspired by an argument of \cite{iarrobino}. The idea is simple: a family of non-isomorphic smoothable curves of genus $g$ (with no nontrivial automorphisms) has dimension at most that of $\moduli_{g}$; hence, if one can construct a family of non-isomorphic curves of larger dimension, it must contain curves with non-smoothable singularities. Many subsequent results on non-smoothable singularities rely on the same ``large families'' idea \cite{stevens_nonsmoothable}.
Our notion of spine gives us an easy way to construct large families of singular curves, hence a way to construct many more non-smoothable curve singularities.

Let us recall the definitions of the moduli stacks of all reduced curves and smoothable curves.

\begin{definition} (\cite[Appendix B]{smyth_zstable}) \label{def:u_gn}
For non-negative integers $g, n$, we define $\UU_{g,n}$ (or $\UU_{g}$, if $n = 0$) to be the stack whose fiber over a scheme $S$ is the category of flat, proper, finitely presented morphisms of algebraic spaces $\pi : C \to S$ with connected, reduced, pure one-dimensional geometric fibers of arithmetic genus $g$, together with $n$ sections $p_1, \ldots, p_n : S \to C$. We call $\UU_{g,n}$ the \emphbf{moduli stack of $n$-marked reduced curves of genus $g$}.

Let $\mathcal{V}_{g,n}$ be the irreducible component of $\UU_{g,n}$ containing $\moduli_{g,n}$; this is the \emphbf{moduli stack of smoothable curves}.
\end{definition}

We now show that one can construct a family of singular curves over $\Ter^g_{\vec{c}}$.

\begin{theorem}
Suppose $(g,m) \neq (0,1)$. Then there is a morphism
\[
  \Ter^{g}_{\vec{c}} \to \UU_{g,2m} \setminus \moduli_{g,2m}
\]
injective on points and factoring through the part of the stack with trivial stabilizers.
\end{theorem}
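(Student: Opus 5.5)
The plan is to build the family by the pushout construction of Section \ref{sec:basic_theory}, applied to a rigid pointed normalization. Take $\tilde{C} = \PP^1_{\ZZ}$, or a disjoint union of copies of it, as explained below. Choose $m$ distinct points $q_1,\dots,q_m$ (for instance $0,1,\infty,2,3,\dots$) and let $Z \subset \tilde C$ be the subscheme $\bigsqcup_i c_i q_i$, with an identification $Z \cong \Spec A_{\vec c}$ given by local coordinates $t_i$ at the $q_i$. The universal subalgebra $\mathscr B \subseteq A^+_{\vec c}\otimes \OO_{\Ter}$ then defines, via the family pushout of \cite[Section 4]{bozlee_guevara_smyth}, a flat proper family
\[
C = \tilde C \times \Ter^g_{\vec c} \sqcup_{Z \times \Ter} \Spec_{\Ter}\mathscr B \to \Ter^g_{\vec c}.
\]
Its fibers are reduced and connected, with one singular point of genus $g$ and $m$ branches. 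Since $\tilde C$ has genus $0$ and the pushout raises arithmetic genus by $\delta = g+m-1$, the fibers have arithmetic genus $g$.

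Next come the $2m$ markings, which should pin down the coordinates. On each branch I would mark two extra smooth points, say $p_{2i-1}, p_{2i}$, distinct from the $q_j$. This data, together with $q_i$, rigidifies the first-order coordinate only up to scaling by $\GG_m$. So I need the marked points to determine the full jet identification $\OO_{\tilde C, q_i}/\mm^{c_i} \cong \ZZ[t_i]/t_i^{c_i}$, at least up to an automorphism acting trivially on $\Ter$.

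I would not claim that an arbitrary coordinate is determined. Instead, the choice is to use a separate $\PP^1$ for each branch, attached at the singular point. The singularity's branches then lie on different rational components. On $\PP^1$ with three distinguished points $q_i, p_{2i-1}, p_{2i}$ there are no nontrivial automorphisms. The global coordinate $t_i = z/(z-1)$-type function, vanishing at $q_i$ with $p$'s at fixed positions, therefore gives a canonical jet at $q_i$. Connectedness holds because all components meet at the singular point.

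The main steps, in order, are these.
\begin{itemize}
\item[(1)] Flatness and the fiber invariants of the pushout family, citing \cite{bozlee_guevara_smyth}.
\item[(2)] Smooth marked points, so that $C \to \Ter$ lands in $\UU_{g,2m}$. The image avoids $\moduli_{g,2m}$ because every fiber is singular, using $(g,m)\neq(0,1)$ so that $\delta \ge 1$.
\item[(3)] Injectivity on points. Suppose two subalgebras $B, B'$ give isomorphic marked curves. Any isomorphism lifts to the normalizations, since normalization is functorial. It preserves the markings, so it is the identity on each $(\PP^1, q_i, p_{2i-1}, p_{2i})$. Hence it induces the identity on $A_{\vec c}$ and carries $B$ to $B'$, so $B = B'$. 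One should also note that the conductor $c_i$ is possibly smaller, but $B$ is recovered as the image of $\OO_C$ in $\nu_*\OO_{\tilde C}/(t_i^{c_i})$.
\item[(4)] Trivial stabilizers, by the same argument with $B = B'$: an automorphism of the marked curve lifts to a marked automorphism of $\bigsqcup (\PP^1,3\text{ pts})$, which is trivial.
\end{itemize}

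The main obstacle I expect is step (3)/(4) in families and over non-closed points. There I must ensure that the lift to normalization is compatible with the fixed coordinates $t_i$ to order $c_i$. The $\PP^1$-per-branch rigidification handles this cleanly. Care is also needed that $B$ is read off from $\OO_C$ rather than from the conductor of $C$, since points of $\Ter^g_{\vec c}$ may have smaller conductances.
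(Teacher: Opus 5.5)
Your final construction is essentially the paper's proof: $m$ disjoint copies of $\PP^1$, each carrying its branch point $q_i$ and two marked points, the family pushout over $\Ter^g_{\vec c}$, and injectivity and trivial stabilizers obtained by lifting an isomorphism of marked curves to the normalization, where it must fix each $(\PP^1, q_i, p_{2i-1}, p_{2i})$ and therefore identify the two subalgebras. One bookkeeping slip in step (1): for the disjoint union, $p_a(\tilde C) = 1-m$ rather than $0$, so $p_a(C) = 1-m+\delta = g$, whereas a single $\PP^1$ would give $p_a(C) = \delta = g+m-1$, which is a further reason that choice fails.
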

\begin{proof}

Set $\tilde{X}$ equal to the disjoint union of $m$ copies of $\PP^1$. Mark the points at 1 and $\infty$ of each $\PP^1$.
Set $\mathcal{Z}$ equal to the closed subscheme of $\tilde{X}$ containing the 0 of the $i$th copy of $\PP^1$ with multiplicity $c_i$
for each $i$.
Identify $\mathcal{Z}$ with $\Spec A_{\vec{c}}$. Let $\mathscr{B}$ be the universal element of $\Ter^g_{\vec{c}}$. The inclusion $A^+_{\vec{c}} \to A_{\vec{c}}$ induces a morphism
$\mathcal{Z} \times \Ter^g_{\vec{c}} \to \SSpec{\Ter^g_{\vec{c}}}{\mathscr{B}}$.
Taking the pushout
\[
\begin{tikzcd}
  \mathcal{Z} \times \Ter^g_{\vec{c}} \ar[r] \ar[d] & \tilde{X} \times \Ter^g_{\vec{c}} \ar[d] \\
  \SSpec{\Ter^g_{\vec{c}}}{\mathscr{B}} \ar[r] & X
\end{tikzcd}
\]
yields a family of curves $X \to \Ter^g_{\vec{c}}$, i.e., a morphism
$\Ter^g_{\vec{c}} \to \UU_{g,2m}$ (cf. \cite[Section 4]{bozlee_guevara_smyth}). Since $(g,m) \neq (0,1)$, the image curves always have singularities, so it factors through the complement of $\moduli_{g,2m}$.

To prove injectivity, suppose that two $k$-points $x, y$ of $\Ter^g_{\vec{c}}$ map to points $C_x, C_y$ of $\UU_{g,2m}$ such that there exists an isomorphism $\phi : C_x \to C_y$. There is an induced isomorphism of normalizations $\tilde{\phi} : \tilde{C}_x \to \tilde{C}_y$
by the universal property of normalization. By construction, $\tilde{X} = \tilde{C}_x = \tilde{C}_y$, so we may regard $\tilde{\phi}$ as an automorphism of $\tilde{X}$. Since $\tilde{\phi}$ preserves the markings and the 0 on each copy of $\PP^1$, it must be the identity. Then in order for $\phi$ to be an isomorphism, it must be that
the subalgebras associated to $x$ and $y$ are the same, so $x = y$.

By similar reasoning, any point in the image has only the identity automorphism. We conclude that the morphism $\Ter^{g}_{\vec{c}} \to \UU_{g,2m} \setminus \moduli_{g,2m}$ factors through the part of $\UU_{g,2m} \setminus \moduli_{g,2m}$ with trivial stabilizers.
\end{proof}

If the resulting family of curves is large enough, then it must contain non-smoothable curves.

\begin{corollary} \label{cor:nonsmooth_exist}
Suppose $(g,m) \neq (0,1)$. Then if
\[
  (c - m - g)\left(g + m - \frac{c}{2} - \frac{p}{2} \right) \geq 3g - 3 + 2m,
\]
where $p$ is the number of odd $c_i$'s, then a dense open subset of $\Spine(g,\vec{c})$ parametrizes non-smoothable reduced curve singularities with conductances bounded by $\vec{c}$.
\end{corollary}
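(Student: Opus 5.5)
The plan is the Mumford large-families argument, applied to the morphism $\Phi : \Ter^g_{\vec{c}} \to \UU_{g,2m} \setminus \moduli_{g,2m}$ of the preceding theorem, restricted to the spine. Work over an algebraically closed field $k$. By Lemma \ref{lem:sq_zero_territory} and the proof of Theorem \ref{thm:spine_dim}, $\Spine(g,\vec{c}) \times \Spec k$ is a grassmannian. It is therefore irreducible, of dimension
\[
  D = (c - m - g)\left(g + m - \frac{c}{2} - \frac{p}{2}\right),
\]
and by hypothesis $D \geq 3g - 3 + 2m$. Restrict $\Phi$ to the spine. Since $\Phi$ is injective on points and lands in the locus of trivial stabilizers, its image of the spine is a constructible set of dimension exactly $D$. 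This is the step to be careful with. I would first pass to the open substack of $\UU_{g,2m}$ with trivial stabilizers, which is an algebraic space. There a morphism injective on geometric points is quasi-finite, so the fiber dimension is $0$ and the dimension of the image closure equals $D$.

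Next, compare with the smoothable component. Consider the preimage $W = \Phi^{-1}(\mathcal{V}_{g,2m})$ in the spine; it is closed because $\mathcal{V}_{g,2m}$ is closed. If $W$ is all of the spine, the whole $D$-dimensional image lies in $\mathcal{V}_{g,2m} \setminus \moduli_{g,2m}$. This is the boundary of an irreducible stack of dimension $3g-3+2m$, so it has dimension at most $3g - 4 + 2m < D$, which is a contradiction. Restricting to the trivial-stabilizer locus keeps these dimensions meaningful, since there stack dimension equals space dimension. Hence $W$ is a proper closed subset of the irreducible spine, and its complement $U$ is dense and open.

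Every curve $C$ over a point of $U$ is non-smoothable. By construction $C$ is the pushout of a disjoint union of $\PP^1$'s along $\Spec B$, so its only singular point is the one singularity determined by $B$. Smoothability of a curve whose other points are smooth is equivalent to smoothability of its singularities, because the deformation theory of the curve surjects onto the local deformations when $H^2 = 0$ for curves. So if the singularity were smoothable, a smoothing of the singularity would globalize to a smoothing of $C$, placing $C$ in $\mathcal{V}_{g,2m}$. Marked points are harmless here, since they lie in the smooth locus. The singularity therefore cannot be smoothable, and it has conductances bounded by $\vec{c}$ because $B \in \Ter^g_{\vec{c}}$.

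The expected main obstacle is the dimension bookkeeping: the injectivity-implies-dimension step and the strict bound $\dim(\mathcal{V}_{g,2m}\setminus\moduli_{g,2m}) < 3g-3+2m$. Together these are exactly what turn the non-strict hypothesis $\geq$ into a strict failure of containment.
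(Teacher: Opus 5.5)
Your proposal is correct and follows essentially the paper's route: injectivity on points plus trivial stabilizers gives $\dim \phi(\Spine(g,\vec{c})) = D \geq 3g-3+2m > \dim(\mathcal{V}_{g,2m}\setminus\moduli_{g,2m})$, so $\phi^{-1}(\mathcal{V}_{g,2m})$ is a proper closed subset of the irreducible spine. You also spell out the local-to-global smoothing step, which the paper leaves implicit here and only cites later as \cite[Corollary 29.10]{hartshorne_deformation_theory}.

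The one technical divergence is that you pass to an open trivial-stabilizer substack that is an algebraic space, but openness of that locus is not automatic in the non-separated stack $\UU_{g,2m}$, whose inertia need not be proper. The paper avoids this by applying \cite[Tag 0DS6]{stacks-project} directly to the quasi-DM morphism $\phi$, which is the safer way to justify the dimension-of-image step.
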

\begin{proof}
Consider the morphism $\phi : \Spine(g,\vec{c}) \to \UU_{g,2m} \setminus \moduli_{g,2m}$ of the previous theorem restricted to the spine. We check the (fairly technical) hypotheses of \cite[Tag 0DS6]{stacks-project} in order to show $\phi(\Spine(g,\vec{c}))$ and $\Spine(g,\vec{c})$ have the same relative dimension.
Since $\phi$ is one-to-one on geometric points and factors through the trivial-stabilizer locus, the diagonal $\Delta_\phi$ is injective on geometric points. Since $\Spine(g,\vec{c})$ is quasicompact, $\Delta_\phi$ is quasifinite, so by definition, $\phi$ is quasi-DM. Since both $\Spine(g, \vec{c})$ and $\UU_{g,2m}$ are locally finite type over $\Spec \ZZ$, they are both Jacobson, pseudo-catenary, and locally Noetherian. Moreover, $\UU_{g,2m}$ is quasi-separated by \cite[Theorem B.1]{smyth_zstable}. Since the geometric fibers of $\phi$ are 0-dimensional, we conclude by \cite[Tag 0DS6]{stacks-project} that the stack-theoretic image $\phi(\Spine(g,\vec{c}))$ has the same relative dimension as $\Spine(g,\vec{c})$ over $\ZZ$.

By Theorem \ref{thm:spine_dim}, this relative dimension is $(c - m - g)(g + m - c/2 - p/2)$, which by hypothesis is greater than or equal to $3g - 3 + 2m$.
The smoothable component \(\mathcal{V}_{g,2m}\) of $\UU_{g,2m}$ has relative dimension \(3g-3+2m\),
and \(\moduli_{g,2m}\) is a dense open substack. Hence the locus $\mathcal{V}_{g,2m} \setminus \moduli_{g,2m}$ of singular
smoothable curves has relative dimension at most \(3g-3+2m-1\).
It follows that $\mathcal{V}_{g,2m} \cap \phi(\Spine(g,\vec{c}))$ is a proper closed subset of $\phi(\Spine(g,\vec{c}))$. Therefore the open subset $\phi^{-1}(\UU_{g,2m} \setminus \mathcal{V}_{g,2m})$ of the irreducible scheme $\Spine(g,\vec{c})$ is nonempty, hence dense, and parametrizes non-smoothable singularities.
\end{proof}

Given a genus $g$ and number of branches $m$, we now vary over all possible conductances to get a sufficient condition for when there exists a
non-smoothable reduced curve singularity of type $(g,m)$.

\begin{theorem} \label{thm:smoothability}
Suppose $g \geq 0$, $m \geq 1$, and $(g,m) \neq (0,1)$.

\begin{enumerate}
  \item If $3g > m$, then there exists a reduced non-smoothable singularity of genus $g$ with $m$ branches whenever
\[
  (g + m)^2 - \beta \geq 24g - 24 + 16m
\]
where
\[
  \beta = \begin{cases}
      0 & \text{ if } g + m \equiv 0 \pmod{4} \\
      1 & \text{ if } g + m \equiv 1,3 \pmod{4} \\
      4 & \text{ if } g + m \equiv 2 \pmod{4}.
  \end{cases}
\]
  \item If $3g \leq m$, then there exists a reduced non-smoothable singularity of genus $g$ with $m$ branches whenever
  \[
    (m - g)g \geq 3g - 3 + 2m
  \]
\end{enumerate}
\end{theorem}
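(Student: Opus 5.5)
The plan is to apply Corollary \ref{cor:nonsmooth_exist} with a well-chosen conductance vector $\vec{c}$ and then maximize its left-hand side over $\vec{c}$. A non-smoothable singularity with conductances bounded by $\vec{c}$ still has genus $g$ and $m$ branches: points of $\Ter^g_{\vec{c}}$ parametrize singularities of genus $g$ with $m$ branches, and the resulting curve in $\UU_{g,2m}$ is non-smoothable exactly because its singularity is. So it suffices to exhibit, for each $(g,m)$ in the stated ranges, a vector $\vec{c}$ with nonempty spine and
\[
  F(\vec{c}) \coloneqq (c-m-g)\Bigl(g+m-\tfrac{c}{2}-\tfrac{p}{2}\Bigr) \geq 3g-3+2m .
\]
For fixed $c=\sum c_i$, the quantity $F$ decreases as $p$ increases, so we want as few odd $c_i$ as possible, subject to $c_i\geq 1$.

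\textbf{Case 2 ($3g\leq m$).} Take $\vec{c}=(2,\ldots,2)$, so $c=2m$ and $p=0$. This is the spine of Example \ref{ex:all_twos}: it is $\Grass(m-g,m)$, nonempty since $g\leq m$. Then $F=(m-g)g$, which gives the stated criterion. This is the right choice in this range because the unconstrained optimum lies at $c<2m$, which would force some $c_i=1$ and hence odd entries. Concretely, writing $x=c-m-g$ and treating $p$ as $0$, we get $F=x\,(g+m-x-m-g+\dots)$; more simply, $F=x\,\frac{g+m-x}{2}$, which is maximized at $x=\frac{g+m}{2}$, i.e.\ $c=\frac{3(g+m)}{2}$. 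This is $\leq 2m$ exactly when $3g\leq m$. When $c\leq 2m$ we can still take all $c_i\in\{1,2\}$, and the $1$'s are odd, which pushes the optimum back. I expect a short check shows $(2,\ldots,2)$ is at least as good here. This check is not strictly needed, though, since the theorem only asserts sufficiency.

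\textbf{Case 1 ($3g>m$).} Choose $c$ near $\frac{3(g+m)}{2}$ with $c\geq 2m$, so that we can take all $c_i$ even (hence $p=0$) whenever $c$ is even, and at most one odd $c_i$ when $c$ is odd. Set $N=g+m$ and $x=c-N$. With $p\in\{0,1\}$ and $p\equiv c \pmod 2$, we have $F=\frac{x(N-x-p)}{2}$. We then optimize over integers $x$ according to $N \bmod 4$:
\begin{itemize}
\item $N\equiv 0$: take $x=N/2$ (then $c$ is even), giving $F=N^2/8$.
\item $N\equiv 2$: $x=N/2$ is odd, and making $c$ even requires $x$ of the same parity as $N$, i.e.\ even, so take $x=N/2\pm1$, giving $F=(N^2-4)/8$.
\item $N$ odd: $x=(N\pm1)/2$; picking the one with $c$ even gives $F=(N^2-1)/8$.
\end{itemize}
So $8F=N^2-\beta$, and the inequality $8F\geq 8(3g-3+2m)$ is exactly $(g+m)^2-\beta\geq 24g-24+16m$.

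The main obstacle is the bookkeeping in Case 1. We must verify that the chosen $c$ satisfies $c\geq 2m$, so that all $c_i\geq 2$ can be made even; this follows from $3g>m$ up to the $\pm1$ adjustments, and small cases may need care. We must also check that the spine is nonempty, i.e.\ $0\leq x\leq \frac{c-p}{2}$. I would handle both by the case analysis on $N \bmod 4$. In the borderline situations where the parity-adjusted $x$ would give $c<2m$, the inequality to be proved is already implied by the Case 2 bound, or the conductance vector can be perturbed.
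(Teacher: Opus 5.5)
Your proposal is correct and is essentially the paper's proof. The paper likewise applies Corollary~\ref{cor:nonsmooth_exist} and reduces to all $c_i$ even, with $c = 2k \geq 2m$. It takes $k = m$, i.e.\ $\vec{c} = (2,\ldots,2)$, when $3g \leq m$, and otherwise takes $k$ nearest $\frac{3}{4}(g+m)$, which is exactly your parity-adjusted choice of $x$. The only difference is that the paper maximizes the spine dimension over all $\vec{c}$, while you exhibit one suitable $\vec{c}$, which is all the statement requires.

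The bookkeeping you flagged is immediate, and your hedged fallback is not needed. In every residue class of $g+m$ mod $4$, your $c$ equals $2k$ with $k \in \{\lfloor \frac{3}{4}(g+m)\rfloor, \lceil \frac{3}{4}(g+m)\rceil\}$. Since $3g > m$ is equivalent to $\frac{3}{4}(g+m) > m$ and $m$ is an integer, $k \geq m$, so $c \geq 2m$. Also $3g > m \geq 1$ forces $g \geq 1$, so $g+m \geq 2$, which gives $0 \leq x \leq g+m$ and hence a nonempty spine.
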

\begin{proof}
We first compute the maximum, over all choices of conductances $\vec{c}$, of the dimension of the spine of $\Ter^g_{\vec c}$.

Recall from Theorem \ref{thm:spine_dim} that for a choice of positive conductances $(c_1, \ldots, c_m)$ with sum $c$, the spine is nonempty if and only if $g + m \leq c \leq 2(g+m) - p$, where $p$ is the number of odd conductances. Moreover, when it is nonempty the dimension of the spine is $(c - m - g)\left(g + m - \frac{c}{2} - \frac{p}{2} \right)$. Observe that if any of the odd conductances are incremented by 1, the spine remains nonempty and its dimension does not decrease. Indeed, this replaces \((c,p)\) by \((c+1,p-1)\), preserving \(c+p\);
the second factor in the dimension formula is unchanged and the first factor
increases by \(1\). Thus for the purposes of finding the maximum spine dimension, we may assume each $c_i$ is even and $c_i \geq 2$. Then $c$ is even and greater than or equal to $2m$.

We are now maximizing $(c - m - g)(g + m - c/2)$ with $g, m$ held constant and $c$ allowed to vary
over the even integers satisfying $g+m \leq c \leq 2(g+m)$ and $2m \leq c$. Write $c = 2k$, so that we are maximizing
\[
  (2k - m - g)(g + m - k) = -2(k - (g + m)/2)(k - (g + m))
\]
with $k$ an integer satisfying $\frac{g+m}{2} \leq k \leq g + m$ and $m \leq k$. As this is quadratic in $k$, the maximum dimension is achieved at the closer of $k = \left\lfloor \frac{3}{4}(g + m) \right\rfloor$ or $k = \left\lceil \frac{3}{4}(g + m) \right\rceil$ to $k = \frac{3}{4}(g + m)$ unless this is less than $m$.

Suppose $\frac{3}{4}(g + m) > m$, or equivalently, $3g > m$. As
\[
  \left\lfloor \frac{3}{4}(g+m) \right\rfloor = \frac{3}{4}(g + m) - \begin{cases} 1/4 & g + m \equiv 3 \pmod{4} \\ 1/2 & g + m \equiv 2 \pmod{4} \\ 3/4 & g + m \equiv 1 \pmod{4} \\ 0 & g + m \equiv 0 \pmod{4} \end{cases}
\]
and
\[
  \left\lceil \frac{3}{4}(g+m) \right\rceil = \frac{3}{4}(g + m) + \begin{cases} 3/4 & g + m \equiv 3 \pmod{4} \\ 1/2 & g + m \equiv 2 \pmod{4} \\ 1/4 & g + m \equiv 1 \pmod{4} \\ 0 & g + m \equiv 0 \pmod{4} \end{cases}
\]
we have a maximum dimension of $\frac{1}{8}(g + m)^2 - \alpha$, where 
\[
  \alpha = \begin{cases}
      0 & g + m \equiv 0 \pmod{4} \\
      \frac{1}{8} & g + m \equiv 1,3 \pmod{4} \\
      \frac{1}{2} & g + m \equiv 2 \pmod{4}
  \end{cases}
\]

If $3g \leq m$, then the maximum dimension must occur when $k = m$. In this case the dimension of the spine is $(c - m - g)(g + m - c/2) = (m - g)g$. 

The result follows by Corollary \ref{cor:nonsmooth_exist}.
\end{proof}

\begin{remark} \label{rmk:nonsmoothability_literature}
We discuss the literature on non-smoothable singularities that enters into our Figure \ref{fig:smoothability} in more detail. For a more complete discussion, we direct the reader to \cite{stevens_nonsmoothable}.

In \cite{mumford_pathologies}, Mumford constructs a family of singular curves over the spine of a unibranch territory (but of course, without the language of territories) by inserting singularities into a curve $C$ with no automorphisms. He observes that for $c = 4v$ and singularity genus $g = 3v - 1$, the dimension of the spine (namely $v^2$) grows quadratically in $v$, while the dimension of the appropriate moduli space of smooth curves (namely $3g(C) + 9v - 6$) only grows linearly in $v$, so there must be non-smoothable singularities. Setting $g(C) = 3$, the minimum $v$ for which this construction shows there are non-smoothable singularities is $v = 10$, or equivalently, for singularity genus $g = 29$. The method was refined by Greuel \cite[Section 3.2]{greuel}, whose analysis implies that for $m = 1, g \geq 22$ there are non-smoothable singularities.

Denote by $L^n_m$ a singularity formed by taking $m$ lines through the origin in $\AA^n$. By \cite[Lemma 1.5]{stevens_nonsmoothable}, the general such singularity has genus
\[
  g(L^n_m) = (d - 1)m + 1 - \binom{n + d - 1}{d - 1}
\]
where $d = d(n,m)$ is the unique integer such that
\[
  \binom{n+d-2}{d-1} < m \leq \binom{n+d-1}{d}.
\]
In particular, for $d = 2$, the genus is given by $g = m - n$.

Pinkham proved in his thesis \cite[Theorem 1.11]{pinkham_thesis} that the general $L^n_m$ is not smoothable when $3 \leq g \leq n - 1$ and $n > 5 + \frac{6}{g-2}$. These always have $d = 2$, so, substituting $n = m - g$, it is equivalent to ask that $g \geq 3$, $m > g + 5 + \frac{6}{g-2}$, and $m \geq 2g + 1$.

Using a large-families argument, Greuel extends Pinkham's result \cite[Section 3.4]{greuel} to show that the general $L^n_m$ is not smoothable if
\begin{enumerate}
  \item $d = 2$ and $(m - n - 2)(n - 5) \geq 7$ or
  \item $d \geq 3$ and $m(n - 3d) + 3\binom{n + d - 1}{d-1} \geq n^2 - 1$.
\end{enumerate}
The condition in the $d = 2$ case can be rewritten in terms of $g$ and $m$ as $m \geq g + 5 + \frac{7}{g-2}$. A computer search shows that only two $L^n_m$'s satisfying Greuel's condition in the $d \geq 3$ case land in the plotted region of Figure \ref{fig:smoothability}, namely $L^{7}_{29}$ at $(g,m) = (23,29)$ and $L^7_{30}$ at $(g,m) = (25,30)$.

Pinkham showed that for $d = 2$, a particular $L^n_m$ is smoothable if and only if an associated set of points is a hyperplane section of a canonical curve \cite[Proposition 11.9]{pinkham_thesis}. Stevens (\cite[Proposition 10]{stevens_canonical_curves}, with the $g = 8$ case resolved in \cite[Section 3.2]{stevens_nonsmoothable}) used this to show that the general $L^n_m$ with $d = 2$ is non-smoothable if and only if 
\begin{enumerate}
    \item $m > g + 5 + \frac{6}{g - 2}$ or
    \item $m > g + 5$ and $g$ is either 4 or 6.
\end{enumerate} 

By classifying singularities with low genus, Stevens showed that all singularities with $g \leq 2$ are smoothable \cite[Lemma 4-4]{stevens_la_rabida} and the only non-smoothable singularities with $g = 3$ are joins of a non-smoothable $L^{m-3}_m$ with an ordinary $k$-fold point. Applying the result of the previous paragraph, all singularities with $g = 3$ and $m \leq 14$ are smoothable.

The only other known general method by which one can show singularities are non-smoothable is Buchweitz's criterion, introduced in \cite{buchweitz}, but more easily accessible in \cite[Section 2]{stevens_nonsmoothable}. This criterion has been used to identify non-smoothable monomial singularities. In order to complete Figure \ref{fig:smoothability}, a computer search was performed which identified the following
semigroups with non-smoothable associated singularities.

\begin{center}
\begin{tabular}{|c|c|} \hline
genus & semigroup satisfying Buchweitz's criterion \\ \hline
$g \leq 15$ & none \\ \hline
16 & $\langle 13,14,15,16,17,18,21,22,24 \rangle$\\ \hline
17 & $\langle 14,15,16,17,18,19,20,23,24,26 \rangle$ \\ \hline
18 & $\langle 15,16,17,18,19,20,21,24,26,27,28 \rangle$\\ \hline
19 & $\langle 16,17,18,19,20,21,22,23,26,28,29,30 \rangle$ \\ \hline
20 & $\langle 17,18,19,20,21,22,23,24,27,29,30,31,32 \rangle$\\ \hline
21 & $\langle 18,19,20,21,22,23,24,25,26,29,31,32,33,34 \rangle$\\ \hline
\end{tabular}
\end{center}

To our knowledge, these cover all known $(g,m)$ for which there was constructed a non-smoothable singularity with genus $g$ and $m$ branches prior to our result. 
\end{remark}

\section{Restriction, contraction, and join of branches}
\label{sec:functoriality}

In this section we define several maps between locally closed subschemes of various territories corresponding to the operations on singularities of join (or transverse union), contraction of branches, and restriction to branches. These morphisms express how multibranch territories ($m > 1$) relate to unibranch territories ($m = 1$).

We begin with the operation of transverse union. The following notation will be useful.

\begin{definition}
Given a list of positive integers $\vec{c} = (c_1, \ldots, c_m)$ and a subset $I = \{ i_1 < i_2 < \cdots < i_{r} \} \subseteq \{1, \ldots, m \}$, denote by $\vec{c}|_I$ the list $(c_{i_1}, \ldots, c_{i_{r}})$.

Let $\vec{c} = (c_1,\ldots, c_m)$ be a vector of positive integers. Write $\mm_{\vec{c}}$ for the maximal ideal of $A^+_{\vec{c}}$. Observe
\[
  \mm_{\vec{c}} = \bigoplus_{i = 1}^m \mm_{(c_i)} = \bigoplus_{i=1}^m \ZZ \left\{ t_i, \ldots, t_i^{c_i - 1} \right\}.
\]
If $I \subseteq \{ 1, \ldots, m \}$ is a subset of the indices, we write $\pi_I : \mm_{\vec{c}} \to \mm_{\vec{c}|_I}$ for the natural projection and $j_I : \mm_{\vec{c}|_I} \to \mm_{\vec{c}}$ for the natural
inclusion.
\end{definition}

\begin{definition}
Let $\vec{c} = (c_1, \ldots, c_m)$ be a list of positive integers.
For any set partition $I_1, \ldots, I_k$ of $\{1, \ldots, m \}$ and non-negative integer partition $g_1 + \cdots + g_k = g$ we define the \emphbf{join} operation to be the morphism
\[
  \bigvee : \Ter^{g_1}_{\vec{c}|_{I_1}} \times \cdots \times \Ter^{g_k}_{\vec{c}|_{I_k}} \to \Ter^g_{\vec{c}}
\]
given on $S$-points by
\[
  (\mathscr{B}_1, \ldots, \mathscr{B}_k) \mapsto \OO_S \cdot 1 \oplus j_{I_1}(\mm_{\mathscr{B}_1}) \oplus \cdots \oplus j_{I_k}(\mm_{\mathscr{B}_k}).
\]
\end{definition}

\begin{lemma}
$\bigvee$ is a closed immersion.
\end{lemma}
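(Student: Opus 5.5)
Following the paper's own approach of realizing subfunctors inside grassmannians, we show that $\bigvee$ is a monomorphism whose image is a closed subfunctor of $\Ter^g_{\vec{c}}$, identified with the product.

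First we check that $\bigvee$ is well defined. Given families $\mathscr{B}_\ell$ on $S$, the direct sum $\mm_{\mathscr{B}} = \bigoplus_\ell j_{I_\ell}(\mm_{\mathscr{B}_\ell})$ is a subsheaf of $\mm_{\vec{c}} = \bigoplus_\ell j_{I_\ell}(\mm_{\vec{c}|_{I_\ell}})$. The quotient is $\bigoplus_\ell \mm_{\vec{c}|_{I_\ell}}/\mm_{\mathscr{B}_\ell}$, which is locally free of rank $\sum g_\ell = g$. The sum is closed under multiplication because products of elements of different blocks vanish ($t_it_j=0$ for $i\ne j$), and each block is closed under multiplication.

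Next, we define the closed locus $Z \subseteq \Ter^g_{\vec{c}}$ of families $\mathscr{B}$ such that:
\begin{enumerate}
\item $\mm_{\mathscr{B}} \supseteq j_{I_\ell}\pi_{I_\ell}(\mm_{\mathscr{B}})$ for every $\ell$, equivalently $\mm_{\mathscr{B}} = \bigoplus_\ell (\mm_{\mathscr{B}} \cap j_{I_\ell}\mm_{\vec{c}|_{I_\ell}})$;
\item the pieces have the prescribed coranks $g_\ell$.
\end{enumerate}
Condition (1) is closed, being the vanishing of the composite $\mm_{\mathscr{B}} \to \mm_{\vec{c}} \xrightarrow{j\pi} \mm_{\vec{c}} \to \mm_{\vec{c}}/\mm_{\mathscr{B}}$, a map of locally free sheaves, exactly as with $\mu$ in Section~\ref{ssec:ter_eqns}.

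The main subtlety is condition (2): on the locus where (1) holds, we need the pieces $\pi_{I_\ell}(\mm_{\mathscr{B}})$ to be locally free of the right ranks. Under (1), $\mm_{\vec{c}}/\mm_{\mathscr{B}} = \bigoplus_\ell \mm_{\vec{c}|_{I_\ell}}/\pi_{I_\ell}(\mm_{\mathscr{B}})$, and this is locally free because each summand is a direct summand of a locally free module. The ranks $g_\ell$ are therefore locally constant, so imposing them cuts out an open-and-closed piece of the closed locus from (1); call this piece $Z$.

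On $Z$ we have the inverse map $\mathscr{B} \mapsto (\OO_S \oplus \pi_{I_\ell}(\mm_{\mathscr{B}}))_\ell$. Each component is a subalgebra, since $\pi_{I_\ell}$ is multiplicative and, under (1), $\pi_{I_\ell}(\mm_{\mathscr{B}})$ is identified with a sub-$\OO_S$-module of $\mm_{\mathscr{B}}$. The two constructions are mutually inverse on $S$-points, so $\bigvee$ induces an isomorphism of functors onto $Z$. Hence $\bigvee$ is a closed immersion.

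Alternatively, one could avoid functor-of-points bookkeeping by noting that $\bigvee$ is the restriction of the direct-sum map of grassmannians $\prod_\ell \Grass(\cdot,\mm_{\vec{c}|_{I_\ell}}) \to \Grass(\cdot,\mm_{\vec{c}})$, which is a closed immersion, together with the fact that $\Ter$ is closed in both source and target.
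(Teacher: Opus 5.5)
Your proof is correct, but it takes a different route from the paper. The paper's proof has three steps:
\begin{enumerate}
\item the source $\prod_\ell \Ter^{g_\ell}_{\vec{c}|_{I_\ell}}$ is proper and the target is separated, so $\bigvee$ is proper;
\item $\bigvee$ is injective on $S$-points, hence a monomorphism;
\item a proper monomorphism is a closed immersion \cite[Tag 04XV]{stacks-project}.
\end{enumerate}
You instead describe the image scheme-theoretically and write down an inverse. The image is the locus in $\Ter^g_{\vec{c}}$ where $\mm_{\mathscr{B}}$ is stable under the block idempotents $j_{I_\ell}\pi_{I_\ell}$. This condition is closed, being the vanishing of a map of vector bundles. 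On that locus you impose the prescribed block coranks, which is an open-and-closed condition because the summands of the locally free quotient are themselves locally free. The inverse is $\mathscr{B} \mapsto (\OO_S \oplus \pi_{I_\ell}(\mm_{\mathscr{B}}))_\ell$.

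What each approach buys:
\begin{itemize}
\item The paper's argument is shorter. It leans on properness of territories and on the general, not entirely elementary, fact that proper monomorphisms are closed immersions.
\item Your argument uses neither input and yields explicit equations for the image.
\item Your argument also shows that on the image the restriction $\pi_{I_\ell}(\mm_{\mathscr{B}})$ and the contraction $j_{I_\ell}^{-1}(\mm_{\mathscr{B}})$ coincide. This is essentially the later statement that $\bigvee$ is a section of $(\kappa_I,\rho_{I'})$.
\end{itemize}
One minor point: you do not need condition (1) to see that $\pi_{I_\ell}(\mm_{\mathscr{B}})$ is closed under multiplication. That holds for any $\mathscr{B}$, since $\pi_{I_\ell}$ is the restriction of the algebra surjection $A^+_{\vec{c}} \to A^+_{\vec{c}|_{I_\ell}}$.
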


\begin{proof}
Proper monomorphisms are closed immersions \cite[Tag 04XV]{stacks-project}. Since the source and target are proper, the map is also proper. The map is a monomorphism since, for each $S$, the map on $S$-points $(\mathscr{B}_1, \ldots, \mathscr{B}_k) \mapsto \OO_S \cdot 1 \oplus j_{I_1}(\mm_{\mathscr{B}_1}) \oplus \cdots \oplus j_{I_k}(\mm_{\mathscr{B}_k})$ is injective.
\end{proof}

\medskip

Next, we consider contraction and restriction of branches. Since the genus of the singularities obtained by performing contractions or restrictions may vary, we first need to restrict to appropriate domains.

\begin{definition}
Let $\vec{c} = (c_1,\ldots, c_m)$ be a vector of positive integers with sum $c$. Let $I \subseteq \{ 1, \ldots, m \}$ be a subset of the indices and let $I' = \{1, \ldots, m\} - I$ be its complement, such that both $I$ and $I'$ are nonempty. Let $g$ be a non-negative integer and $g_I + g_{I'} = g$ be a partition of $g$ into non-negative integers. We define $\Gamma^{g_I, g_{I'}}_{I}$ to be the locally closed subscheme of $\Grass(c - m - g, \mm_{\vec{c}})$ on which the corank $g$ subspace $V$ of $\mm_{\vec{c}}$ has intersection $V \cap \mm_{\vec{c}|_I} = j_I^{-1}(V)$ of corank $g_I$ in $\mm_{\vec{c}|_{I}}$, or equivalently, on which $V$ has projection $\pi_{I'}(V)$ of corank $g_{I'}$ in $\mm_{\vec{c}|_{I'}}$. (More precisely, take the flattening stratum on which $\mm_{\vec{c}|_{I}} / j_I^{-1}(V)$ is locally free of rank $g_I$.) Then we set
\[
  \Ter^{g_I,g_{I'}}_{I} \coloneqq \Gamma^{g_I, g_{I'}}_{I} \cap \Ter^{g}_{\vec{c}}
\]
where the intersection is taken in $\Grass(c - m - g, \mm_{\vec{c}})$.
\end{definition}

\begin{remark} \label{rem:contraction_strata}
The spaces $\Gamma^{0, g}_I, \Gamma^{1, g-1}_I, \ldots, \Gamma^{g,0}_I$ form a locally closed stratification of $\Grass(c- g -m, \mm_{\vec{c}})$ with $\Gamma^{i, g-i}_I$ contained in the closure of $\Gamma^{j,g-j}_I$ (when nonempty) if and only if $j \geq i$. It follows that $\Ter^{0,g}_I, \ldots, \Ter^{g,0}_I$ is a decomposition of $\Ter^g_{\vec{c}}$ into disjoint locally closed subschemes, and for each $j$, the closure of $\Ter^{j, g-j}_{I}$ in $\Ter^g_{\vec{c}}$ is contained in the union $\bigcup_{i \leq j} \Ter^{i,g-i}_I$. Moreover, the first nonempty subspace among $\Ter^{0,g}_I, \ldots, \Ter^{g,0}_I$ is closed in $\Ter^g_{\vec{c}}$, and the last nonempty subspace is open.
\end{remark}

\begin{definition}
Let $\vec{c}$, $I$, $I'$, $g = g_I + g_{I'}$ be as above. We define the \emphbf{restriction to $I'$-branches} morphism by
\begin{align*}
  \rho_{I'} : \Ter^{g_I, g_{I'}}_{I} &\to \Ter^{g_{I'}}_{\vec{c}|_{I'}} \\
    (\OO_S \cdot 1 \oplus \mm_{\mathscr{B}}) &\mapsto \OO_S \cdot 1 \oplus \pi_{I'}(\mm_{\mathscr{B}}).
\end{align*}
\end{definition}

Under the correspondence with curve singularities, this corresponds to the function taking a normalized curve singularity to the union of its branches labeled by $I'$. In fact, if $\mathscr{B} \in \Ter^{g_I,g_{I'}}_I(S)$, then $\SSpec{S}{\rho_{I'}(\mathscr{B})}$ admits a natural closed immersion into $\SSpec{S}{\mathscr{B}}$, induced by $\pi_{I'}$. The associated ideal is $\mm_{\mathscr{B}} \cap j_I(\mm_{\vec{c}|_I})|_S$. This closed immersion fits into a diagram
\[
\begin{tikzcd}
    \SSpec{S}{\rho_{I'}(\mathscr{B})} \ar[r] \ar[d] & \SSpec{S}{\mathscr{B}} \ar[d] \\
    S \times \Spec A_{\vec{c}|_{I'}} \ar[r] & S \times \Spec A_{\vec{c}}
\end{tikzcd}
\]
where the vertical arrows are induced by the inclusions and horizontal arrows are closed immersions induced by $\pi_{I'}$.

\begin{definition}
Let $\vec{c}$, $I$, $I'$, $g = g_I + g_{I'}$ be as above. We define the \emphbf{contraction to $I$-branches} (or \emphbf{contraction of $I'$-branches}) morphism by
\begin{align*}
  \kappa_I : \Ter^{g_I, g_{I'}}_{I} &\to \Ter^{g_I}_{\vec{c}|_{I}} \\
   (\OO_S \cdot 1 \oplus \mm_{\mathscr{B}}) &\mapsto \OO_S \cdot 1 \oplus j_I^{-1}(\mm_{\mathscr{B}}).
\end{align*}
\end{definition}

Under the correspondence with curve singularities, this corresponds to contracting each of the branches labeled by $I'$ to the singular point. More precisely, suppose $k$ is an algebraically closed field, let $A = k[t_1,\ldots,t_m]/(t_it_j \mid 1 \leq i < j \leq m)$, and let $A_I$ be the $k$-subalgebra of $A$ generated by $t_i$ for $i \in I$. Suppose $\Spec A \to X$ is the seminormalization of a reduced curve singularity $x \in X$. Let $\Spec A \to \Spec A_I$ be the morphism induced by the inclusion: it collapses the branches labeled by $I'$.
Then we may form a new curve singularity by pushout:
\[
\begin{tikzcd}
  \Spec A \ar[d] \ar[r] \ar[dr, phantom, "\lrcorner", very near end] & \Spec A_I \ar[d] \\
X \ar[r, "\pi"] & X_I \\
\end{tikzcd}
\]
If the singularity $x \in X$ corresponds to a point $\mathscr{B} \in \Ter^{g_I,g_{I'}}_I(k)$, then the singularity $\pi(x)$ corresponds to $\kappa_I(\mathscr{B}) \in \Ter^{g_I}_{\vec{c}|_I}$.

\begin{example} \label{ex:tacnodes_restriction_contraction}
Consider $\Ter^1_{(2,2)}$. By Example \ref{ex:all_twos}, it is isomorphic to $\PP^1$, with a point $[a : b] \in \PP^1(S)$ corresponding to the subalgebra $\OO_S \oplus \OO_S(at_1 + bt_2)$ of $A^+_{(2,2)} \otimes \OO_S$. In particular, the point $[0:1]$ corresponds to a cusp on branch 1 glued transversely to a smooth branch 2, the point $[1:0]$ corresponds to a smooth branch 1 glued transversely to a cusp on branch 2, and the remaining points $[a : b]$ with $a, b \neq 0$ correspond to tacnodes.

Set $I = \{1 \}$ and $I' = \{ 2 \}$. Then $\Gamma^{1,0}_I = \{ [ a : b] \in \PP^1 \mid b \neq 0 \}$ and $\Gamma^{0,1}_{I} = \{ [1 : 0] \}$ is its complement. The restriction and contraction maps are as follows:
\begin{itemize}
  \item The restriction $\rho_{I'} : \Gamma^{1,0}_I \to \Ter^0_{(2)}$ to the second branch takes both the tacnodes and the cusp glued to a smooth branch to a smooth branch.
  \item The map $\kappa_I : \Gamma^{1,0}_I \to \Ter^1_{(2)}$ contracting the second branch takes both the tacnodes and the cusp glued to a smooth branch to a cusp. We may think that contracting the second branch of a tacnode forces us to crimp the tangent vector of the first branch to a cusp.
  \item The map $\rho_{I'} : \Gamma^{0,1}_I \to \Ter^1_{(2)}$ restricting to the second branch takes the smooth branch glued to a cusp to the branch with the cusp.
  \item The map $\kappa_I : \Gamma^{0,1}_I \to \Ter^0_{(2)}$ contracting the second branch takes the smooth branch glued to a cusp to a smooth branch.
\end{itemize}
\end{example}

Our three maps are related in the following way, which is immediate from the definitions.

\begin{lemma}
The map $\bigvee : \Ter^{g_I}_{\vec{c}|_I} \times \Ter^{g_{I'}}_{\vec{c}|_{I'}} \to \Ter^g_{\vec{c}}$ factors through $\Ter^{g_I, g_{I'}}_{I}$ and is a section of the morphism $(\kappa_I, \rho_{I'}) : \Ter^{g_I, g_{I'}}_{I} \to \Ter^{g_I}_{\vec{c}|_I} \times \Ter^{g_{I'}}_{\vec{c}|_{I'}}$. In particular, both $\rho_{I'}$ and $\kappa_I$ are surjective and $\Ter^{g_I, g_{I'}}_{I}$ is nonempty whenever $\Ter^{g_I}_{\vec{c}|_I} \times \Ter^{g_{I'}}_{\vec{c}|_{I'}}$ is.
\end{lemma}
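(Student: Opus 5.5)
We must show three things: $\bigvee$ lands in $\Ter^{g_I,g_{I'}}_I$; composing with $(\kappa_I,\rho_{I'})$ gives the identity; and the surjectivity and nonemptiness consequences. Everything is checked on $S$-points, with the functorial definitions of all maps.

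Let $(\mathscr{B}_1,\mathscr{B}_2)$ be an $S$-point of $\Ter^{g_I}_{\vec{c}|_I}\times\Ter^{g_{I'}}_{\vec{c}|_{I'}}$, and put $V = j_I(\mm_{\mathscr{B}_1})\oplus j_{I'}(\mm_{\mathscr{B}_2})$. Since $\mm_{\vec{c}} = j_I(\mm_{\vec{c}|_I})\oplus j_{I'}(\mm_{\vec{c}|_{I'}})$, we have $j_I^{-1}(V) = \mm_{\mathscr{B}_1}$ and $\pi_{I'}(V) = \mm_{\mathscr{B}_2}$. In particular $\mm_{\vec{c}|_I}/j_I^{-1}(V) = \mm_{\vec{c}|_I}/\mm_{\mathscr{B}_1}$ is locally free of rank $g_I$. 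The flattening stratum $\Gamma^{g_I,g_{I'}}_I$ represents exactly the condition that this quotient be locally free of rank $g_I$, and formation of $j_I^{-1}(V)$ commutes with base change here because $V$ splits. So the $S$-point $\bigvee(\mathscr{B}_1,\mathscr{B}_2)$ factors through $\Gamma^{g_I,g_{I'}}_I$. It also lies in $\Ter^g_{\vec{c}}$, so it factors through $\Ter^{g_I,g_{I'}}_I$.

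Applying the definitions, $\kappa_I(\bigvee(\mathscr{B}_1,\mathscr{B}_2)) = \OO_S\cdot 1\oplus j_I^{-1}(V) = \mathscr{B}_1$ and $\rho_{I'}(\bigvee(\mathscr{B}_1,\mathscr{B}_2)) = \OO_S\cdot 1\oplus \pi_{I'}(V)=\mathscr{B}_2$. By Yoneda, $\bigvee$ is therefore a section of $(\kappa_I,\rho_{I'})$. A morphism admitting a section is surjective, and composing with the projections gives surjectivity of $\kappa_I$ and $\rho_{I'}$. Finally, if the product is nonempty, its image under $\bigvee$ is a nonempty subset of $\Ter^{g_I,g_{I'}}_I$.

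The only point needing care is the scheme-theoretic (not merely pointwise) factorization through the flattening stratum. This is handled by the base-change compatibility noted above.
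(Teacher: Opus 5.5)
Your proposal is correct and is exactly the unpacking of definitions the paper has in mind; the paper gives no separate argument and calls the lemma immediate from the definitions. Two small refinements. The factorization through $\Gamma^{g_I,g_{I'}}_I$ is cleanest via the paper's equivalent condition that $\mm_{\vec{c}|_{I'}}/\pi_{I'}(V)=\mm_{\vec{c}|_{I'}}/\mm_{\mathscr{B}_2}$ be locally free of rank $g_{I'}$, because cokernels commute with base change automatically. Also, deducing surjectivity of $\kappa_I$ and $\rho_{I'}$ from the projections tacitly requires both factors of the product to be nonempty, a caveat the statement itself leaves implicit.
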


\medskip

Let us consider how the operations of restriction and contraction interact with the multiplicities of the conductor ideal. Since the codomain of restriction is $\Ter^{g_{I'}}_{\vec{c}|_{I'}}$, the branch conductances of a restriction are less than or equal to the initial conductances. The following example shows that they may strictly decrease.

\begin{example}
Let $B = k \oplus k(t_1^2 + t_2^2) \subseteq A^+_{(3,3)}$. We have $B \in \Ter^3_{=(3,3)}$. Then the restrictions of $B$ to each branch, namely $\rho_1(B) = k \oplus kt_1^2 \subseteq A^+_{3}$ and $\rho_2(B) = k \oplus kt_2^2 \subseteq A^+_{3}$, are both singularities with conductance 2.
\end{example}

However, for a contraction, the conductances are preserved.

\begin{proposition}
Let $\vec{c}$, $I$, $I'$, $g = g_I + g_{I'}$ be as above. Then
\[
  \kappa_I(\Ter^g_{=\vec{c}} \cap \Ter^{g_I, g_{I'}}_{I}) \subseteq \Ter^{g_I}_{=\vec{c}|_I}.
\]
\end{proposition}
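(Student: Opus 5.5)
The plan is to reduce everything to a membership criterion for the socle-type elements $t_i^{c_i-1}$ and then observe that $j_I$ preserves this membership.

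First, $\Ter^{g_I}_{=\vec{c}|_I}$ is an open subscheme of $\Ter^{g_I}_{\vec{c}|_I}$. So a morphism into $\Ter^{g_I}_{\vec{c}|_I}$ factors through it as soon as it does so on underlying topological spaces. It therefore suffices to check, for every field $k$ and every $k$-point $B$ of $\Ter^g_{=\vec{c}} \cap \Ter^{g_I,g_{I'}}_I$, that $\kappa_I(B)$ lies in no closed subscheme $\Ter^{g_I}_{\vec{c}|_I - e_i}$ with $i \in I$.

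Second, I will establish the criterion: for a $k$-point $B$ of $\Ter^g_{\vec{c}}$ and an index $i$ with $c_i \geq 2$, $B$ lies in the image of the closed immersion $\Ter^g_{(c_1,\ldots,c_i-1,\ldots,c_m)} \to \Ter^g_{\vec{c}}$ of Lemma \ref{lem:ter_basics}(iv) if and only if $t_i^{c_i-1} \in B$. The argument runs as follows. The quotient $\pi : A^+_{\vec{c}} \to A^+_{(c_1,\ldots,c_i-1,\ldots,c_m)}$ has kernel $k\, t_i^{c_i-1}$, and every point in the image has the form $\pi^{-1}(B')$, so it contains this kernel. Conversely, if $t_i^{c_i-1} \in B$, then $\pi(B)$ is a subalgebra of the same corank $g$ and $B = \pi^{-1}(\pi(B))$. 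When $c_i = 1$ the corresponding territory is declared empty, so there is nothing to check for that index.

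Third, apply the criterion to $\kappa_I(B) = k \oplus j_I^{-1}(\mm_B)$. Fix $i \in I$ with $c_i \geq 2$. Then $t_i^{c_i-1} \in j_I^{-1}(\mm_B)$ holds exactly when $j_I(t_i^{c_i-1}) = t_i^{c_i-1} \in \mm_B$. The right-hand condition is excluded because $B \in \Ter^g_{=\vec{c}}$. Hence $\kappa_I(B)$ avoids every $\Ter^{g_I}_{\vec{c}|_I - e_i}$, and so it lies in $\Ter^{g_I}_{=\vec{c}|_I}$.

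The main obstacle is the degenerate case $(g_I, |I|) = (0,1)$, where Definition \ref{def:ter_equals_c} uses a separate convention. Say $I = \{i\}$ and $g_I = 0$. Then $j_I^{-1}(\mm_B)$ is all of $\mm_{(c_i)}$, which contains $t_i^{c_i-1}$ whenever $c_i \geq 2$. By the argument above, $B$ would then contain $t_i^{c_i-1}$, contradicting $B \in \Ter^g_{=\vec{c}}$. This forces $c_i = 1$, and then $\kappa_I(B)$ is the unique point of $\Ter^0_{1}$, which is $\Ter^0_{=0}$ by convention. I expect this bookkeeping, including the harmless mismatch between the index $(1)$ and the convention's $0$, to be the only delicate part of the proof.
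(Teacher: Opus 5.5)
Your proposal is correct and follows the same route as the paper. Both arguments reduce to field-valued points using that $\Ter^{g_I}_{=\vec{c}|_I}$ is open in $\Ter^{g_I}_{\vec{c}|_I}$, then observe that $t_i^{c_i-1} \notin \mm_B$ forces $t_i^{c_i-1} \notin j_I^{-1}(\mm_B)$ for $i \in I$. Your explicit criterion (a point lies in the lower-conductance territory iff it contains $t_i^{c_i-1}$) and your handling of the degenerate case $(g_I,|I|)=(0,1)$, which forces $c_i = 1$, are details the paper leaves implicit rather than a different argument.
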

\begin{proof}
Since $\Ter^{g_I}_{=\vec{c}|_I}$ is the open complement of the lower conductance locus in $\Ter^{g_I}_{\vec{c}|_I}$, it suffices to check the claim
on field-valued points. Let
\[
  B = k \cdot 1 \oplus \mm_B \in (\Ter^g_{=\vec{c}} \cap \Ter^{g_I, g_{I'}}_{I})(k)
\]
for some field $k$. Since $B \in \Ter^g_{=\vec{c}}$, we have $t_i^{c_i - 1} \not\in \mm_{B}$ for each $i \in I$. Then $t_i^{c_i - 1} \not\in j_I^{-1}(\mm_B)$ for each $i \in I$, so $\kappa_I(B) \in \Ter^{g_I}_{=\vec{c}|_I}(k)$.
\end{proof}

A consequence is that the Gorenstein property is preserved under contracting a branch if and only if a simple numerical criterion holds.

\begin{corollary} \label{cor:gorenstein_contractions}
With notation as above, suppose $x$ is a geometric point of $\Ter^g_{=\vec{c}} \cap \Ter^{g_I,g_{I'}}_{I}$ and $c = 2(g + m - 1)$, so that $x$ corresponds to a Gorenstein singularity. Write $c_{I} = \sum_{i \in I} c_i$ and write $c_{I'} = \sum_{i \in I'} c_i$. Then:
\begin{enumerate}
  \item We have $c_{I'} \geq 2g_{I'} + 2|I'|$, with equality if and only if $\kappa_{I}(x)$ corresponds to a Gorenstein singularity.
  \item If $c_i = 2$ or $c_i = 3$, then the $i$th branch of the singularity corresponding to $x$ is smooth.
\end{enumerate}
\end{corollary}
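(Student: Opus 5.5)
The plan for (1) is to apply the conductance bounds of Lemma \ref{lem:c_bounds} to the contraction $\kappa_I(x)$ and then subtract from the Gorenstein equality $c = 2(g+m-1)$.

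By the preceding Proposition, $\kappa_I(x)$ is a geometric point of $\Ter^{g_I}_{=\vec{c}|_I}$. So it is a reduced singularity with $|I|$ branches, genus $g_I$, and branch conductances exactly $\vec{c}|_I$. Lemma \ref{lem:c_bounds} then gives
\[
  c_I \leq 2(g_I + |I| - 1),
\]
with equality if and only if $\kappa_I(x)$ is Gorenstein. Using $c_{I'} = c - c_I$, $g = g_I + g_{I'}$ and $m = |I| + |I'|$, this rearranges to
\[
  c_{I'} = 2(g+m-1) - c_I \geq 2(g+m-1) - 2(g_I+|I|-1) = 2g_{I'} + 2|I'|,
\]
and equality holds exactly when $c_I = 2(g_I + |I| - 1)$. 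That is the Gorenstein condition for $\kappa_I(x)$, which proves (1).

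The step I expect to need real care is the degenerate case $(g_I, |I|) = (0,1)$. Lemma \ref{lem:c_bounds} and Definition \ref{def:ter_equals_c} treat this case by a separate convention: a smooth point has conductor multiplicity $0$, not $1$. For example, for a node ($\vec{c} = (1,1)$, $g = 0$, $I = \{1\}$) the literal inequality $c_{I'} \geq 2g_{I'} + 2|I'|$ would read $1 \geq 2$, which is false. So I would check that the argument goes through when a contracted branch that becomes smooth is assigned conductance $0$ in $c_I$, as in Definition \ref{def:ter_equals_c}. Otherwise I would state (1) under the hypothesis $(g_I,|I|) \neq (0,1)$.

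For (2), take $I' = \{i\}$ and let $I$ be its complement. This needs $m \geq 2$: for $m = 1$ and $c_1 = 2$ the point is a cusp, which is not smooth, so I would add that hypothesis. Let $x$ lie in the stratum $\Ter^{g_I, g_{I'}}_I$; by Remark \ref{rem:contraction_strata} the strata cover $\Ter^g_{\vec{c}}$, so such $g_I, g_{I'}$ exist. Part (1) gives $c_i \geq 2g_{I'} + 2$. If $c_i \in \{2,3\}$ this forces $g_{I'} = 0$. By definition, $g_{I'}$ is the genus of the restriction $\rho_{I'}(x) \in \Ter^{g_{I'}}_{(c_i)}$, which is the $i$th branch of the singularity. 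A unibranch singularity of genus $0$ is the full algebra $A^+_{(c_i)}$, i.e. a smooth branch, which finishes the argument.
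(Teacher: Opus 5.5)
Your argument is the same as the paper's. You apply Lemma \ref{lem:c_bounds} to $\kappa_I(x)$, which lies in $\Ter^{g_I}_{=\vec{c}|_I}$ by the preceding Proposition, and subtract from $c=2(g+m-1)$; for (2) you take $I'=\{i\}$.

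Your caveats are correct, and they catch something the paper's proof passes over. The node ($\vec{c}=(1,1)$, $g=0$, $I=\{1\}$) really does violate (1), because the paper applies Lemma \ref{lem:c_bounds} to the smooth point $\kappa_I(x)$ with $c_I=1$. The ``conductance $0$'' convention cannot rescue the inequality, because $c_{I'}$ is still $c-\sum_{i\in I}c_i$. Whenever $(g_I,|I|)=(0,1)$, say $I=\{j\}$, the branch must have $c_j=1$: if $c_j\geq 2$, then $g_I=0$ puts $t_j^{c_j-1}$ in $B$, contradicting $x\in\Ter^g_{=\vec{c}}$. With $c_j=1$ one gets $c_{I'}=2g_{I'}+2|I'|-1$, so (1) fails outright. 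Your fallback hypothesis $(g_I,|I|)\neq(0,1)$ is therefore the correct fix. Likewise, $m\geq 2$ in (2) is already implicit in the paper, since $I$ and $I'$ must both be nonempty, and your cusp example for $m=1$ is right.

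There is one loose end. Once (1) carries that hypothesis, you must verify it when you invoke (1) in part (2). For $m\geq 3$ it holds automatically. For $m=2$ you need to exclude $g_I=0$ with $I=\{j\}$:
\begin{itemize}
\item By the argument above this forces $c_j=1$, so $c_i=c-1=2g+1$.
\item Since $c_i\in\{2,3\}$, this gives $c_i=3$ and $g=1$.
\item The only corank-one subalgebra of $A^+_{\vec{c}}\otimes k$ in that case is $k+kt_i^2$.
\item That subalgebra contains $t_i^{c_i-1}$, so it is not in $\Ter^1_{=\vec{c}}$.
\end{itemize}
The case is vacuous, and your proof of (2) goes through.
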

\begin{proof}
By Lemma \ref{lem:c_bounds}, $c_I \leq 2g_I + 2|I| - 2$, with equality if and only if $\kappa_I(x)$ corresponds to a Gorenstein singularity. Subtracting this from $c = 2g + 2m - 2$, we have
\[
  c_{I'} = c - c_I \geq 2g + 2m - 2 - (2g_I + 2|I| - 2) = 2g_{I'} + 2|I'|,
\]
with equality if and only if $\kappa_I(x)$ corresponds to a Gorenstein singularity. This proves (i).

For (ii), suppose $c_i = 2$ or $c_i = 3$. Set $I' = \{i\}$. Since $c_i \leq 3$ and $c_i \geq 2g_{I'} + 2$, it follows that $g_{I'} = 0$, i.e., the singularity obtained by restricting to the $i$th branch has genus 0. Therefore the $i$th branch is smooth.
\end{proof}

In particular, contracting a branch of a Gorenstein singularity with odd branch conductance never results in a Gorenstein singularity. On the other hand, if a Gorenstein singularity possesses a branch of conductance 2, then that branch must be smooth, and the contraction of this branch is necessarily Gorenstein of the same genus.
This structure is apparent in the classifications of Gorenstein singularities considered in \cite[Appendix A]{smyth_mstable}, \cite[Section 2]{battistella_mstable}, and \cite{battistella_genus3_sings}. In each case there are infinite families of Gorenstein singularities obtained by gluing in smooth branches with conductance 2; contracting smooth branches of conductance 2 results in Gorenstein singularities of the same genus with fewer branches.

\begin{example}
The generic elliptic Gorenstein 4-fold point restricts to an ordinary triple point on its first three branches. Ordinary triple points are not Gorenstein, so restriction of Gorenstein singularities to branches need not result in Gorenstein singularities.
\end{example}

\begin{example}
Let us compute $\Ter^2_{=(4,2)}$, the locus of Gorenstein singularities in $\Ter^2_{(4,2)}$.  More concretely, it is the open locus in $\Ter^2_{(4,2)}$ where $B \subseteq A^+_{\vec{c}}$ does not contain $t_1^3$ or $t_2$.

Suppose $B \subseteq A^+_{\vec{c}} \otimes k$ is a $k$-point of $\Ter^2_{=(4,2)}$ for some algebraically closed field $k$. Set $I = \{ 1 \}, g_I = 2, g_{I'} = 0$ so that $\kappa_I : \Gamma^{2,0}_{I} \to \Ter^2_4$. Since $c_2 = 2$, Corollary \ref{cor:gorenstein_contractions} implies that all geometric points of $\Ter^2_{=(4,2)}$ belong to $\Ter^{2,0}_{I}$. Then $\kappa_I(B) \in \Ter^2_4$, so we may write
\[
  \kappa_I(B) = k \cdot 1 \oplus k (a_1t_1 + a_2t_1^2 + a_3t_1^3).
\]
where $a_1, a_2, a_3 \in k$ and at least one of $a_1, a_2, a_3$ is nonzero.
Moreover, Corollary \ref{cor:gorenstein_contractions} implies $\kappa_I(B)$ is Gorenstein, so at least one of $a_1, a_2$ is nonzero. If $a_1 \neq 0$, then $a_1t_1 + a_2t_1^2 + a_3t_1^3$ generates all of $A^+_4$, contradicting that $\kappa_I(B)$ is closed under multiplication, so we may take $a_1 = 0, a_2 = 1$.
We extend the basis of $\kappa_I(B)$ to a basis of $B$ by adding one more element, which necessarily has a nonzero coefficient of $t_2$:
\[
  B = k \cdot 1 \oplus k (t_1^2 + a_3t_1^3) \oplus k (b_1t_1 + b_2t_1^2 + b_3t_1^3 + t_2).
\]
Since all $k$-points of $\Ter^2_{=(4,2)}$ are of this form, it will be an open subset of the chart of $\Ter^2_{(4,2)}$ parametrizing algebras of the form
\[
  B = k \cdot 1 \oplus k (a_1t_1 + t_1^2 + a_3t_1^3) \oplus k (b_1t_1 + b_3t_1^3 + t_2).
\]

Referring to the equations of Section \ref{ssec:ter_eqns}, we find that the corresponding chart of $\Ter^2_{(4,2)}$ is
\[
  \Spec \frac{\ZZ[a_1,a_3,b_1,b_3]}{(a_1^3, 2a_1^2, a_1^2a_3 - 2a_1, a_1b_1^2, a_3b_1^2, a_1^2b_1, b_1 - a_1a_3b_1)}.
\]
Since the Gorenstein locus is cut out by the condition that $B$ does not contain $t_1^3$ or $t_2$, we consider inverting $b_1$ and $b_3$. Since the result of inverting $b_1$ is empty,
we conclude
\[
  \Ter^2_{=(4,2)} \cong \Spec \frac{\ZZ[a_1,a_3, b_1, b_3, b_3^{-1}]}{(a_1^3, 2a_1^2, a_1^2a_3 - 2a_1, a_1b_1^2, a_3b_1^2, a_1^2b_1, b_1 - a_1a_3b_1)}.
\]

Reasoning similarly,
\[
  \Ter^2_{=4} \cong \Spec \frac{\ZZ[a_1,a_3]}{(a_1^3, 2a_1^2, a_1^2a_3 - 2a_1)}
\]
and $\kappa_I$ is induced by the obvious inclusion of rings. The reductions of $\Ter^2_{=(4,2)}$ and $\Ter^2_{=4}$ can be computed by setting $a_1, b_1$ to zero. We find that
\[
  (\Ter^2_{=(4,2)})_{red} \cong \Spec \ZZ[a_3, b_3, b_3^{-1}] \cong \AA^1 \times \GG_m, \quad (\Ter^2_{=4})_{red} \cong \Spec \ZZ[a_3] \cong \AA^1,
\]
and $(\kappa_I)_{red} : \AA^1 \times \GG_m \to \AA^1$ is the obvious projection.
\end{example}

\section{Gluing branches of singularities} \label{sec:isom_hilb}

A related perspective on multibranch singularities studied by Guevara in his thesis \cite{chris_thesis} is to think of them in terms of gluing subschemes of their branches together.

For example, consider a two-branch singularity $x$ with normalization $\nu : \tilde{C} \to C$. Completing at $x$, $\nu$ induces a subalgebra $B \subseteq k\llbracket t_1 \rrbracket \times k\llbracket t_2 \rrbracket$. Goursat's Lemma states that a subalgebra of $k\llbracket t_1 \rrbracket \times k\llbracket t_2 \rrbracket$ is equivalent to the data of a pair of subalgebras $B_i \subseteq k\llbracket t_i \rrbracket$, a pair of ideals $I_i \subseteq B_i$, and an isomorphism $\phi : B_1/I_1 \to B_2/I_2$. (See for example \cite{goursat_1, goursat_2}. We give our own statement below in Lemma \ref{lem:goursat}.) Geometrically, $B_1$ and $B_2$ correspond to unibranch singularities obtained by restricting to each branch of the singularity $x$; the ideals correspond to closed subschemes $Z_1$, $Z_2$ of the branches; and $\phi$ induces an isomorphism of $Z_1$ with $Z_2$. The original subalgebra $B$ is recoverable as the fiber product
\[
\begin{tikzcd}
  B \ar[rr] \ar[d] & & B_2 \ar[d] \\
  B_1 \ar[r] & B_1/I_1 \ar[r, "\phi"] \ar[ul, phantom, "\ulcorner", very near end]&  B_2/I_2.
\end{tikzcd}
\]
Taking spectra, the original singularity is obtained by gluing $\Spec B_1$ to $\Spec B_2$ along $Z_1 \cong Z_2$ using the isomorphism $\phi$.

There is a natural locally closed stratification of a territory on which the discrete invariants of the data are fixed, namely the $k$-dimensions of $k\llbracket t_i \rrbracket / B_i$ and the common $k$-dimension of $B_i/I_i$. Then subschemes $Z_1, Z_2$ are points of a Hilbert scheme, while the isomorphism corresponds to a point of an Isom scheme. Guevara used this observation \cite{chris_thesis} to construct ``Isom-Hilb'' stratifications of territories and to compute parametrizations of territories with $(g,m) = (1,2), (1,3), (2,2)$.

Since his thesis has not appeared as a journal article, we give a presentation of this Isom-Hilb perspective. We provide our own statements and proofs, since territories of curve singularities were only articulated for reduced schemes at the time that his thesis was written. The key insight that Goursat's Lemma can be used to interpret the resulting strata is due to Guevara.

\subsection{Isom-Hilb strata}

We begin by defining the strata of interest.

\begin{definition} \label{def:isom_hilb_strata}
Let $\vec{c} = (c_1, \ldots, c_m)$ be a vector of positive integers with sum $c$.
Let $I_1 \subseteq \{1, \ldots, m\}$ be a nonempty proper subset and let $I_2$ be its complement.
Let $g$ be a non-negative integer and let $g = g_1 + g_2 + \gamma$ be a partition of $g$ into non-negative integers. The \emphbf{Isom-Hilb stratum} $\mathrm{IH}^{g_1,g_2,\gamma}_{I_1,I_2}$ is the locally closed subscheme
\[
  \mathrm{IH}^{g_1,g_2,\gamma}_{I_1,I_2} \coloneqq \Ter^{g_1 + \gamma, g_2}_{I_1} \cap \Ter^{g_2 + \gamma, g_1}_{I_2}
\]
of $\Ter^g_{\vec{c}}$.
\end{definition}

We may think of $\IH^{g_1,g_2,\gamma}_{I_1,I_2}$ as the subscheme of $\Ter^g_{\vec{c}}$ parametrizing singularities whose restriction to $I_1$-branches has genus $g_1$ and whose restriction to $I_2$-branches has genus $g_2$, or equivalently, as the subscheme parametrizing singularities whose contraction to $I_1$-branches has genus $g_1 + \gamma$ and whose contraction to $I_2$-branches has genus $g_2 + \gamma$. We remark that the roles of $I_1$ and $I_2$ here are symmetric, unlike $I$ and $I'$ above.

\begin{example}
Consider $\Ter^1_{(2,2)} \cong \PP^1$ (see Examples \ref{ex:all_twos} and \ref{ex:tacnodes_restriction_contraction}). This territory decomposes into Isom-Hilb strata as
\begin{align*}
  \IH^{0,0,1}_{\{1\},\{2\}} &= \{ [a : b] \in \PP^1 \mid a \neq 0, b \neq 0 \} \quad \text{(tacnodes)}\\
  \IH^{1,0,0}_{\{1\},\{2\}} &= \{ [0 : 1] \} \quad \text{(cusp joined with a smooth branch)}\\
  \IH^{0,1,0}_{\{1\},\{2\}} &= \{ [1 : 0] \} \quad \text{(smooth branch joined with a cusp)}.
\end{align*}
\end{example}

We call the integer $\gamma$ the ``gluing genus.'' (In \cite{chris_thesis}, $\gamma$ is called the ``gluing dimension'' $\tau$.) It may be interpreted as:
\begin{itemize}
  \item the missing genus from the restriction to branches (since $\gamma = g - g_1 - g_2$);
  \item the excess genus in the two contractions of branches (since $\gamma = (g_1 + \gamma) + (g_2 + \gamma) - g$);
  \item the difference in genus between the contraction and restriction to the $I_1$-branches or $I_2$-branches (since $\gamma = (g_1+\gamma) - g_1 = (g_2 + \gamma) - g_2$).
\end{itemize}
 We give a fourth interpretation: the excess multiplicity of the intersection of the $I_1$-branches with the $I_2$-branches.

\begin{lemma} \label{lem:intersection_of_restrictions}
Abbreviate $\IH^{g_1,g_2,\gamma}_{I_1,I_2}$ by $\IH$ and write $\mathscr{B}$ for its universal element. The intersection $\mathcal{Z}$ of the closed subschemes $\SSpec{\IH}{\rho_{I_1}(\mathscr{B})}$ and $\SSpec{\IH}{\rho_{I_2}(\mathscr{B})}$ of $\SSpec{\IH}{\mathscr{B}}$ is locally free of rank $\gamma + 1$ over $\IH$.
\end{lemma}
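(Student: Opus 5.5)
The plan is to describe $\mathcal{Z}$ algebraically as the spectrum of the tensor product of the two restriction algebras over $\mathscr{B}$, identify that tensor product with an explicit quotient of $\mathscr{B}$, and compute its rank from the genus data. Write $\mm_1 = j_{I_1}(\mm_{\vec{c}|_{I_1}})$ and $\mm_2 = j_{I_2}(\mm_{\vec{c}|_{I_2}})$, so that $\mm_{\vec{c}} = \mm_1 \oplus \mm_2$.

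By the discussion following the definition of $\rho_{I'}$, the closed immersion $\SSpec{\IH}{\rho_{I_1}(\mathscr{B})} \to \SSpec{\IH}{\mathscr{B}}$ has ideal $J_2 := \mm_{\mathscr{B}} \cap \mm_2$. This is the kernel of $\pi_{I_1}$ restricted to $\mm_{\mathscr{B}}$. Symmetrically, the second closed subscheme has ideal $J_1 := \mm_{\mathscr{B}} \cap \mm_1$. Hence $\mathcal{Z} = \SSpec{\IH}{\mathscr{B}/(J_1 + J_2)}$.

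Since $\mm_1 \cap \mm_2 = 0$, the sum $J_1 + J_2$ is direct. This gives
\[
  \mathscr{B}/(J_1 \oplus J_2) \cong \OO_{\IH} \cdot 1 \oplus \mm_{\mathscr{B}}/(J_1 \oplus J_2).
\]
The key step is to show that $J_1$, $J_2$, and the quotient are locally free, and then compute ranks.

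On $\IH \subseteq \Ter^{g_1+\gamma, g_2}_{I_1}$, the definition of the stratum makes $\mm_{\vec{c}|_{I_1}}/J_1$ locally free of rank $g_1 + \gamma$. The module $\mm_{\vec{c}|_{I_1}}$ is free of rank $c_{I_1} - |I_1|$, where $c_{I_1} = \sum_{i \in I_1} c_i$. So $J_1$ is locally free of rank $c_{I_1} - |I_1| - g_1 - \gamma$, as the kernel of a surjection of locally free modules. Likewise, $J_2$ is locally free of rank $c_{I_2} - |I_2| - g_2 - \gamma$.

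Next I would check that $J_1 \oplus J_2 \to \mm_{\mathscr{B}}$ has locally free cokernel. The cokernel is identified with $\pi_{I_1}(\mm_{\mathscr{B}})/J_1$, because $\pi_{I_1}$ maps $\mm_{\mathscr{B}}$ onto $\pi_{I_1}(\mm_{\mathscr{B}})$ with kernel $J_2$. On $\Ter^{g_2+\gamma, g_1}_{I_2}$, the projection $\pi_{I_1}(\mm_{\mathscr{B}})$ has corank $g_1$ in $\mm_{\vec{c}|_{I_1}}$. More precisely, the flattening condition makes $\mm_{\vec{c}|_{I_1}}/\pi_{I_1}(\mm_{\mathscr{B}})$ locally free of rank $g_1$. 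This is the equivalence asserted in the definition of $\Gamma$: the quotient $\mm_{\vec{c}|_{I_1}}/\pi_{I_1}(\mm_{\mathscr{B}})$ is isomorphic to $\mm_{\vec{c}}/(\mm_{\mathscr{B}} + \mm_2)$, a quotient of a locally free module by a subbundle-type image.

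Consequently $\pi_{I_1}(\mm_{\mathscr{B}})/J_1$ sits as the kernel of
\[
  \mm_{\vec{c}|_{I_1}}/J_1 \twoheadrightarrow \mm_{\vec{c}|_{I_1}}/\pi_{I_1}(\mm_{\mathscr{B}}),
\]
a surjection between locally free modules of ranks $g_1+\gamma$ and $g_1$. So it is locally free of rank $\gamma$. Adding the summand $\OO_{\IH}\cdot 1$, the algebra $\mathscr{B}/(J_1+J_2)$ is locally free of rank $\gamma+1$, as claimed.

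As a sanity check, the ranks are consistent. We have $\operatorname{rank}\mm_{\mathscr{B}} = c - m - g$, while $\operatorname{rank}(J_1) + \operatorname{rank}(J_2) = c - m - g_1 - g_2 - 2\gamma$, and the difference is exactly $\gamma$.

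The main obstacle I expect is scheme-theoretic care rather than linear algebra, in two places:
\begin{itemize}
\item confirming that the ideal of $\SSpec{\IH}{\rho_{I_1}(\mathscr{B})}$ is exactly $J_2$ in families, which requires $\pi_{I_1}(\mm_{\mathscr{B}})$ to be a genuine locally free subsheaf, as guaranteed on the flattening stratum;
\item confirming that formation of $J_1$, $J_2$ commutes with base change on the stratum, which follows from local freeness of the relevant quotients.
\end{itemize}
With those in hand, $\mathcal{Z} = \SSpec{\IH}{\mathscr{B}/(J_1 + J_2)}$ is finite locally free of rank $\gamma + 1$ over $\IH$.
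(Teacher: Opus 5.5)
Your argument is correct, but it reaches local freeness of $\OO_{\mathcal{Z}}$ by a different route from the paper.

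\textbf{The paper's route.} The paper uses the sequence $0 \to J_2 \to \mm_{\mathscr{B}} \to \pi_{I_1}(\mm_{\mathscr{B}}) \to 0$, and its mirror image, to show that both ideals are locally free of the ranks you found. It then argues that their formation commutes with base change. Hence $0 \to (J_1 + J_2)\otimes \OO_S \to \mathscr{B} \otimes \OO_S \to \OO_{\mathcal{Z}} \otimes \OO_S \to 0$ stays exact for every $S \to \IH$. This gives flatness of $\OO_{\mathcal{Z}}$, hence local freeness (flat and finitely presented). The rank $\gamma + 1$ then comes from subtracting $\operatorname{rank}(J_1) + \operatorname{rank}(J_2)$ from $\operatorname{rank}\mathscr{B}$.

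\textbf{Your route.} You identify $\mm_{\mathscr{B}}/(J_1 \oplus J_2)$ with $\pi_{I_1}(\mm_{\mathscr{B}})/J_1$. You then exhibit it as the kernel of a surjection of locally free sheaves of ranks $g_1+\gamma$ and $g_1$. This gives local freeness of rank $\gamma$ at once, with no flatness or base-change step, and the ranks of $J_1, J_2$ become only a consistency check.

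\textbf{Shared input.} Both proofs use the same two stratum conditions. In particular, both use the ``projection'' form of the condition defining $\Ter^{g_2+\gamma, g_1}_{I_2}$: that $\mm_{\vec{c}|_{I_1}}/\pi_{I_1}(\mm_{\mathscr{B}})$ is locally free of rank $g_1$. The paper asserts this as equivalent in the definition of $\Gamma^{g_I,g_{I'}}_I$. Citing that equivalence is legitimate, though your remark about a ``subbundle-type image'' does not by itself prove it.

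\textbf{Your two listed obstacles.} Neither is a real obstacle. The ideal of $\SSpec{\IH}{\rho_{I_1}(\mathscr{B})}$ is $J_2$ simply because it is the kernel of the surjection $\mathscr{B} \to \rho_{I_1}(\mathscr{B})$. And once the quotient is locally free on $\IH$, compatibility with base change is automatic.

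\textbf{Comparison.} Your argument is shorter and more elementary, but asymmetric in $I_1, I_2$. The paper's is symmetric and computes both ideal ranks along the way.
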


\begin{proof}
Recall that the ideal of $\SSpec{\IH}{\rho_{I_1}(\mathscr{B})}$ in $\SSpec{\IH}{\mathscr{B}}$ is given by $\mathscr{I}_1 \coloneqq \mm_{\mathscr{B}} \cap j_{I_2}(\mm_{\vec{c}|_{I_2}})|_{\IH}$. Symmetrically, the ideal of $\SSpec{\IH}{\rho_{I_2}(\mathscr{B})}$ is $\mathscr{I}_2 \coloneqq \mm_{\mathscr{B}} \cap j_{I_1}(\mm_{\vec{c}|_{I_1}})|_{\IH}$. The ideal of $\mathcal{Z}$ is then $\mathscr{I}_1 + \mathscr{I}_2$. Since $j_{I_1}(\mm_{\vec{c}|_{I_1}}) \cap j_{I_2}(\mm_{\vec{c}|_{I_2}}) = 0$, the intersection of the two ideals $\mathscr{I}_1 \cap \mathscr{I}_2$ is zero.

For each $j = 1,2$, write $c_{I_j} = \sum_{k \in I_j} c_k$. Using the sequence
\[
  0 \to \mathscr{I}_1 \to \mm_{\mathscr{B}} \to \pi_{I_1}(\mm_{\mathscr{B}}) \to 0,
\]
we conclude that $\mathscr{I}_1$ is locally free of rank
\[
  (c - m - g) - (c_{I_1} - |I_1| - g_1) = c_{I_2} - |I_2| - g_2 - \gamma.
\]
Similarly, $\mathscr{I}_2$ is locally free of rank $c_{I_1} - |I_1| - g_1 - \gamma$.

Since $\rho_{I_1}(\mathscr{B})$ and $\rho_{I_2}(\mathscr{B})$ are flat over $\IH$, formation of $\mathscr{I}_1$ and $\mathscr{I}_2$ commute with base change. Thus, for any $S \to \IH$ we have a short exact sequence
\[
 0 \to (\mathscr{I}_1 + \mathscr{I}_2) \otimes S \to \mathscr{B} \otimes S \to \OO_{\mathcal{Z}} \otimes S \to 0.
\]
Then $\OO_{\mathcal{Z}}$ is flat and finitely presented as an $\OO_{\IH}$-module, hence locally free.

The rank of the first term of the short exact sequence is 
\[
  (c_{I_2} - |I_2| - g_2 - \gamma) + (c_{I_1} - |I_1| - g_1 - \gamma) = c - m - g - \gamma
\]
and the middle term has rank $c - m  - g + 1$. It follows that $\OO_{\mathcal{Z}}$ has rank $\gamma + 1$, as claimed.
\end{proof}

\subsection{Goursat's Lemma}

Goursat's lemma is ordinarily stated as a bijection between subobjects of a product and ``Goursat tuples.'' We prefer to think of multibranch singularities not in terms of the normalization $A_{\vec{c}}$, which is a product, but instead in terms of the seminormalization $A^+_{\vec{c}}$, which is itself a subobject of a product, namely $A_{\vec{c}}$. Our goal in this subsection is to establish a Goursat-style characterization of subalgebras of $A^+_{\vec{c}}$. We will do so using a functorial enhancement of Goursat's lemma for $\OO_S$-algebras. We start by defining the relevant categories.

\begin{definition}
Let $S$ be a scheme. A \emphbf{product-subalgebra} on $S$ consists of $\OO_S$-algebras $\mathscr{A}_1, \mathscr{A}_2$ and an $\OO_S$-subalgebra $\mathscr{B} \subseteq \mathscr{A}_1 \times \mathscr{A}_2$.
A \emphbf{morphism of product-subalgebras} on $S$ from $\mathscr{B}' \subseteq \mathscr{A}_1' \times \mathscr{A}_2'$ to $\mathscr{B} \subseteq \mathscr{A}_1 \times \mathscr{A}_2$ is a commutative diagram of $\OO_S$-algebras
\[
\begin{tikzcd}
  \mathscr{B}' \ar[r,hook] \ar[d, "f_\mathscr{B}"] & \mathscr{A}_1' \times \mathscr{A}_2' \ar[d, "f_1 \times f_2"] \\
  \mathscr{B} \ar[r,hook] & \mathscr{A}_1 \times \mathscr{A}_2
\end{tikzcd}
\]
where $f_i : \mathscr{A}_i' \to \mathscr{A}_i$ for $i = 1,2$.

Denote the category of product-subalgebras on $S$ by $\mathcal{PS}(S)$.
\end{definition}

\begin{definition}
A \emphbf{Goursat datum} $\mathbf{B}$ on $S$ consists of
\begin{enumerate}
  \item $\OO_S$-algebras $\mathscr{A}_1, \mathscr{A}_2$
  \item $\OO_S$-subalgebras $\mathscr{B}_1 \subseteq \mathscr{A}_1, \mathscr{B}_2 \subseteq \mathscr{A}_2$
  \item sheaves of ideals $\mathscr{I}_1 \subseteq \mathscr{B}_1, \mathscr{I}_2 \subseteq \mathscr{B}_2$
  \item an $\OO_S$-algebra isomorphism $\eta : \mathscr{B}_1 / \mathscr{I}_1 \to \mathscr{B}_2 / \mathscr{I}_2$
\end{enumerate}
We may omit the ambient algebras $\mathscr{A}_1, \mathscr{A}_2$ if they are clear from context.

A \emphbf{morphism of Goursat data} $\mathbf{B}' \to \mathbf{B}$ consists of a pair of $\OO_S$-algebra morphisms $(f_1 : \mathscr{A}_1' \to \mathscr{A}_1, f_2 : \mathscr{A}_2' \to \mathscr{A}_2)$ such that
\begin{enumerate}
  \item $f_i(\mathscr{B}_i') \subseteq \mathscr{B}_i$ for $i = 1,2$
  \item $f_i(\mathscr{I}_i') \subseteq \mathscr{I}_i$ for $i = 1,2$
  \item the diagram of $\OO_S$-algebras
  \[
    \begin{tikzcd}
      \mathscr{B}_1' / \mathscr{I}_1' \ar[r, "\eta'"] \ar[d] & \mathscr{B}_2' / \mathscr{I}_2' \ar[d] \\
      \mathscr{B}_1 / \mathscr{I}_1 \ar[r, "\eta"] & \mathscr{B}_2 / \mathscr{I}_2
    \end{tikzcd}
  \]
  commutes, where the vertical arrows are the morphisms induced by $f_1, f_2$.
\end{enumerate}

Denote the \emphbf{category of Goursat data on $S$} by $\mathcal{GD}(S)$.
\end{definition}

Now we may state Goursat's lemma for $\OO_S$-algebras.

\begin{lemma} \label{lem:goursat}
There is an isomorphism of categories
\[
  \Phi: \mathcal{PS}(S) \to \mathcal{GD}(S), \quad \Psi: \mathcal{GD}(S) \to \mathcal{PS}(S)
\]
defined by:
\begin{itemize}
 \item ($\Phi$ on objects) Given a product-subalgebra $\mathscr{B} \subseteq \mathscr{A}_1 \times \mathscr{A}_2$, its image under $\Phi$ has
\[
  \mathscr{B}_1 = \pi_1(\mathscr{B}), \quad \mathscr{B}_2 = \pi_2(\mathscr{B}), \quad \mathscr{I}_1 = \iota_1^{-1}(\mathscr{B}), \quad \mathscr{I}_2 = \iota_2^{-1}(\mathscr{B}),
\]
where $\pi_i(\mathscr{B})$ is the sheaf image,
and $\eta : \mathscr{B}_1 / \mathscr{I}_1 \to \mathscr{B}_2 / \mathscr{I}_2$ is given locally by taking $[b_1] \mapsto [b_2]$ where $b_2 \in \mathscr{B}_2$ is any element such that $(b_1, b_2) \in \mathscr{B}$.

\item ($\Phi$ on morphisms) Given a morphism
\[
\begin{tikzcd}
  \mathscr{B}' \ar[r,hook] \ar[d, "f_\mathscr{B}"] & \mathscr{A}_1' \times \mathscr{A}_2' \ar[d, "f_1 \times f_2"] \\
  \mathscr{B} \ar[r,hook] & \mathscr{A}_1 \times \mathscr{A}_2
\end{tikzcd}
\]
the corresponding morphism of Goursat data is given by $(f_1 : \mathscr{A}_1' \to \mathscr{A}_1, f_2 : \mathscr{A}_2' \to \mathscr{A}_2)$.

 \item ($\Psi$ on objects) Given a Goursat datum $\mathbf{B} = (\mathscr{A}_1, \mathscr{A}_2, \mathscr{B}_1, \mathscr{B}_2, \mathscr{I}_1, \mathscr{I}_2, \eta)$, we set $\Psi(\mathbf{B})$ to be the $\OO_S$-subalgebra $\mathscr{B}_1 \times_{\mathscr{B}_2/\mathscr{I}_2} \mathscr{B}_2$ of $\mathscr{A}_1 \times \mathscr{A}_2$, where
\[
   (\mathscr{B}_1 \times_{\mathscr{B}_2/\mathscr{I}_2} \mathscr{B}_2)(U)= \{ (b_1, b_2) \in \mathscr{B}_1(U) \times \mathscr{B}_2(U) \mid \eta([b_1]) = [b_2] \}.
\]

 \item ($\Psi$ on morphisms) Given a morphism $\mathbf{B}' \to \mathbf{B}$ of Goursat data $(f_1 : \mathscr{A}_1' \to \mathscr{A}_1, f_2 : \mathscr{A}_2' \to \mathscr{A}_2)$, the corresponding morphism is given by the same maps, with $f_\mathscr{B}$ equal to the restriction of $f_1 \times f_2$.
\end{itemize}
\end{lemma}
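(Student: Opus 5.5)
The plan is to verify directly that $\Phi$ and $\Psi$ are well-defined functors and then that $\Psi\circ\Phi$ and $\Phi\circ\Psi$ are the identity on objects and on morphisms. Since everything is a statement about subsheaves of $\OO_S$-algebras and all constructions (sheaf image, preimage, fiber product) are compatible with restriction to opens, I will check each claim on sections over an arbitrary open $U$. For images I first verify on stalks, or on local sections after sheafification, and then conclude for sheaves.

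\textbf{Well-definedness of $\Phi$ on objects.} The images $\mathscr{B}_i=\pi_i(\mathscr{B})$ are subalgebras because the $\pi_i$ are algebra maps. $\mathscr{I}_1=\iota_1^{-1}(\mathscr{B})$, with $\iota_1(a)=(a,0)$, is an ideal of $\mathscr{B}_1$: if $a\in\mathscr{I}_1$ and $b_1\in\mathscr{B}_1$, choose locally $(b_1,b_2)\in\mathscr{B}$; then $(b_1,b_2)(a,0)=(b_1a,0)\in\mathscr{B}$. Also $\mathscr{I}_1\subseteq\mathscr{B}_1$ since $(a,0)\in\mathscr{B}$.

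The key step is the well-definedness of $\eta$. If $(b_1,b_2)$ and $(b_1',b_2')$ lie in $\mathscr{B}$ with $b_1-b_1'\in\mathscr{I}_1$, then $(b_1-b_1',0)\in\mathscr{B}$. Subtracting gives $(0,b_2-b_2')\in\mathscr{B}$, so $b_2-b_2'\in\mathscr{I}_2$. Hence the relation $\{([b_1],[b_2])\}$ is the graph of a map. This map is additive, multiplicative and unital because $\mathscr{B}$ is a subalgebra. It is surjective because $\pi_2$ is surjective onto $\mathscr{B}_2$. It is injective by the symmetric argument. Since $\eta$ is first defined on local sections, I would define it on the presheaf quotients and note that it induces an isomorphism of the sheafified quotients.

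\textbf{Well-definedness of $\Psi$ and functoriality.} $\Psi(\mathbf{B})$ is a sheaf, being a fiber product of sheaves, and it is a subalgebra because $\eta$ and the quotient maps are algebra homomorphisms.

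On morphisms, given a morphism of product-subalgebras, we have $f_i(\pi_i(\mathscr{B}'))=\pi_i((f_1\times f_2)(\mathscr{B}'))\subseteq\mathscr{B}_i$. Likewise $(a,0)\in\mathscr{B}'$ implies $(f_1(a),0)\in\mathscr{B}$, and the square for $\eta$ commutes by applying $f_1\times f_2$ to a witnessing pair $(b_1,b_2)$.

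Conversely, given a morphism of Goursat data, $f_1\times f_2$ sends pairs with $\eta'[b_1]=[b_2]$ to pairs with $\eta[f_1b_1]=[f_2b_2]$ by commutativity of the square. So the restriction $f_\mathscr{B}$ exists, and it is unique because $\mathscr{B}\hookrightarrow\mathscr{A}_1\times\mathscr{A}_2$ is a monomorphism. Both constructions preserve identities and composition, since on morphisms they only record, or restrict, the pair $(f_1,f_2)$.

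\textbf{Mutual inverses.} On morphisms this is immediate: both categories have morphisms determined by the pair $(f_1,f_2)$, with the extra data either unique or a condition.

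For $\Psi\Phi(\mathscr{B})=\mathscr{B}$, the inclusion $\subseteq$ from $\mathscr{B}$ into the fiber product is clear. For the reverse inclusion, take $(b_1,b_2)$ with $\eta[b_1]=[b_2]$. Locally there is $(b_1,b_2')\in\mathscr{B}$ with $b_2-b_2'\in\mathscr{I}_2$, so $(b_1,b_2)=(b_1,b_2')+(0,b_2-b_2')\in\mathscr{B}$. Since $\mathscr{B}$ is a sheaf, local membership suffices.

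For $\Phi\Psi(\mathbf{B})=\mathbf{B}$, the projection of the fiber product onto $\mathscr{B}_1$ is surjective as a sheaf map: locally lift $\eta[b_1]$ to some $b_2$. The preimage $\iota_1^{-1}$ is $\{a\in\mathscr{B}_1:\eta[a]=0\}=\mathscr{I}_1$, using that $\eta$ is injective. The symmetric statements hold for the second factor, and the recovered $\eta$ agrees with the original one.

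I expect the main obstacle to be sheaf-theoretic bookkeeping rather than algebra. Sheaf images and quotients require local lifts, so I would work stalkwise throughout. Exactness of stalks makes every identity above reduce to the classical Goursat lemma for rings, while compatibility with restriction is automatic.
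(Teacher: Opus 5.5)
Your proposal is correct and follows the paper's proof essentially step for step. It uses the same local-lifting arguments for $\Psi\Phi(\mathscr{B})=\mathscr{B}$ (add a local section of $0\times\mathscr{I}_2$, then use that $\mathscr{B}$ is a subsheaf) and for $\pi_1(\Psi(\mathbf{B}))=\mathscr{B}_1$, and it handles morphisms by observing that both functors only record the pair $(f_1,f_2)$. The one cosmetic difference is that you check directly from the graph that $\eta$ is a well-defined algebra isomorphism, whereas the paper obtains $\eta$ from the isomorphism theorems $\mathscr{B}_1/\mathscr{I}_1\cong\mathscr{B}/(\mathscr{I}_1\times\mathscr{I}_2)\cong\mathscr{B}_2/\mathscr{I}_2$.
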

\begin{proof}
($\Phi$ is well-defined on objects)
Suppose $\mathscr{B}$ is an $\OO_S$-subalgebra of $\mathscr{A}_1 \times \mathscr{A}_2$. We set
\begin{align*}
  \mathscr{B}_1 = \pi_1(\mathscr{B}), \quad \mathscr{B}_2 = \pi_2(\mathscr{B}), \quad \mathscr{I}_1 = \iota_1^{-1}(\mathscr{B}), \quad \mathscr{I}_2 = \iota_2^{-1}(\mathscr{B}).
\end{align*}
Observe that
\[
  \mathscr{I}_1(U) = \{ a_1 \in \mathscr{A}_1(U) \mid (a_1, 0) \in \mathscr{B}(U) \}.
\]
It is straightforward to check that $\mathscr{I}_1$ is an ideal in $\mathscr{B}_1$, as
\[
  \mathscr{B}_1(U) = \{ b_1 \in \mathscr{A}_1(U) \mid \text{locally there exists } b_2 \in \mathscr{A}_2 \text{ s.t. } (b_1, b_2) \in \mathscr{B} \}.
\]
Similarly, $\mathscr{I}_2$ is a sheaf of ideals in $\mathscr{B}_2$. Next, observe that
\[
  \mathscr{B}_1 \cong \mathscr{B} / (0 \times \mathscr{I}_2) \quad \text{ and } \quad \mathscr{B}_2 \cong \mathscr{B} / (\mathscr{I}_1 \times 0).
\]
Applying standard isomorphism theorems,
\[
  \frac{{\mathscr{B}}_1}{\mathscr{I}_1} \cong \frac{{\mathscr{B}} / (0 \times \mathscr{I}_2)}{(\mathscr{I}_1 \times \mathscr{I}_2) / (0 \times \mathscr{I}_2)} \cong \frac{{\mathscr{B}}}{\mathscr{I}_1 \times \mathscr{I}_2} \cong \frac{{\mathscr{B}} / (\mathscr{I}_1 \times 0)}{(\mathscr{I}_1 \times \mathscr{I}_2) / (\mathscr{I}_1 \times 0)} \cong \frac{{\mathscr{B}}_2}{\mathscr{I}_2},
\]
with the isomorphism given left-to-right locally by $[b_1] \mapsto [b_2]$ where $b_2$ is any element of $\mathscr{A}_2$ such that $(b_1,b_2) \in \mathscr{B}$.
This is the isomorphism $\eta$.

($\Psi$ is well-defined on objects)
In the other direction, given a tuple $(\mathscr{B}_1, \mathscr{B}_2, \mathscr{I}_1, \mathscr{I}_2, \eta)$, it is straightforward to check that
\[
  (\mathscr{B}_1 \times_{\mathscr{B}_2/\mathscr{I}_2} \mathscr{B}_2)(U) = \{ (b_1, b_2) \in \mathscr{B}_1(U) \times \mathscr{B}_2(U) \mid \eta([b_1]) = [b_2] \}
\]
defines an $\OO_S$-subalgebra of $\mathscr{A}_1 \times \mathscr{A}_2$. (The essential point is that $\eta$ and $\mathscr{B}_i \to \mathscr{B}_i / \mathscr{I}_i$ are $\OO_S$-algebra morphisms.)

($\Psi \circ \Phi$ is the identity on objects)
Let $\mathscr{B}$ be an $\OO_S$-subalgebra of $\mathscr{A}_1 \times \mathscr{A}_2$.
Let $(\mathscr{B}_1, \mathscr{B}_2, \mathscr{I}_1, \mathscr{I}_2, \eta)$ be as above and let $U \subseteq S$ be an open subset. Observe that $\mathscr{B}(U) \subseteq (\mathscr{B}_1 \times_{\mathscr{B}_2/\mathscr{I}_2} \mathscr{B}_2)(U)$ since $(b_1, b_2) \in \mathscr{B}(U)$ implies $\eta([b_1]) = [b_2]$. To check the other containment,
suppose $(b_1, b_2) \in \mathscr{B}_1(U) \times \mathscr{B}_2(U)$ such that $\eta([b_1]) = [b_2]$. By construction of $\eta$,
there is a cover $\{ U_j \}_{j \in J}$ of $U$ and elements $b_{2,j}' \in \mathscr{A}_2(U_j)$ such that $(b_1|_{U_j}, b_{2,j}') \in \mathscr{B}$ and $[b_{2,j}'] = [b_2|_{U_j}]$. Then there exist some
$i_{j} \in \mathscr{I}_2(U_j)$ for each $j$ such that $b_{2,j}' + i_j = b_2|_{U_j}$. Then $(0,i_{j}) \in \mathscr{B}$ and $(b_1|_{U_j}, b_{2,j}') + (0,i_{j}) = (b_1|_{U_j}, b_2|_{U_j})$ belongs to $\mathscr{B}(U_j)$ for each $j$. Since $\mathscr{B}$ is a subsheaf of $\mathscr{A}_1 \times \mathscr{A}_2$, $(b_1, b_2) \in \mathscr{B}(U)$, so the other containment holds.
Thus $\mathscr{B} = \mathscr{B}_1 \times_{\mathscr{B}_2 / \mathscr{I}_2} \mathscr{B}_2$, as desired.

($\Phi \circ \Psi$ is the identity on objects)
For the other direction, let $(\mathscr{A}_1, \mathscr{A}_2, \mathscr{B}_1, \mathscr{B}_2, \mathscr{I}_1, \mathscr{I}_2, \eta)$ be a Goursat datum, and write $\mathscr{B}$ for $\mathscr{B}_1 \times_{\mathscr{B}_2/\mathscr{I}_2} \mathscr{B}_2$.
It is clear that $\iota_1^{-1}(\mathscr{B}) = \mathscr{I}_1$, and $\iota^{-1}_2(\mathscr{B}) = \mathscr{I}_2$. Clearly $\pi_1(\mathscr{B}) \subseteq \mathscr{B}_1$. For the other containment, let $U \subseteq S$ be an open subset and $b_1 \in \mathscr{B}_1(U)$ be a section. By construction of $\eta$, there is an open cover $\{U_j\}_{j \in J}$ of $U$ and elements $b_{2,j} \in \mathscr{B}_2(U_j)$ such that $\eta([b_1|_{U_j}]) = [b_{2,j}]$ for all $j$. Then $(b_1|_{U_j}, b_{2,j}) \in \mathscr{B}(U_j)$ for each $j$, so $b_1|_{U_j} \in \pi_1(\mathscr{B})(U_j)$ for each $j$. Then since $\pi_1(\mathscr{B})$ is a sheaf, $b_1 \in \pi_1(\mathscr{B})(U)$. Thus $\pi_1(\mathscr{B}) = \mathscr{B}_1$. Similarly, $\pi_2(\mathscr{B}) = \mathscr{B}_2$. Finally, let $\eta' : \mathscr{B}_1 / \mathscr{I}_1 \to \mathscr{B}_2 / \mathscr{I}_2$ be the map defined locally by taking $[b_1] \mapsto [b_2]$ where $b_2$ is an element of $\mathscr{B}_2$ such that $(b_1, b_2) \in \mathscr{B}$. Then by definition of
$\mathscr{B}$, $[b_2] = \eta([b_1])$. We conclude that $\eta' = \eta$, so the two maps are inverses on objects.

($\Phi$ is well-defined on morphisms)
To see that $\Phi$ is well-defined on morphisms, suppose that
\[
\begin{tikzcd}
  \mathscr{B}' \ar[r] \ar[d, "f_\mathscr{B}"] & \mathscr{A}_1' \times \mathscr{A}_2' \ar[d, "f_1 \times f_2"] \\
   \mathscr{B} \ar[r] & \mathscr{A}_1 \times \mathscr{A}_2
\end{tikzcd}
\]
is a morphism of product-subalgebras. Suppose $b_1' \in \pi_1(\mathscr{B}')(U)$ for some open subset $U$ of $S$. Then there is an open cover
$\{ U_j \}_{j \in J}$ of $U$ and elements $b_{2,j}' \in \mathscr{A}_2'(U_j)$ such that $(b_1'|_{U_j}, b_{2,j}') \in \mathscr{B}'(U_j)$. Then
$(f_1(b_1'|_{U_j}), f_2(b_{2,j}')) \in \mathscr{B}(U_j)$, so $f_1(b_1'|_{U_j})$ belongs to $\pi_1(\mathscr{B})(U_j)$ for each $j$. Then
$f_1(b_1') \in \pi_1(\mathscr{B})(U)$. It follows that $f_1(\pi_1(\mathscr{B}')) \subseteq \pi_1(\mathscr{B})$.
Next, if $b_1' \in \mathscr{I}_1'(U)$, then
$(b_1', 0) \in \mathscr{B}'(U)$, so $(f_1(b_1'),0) \in \mathscr{B}(U)$, whence $f_1(b_1') \in \mathscr{I}_1(U)$, so $f_1(\mathscr{I}_1') \subseteq \mathscr{I}_1$. Similarly, $f_2(\mathscr{B}_2') \subseteq \mathscr{B}_2$
and $f_2(\mathscr{I}_2') \subseteq \mathscr{I}_2$. Finally, we check that
\[
    \begin{tikzcd}
      \mathscr{B}_1' / \mathscr{I}_1' \ar[r, "\eta'"] \ar[d] & \mathscr{B}_2' / \mathscr{I}_2' \ar[d] \\
      \mathscr{B}_1 / \mathscr{I}_1 \ar[r, "\eta"] & \mathscr{B}_2 / \mathscr{I}_2
    \end{tikzcd}
\]
commutes. If $[b_1'] \in \mathscr{B}_1' / \mathscr{I}_1'$ and $(b_1', b_2') \in \mathscr{B}'$ is a local lift to $\mathscr{B}'$, then under the upper path to the lower right corner, $[b_1'] \mapsto [f_2(b_2')]$. This agrees with the image under the other path since $(f_1(b_1'), f_2(b_2')) \in \mathscr{B}$.

($\Psi$ is well-defined on morphisms)
To see that $\Psi$ is well-defined on morphisms, suppose $(f_1 : \mathscr{A}_1' \to \mathscr{A}_1, f_2 : \mathscr{A}_2' \to \mathscr{A}_2)$ is a morphism of Goursat data.
We need to check that $(f_1 \times f_2)(\mathscr{B}_1' \times_{\mathscr{B}_2'/\mathscr{I}_2'} \mathscr{B}_2') \subseteq \mathscr{B}_1 \times_{\mathscr{B}_2/\mathscr{I}_2} \mathscr{B}_2$. In other words, we need to check that if
$(b_1', b_2') \in \mathscr{B}_1' \times \mathscr{B}_2'$ satisfy $\eta'([b_1']) = [b_2']$, then $\eta([f_1(b_1')]) = [f_2(b_2')]$. This holds by definition of Goursat data morphism.

The constructions respect composition and identities since $\Psi$ and $\Phi$ both act on morphisms by the same pair of maps $(f_1,f_2)$. It is straightforward to check that $\Psi$ and $\Phi$ are inverses of each other on morphisms.
\end{proof}

Next, considering the image in $\mathcal{GD}(S)$ of inclusions of subalgebras in $\mathcal{PS}(S)$, we get a characterization of Goursat data of nested subalgebras of a product.

\begin{lemma} \label{lem:goursat_sub}
Let $S$ be a scheme and suppose $\mathscr{A}_1, \mathscr{A}_2$ are $\OO_S$-algebras. If $\mathscr{B}$ and $\mathscr{B}'$ are subalgebras of $\mathscr{A}_1 \times \mathscr{A}_2$ with corresponding Goursat data
  \[
    (\mathscr{B}_1, \mathscr{B}_2, \mathscr{I}_1, \mathscr{I}_2, \eta) \text{ and } (\mathscr{B}_1', \mathscr{B}_2', \mathscr{I}_1', \mathscr{I}_2', \eta')
  \]
  then $\mathscr{B}' \subseteq \mathscr{B}$ if and only if the following properties hold
  \begin{enumerate}
     \item[(a)] \label{part:goursat_sub1} $\mathscr{B}_1' \subseteq \mathscr{B}_1$ and $\mathscr{B}_2' \subseteq \mathscr{B}_2$
     \item[(b)] \label{part:goursat_sub2} $\mathscr{I}_1' \subseteq \mathscr{I}_1$ and $\mathscr{I}_2' \subseteq \mathscr{I}_2$
     \item[(c)] \label{part:goursat_sub3} the diagram
     \[
    \begin{tikzcd}
      \mathscr{B}_1' / \mathscr{I}_1' \ar[r, "\eta'"] \ar[d] & \mathscr{B}_2' / \mathscr{I}_2' \ar[d] \\
      \mathscr{B}_1 / \mathscr{I}_1 \ar[r, "\eta"] & \mathscr{B}_2 / \mathscr{I}_2
    \end{tikzcd}
  \]
  commutes, where the vertical arrows are induced by the inclusions.
  \end{enumerate}
\end{lemma}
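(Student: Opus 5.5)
The plan is to read this off from Lemma \ref{lem:goursat}. The inclusion $\mathscr{B}' \subseteq \mathscr{B}$ is a morphism in $\mathcal{PS}(S)$ whose ambient maps are the identities $(\id_{\mathscr{A}_1}, \id_{\mathscr{A}_2})$. Conversely, a morphism in $\mathcal{GD}(S)$ with identity ambient maps is exactly a triple of conditions (a), (b), (c). Since $\Phi$ and $\Psi$ act on morphisms by the same pair $(f_1, f_2)$, the equivalence should follow from the isomorphism of categories.

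For the forward direction, assume $\mathscr{B}' \subseteq \mathscr{B}$. The inclusion square with $f_1 = \id$, $f_2 = \id$ and $f_{\mathscr{B}}$ the inclusion commutes, so it is a morphism of product-subalgebras. Applying $\Phi$ gives a morphism of Goursat data $(\id,\id)$ from $\Phi(\mathscr{B}')$ to $\Phi(\mathscr{B})$. Unwinding the definition of a Goursat morphism then yields:
\begin{itemize}
\item (a), from $\id(\mathscr{B}_i') \subseteq \mathscr{B}_i$;
\item (b), from $\id(\mathscr{I}_i') \subseteq \mathscr{I}_i$;
\item (c), from the commuting square, whose vertical maps are induced by the identities and hence by the inclusions.
\end{itemize}
Since these conditions were already checked in the ``$\Phi$ is well-defined on morphisms'' step of Lemma \ref{lem:goursat}, I only need to quote it.

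For the converse, assume (a)--(c). Then $(\id_{\mathscr{A}_1}, \id_{\mathscr{A}_2})$ is by definition a morphism of Goursat data $\Phi(\mathscr{B}') \to \Phi(\mathscr{B})$. Applying $\Psi$ gives a morphism of product-subalgebras $\Psi\Phi(\mathscr{B}') \to \Psi\Phi(\mathscr{B})$ whose component $f_{\mathscr{B}}$ is the restriction of $\id \times \id$. By Lemma \ref{lem:goursat}, $\Psi \circ \Phi$ is the identity on objects, so these subalgebras are $\mathscr{B}'$ and $\mathscr{B}$ themselves. Hence the restriction of $\id \times \id$ maps $\mathscr{B}'$ into $\mathscr{B}$, that is, $\mathscr{B}' \subseteq \mathscr{B}$.

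The only point needing care is that well-definedness of $\Psi$ on morphisms really produces the containment $(f_1\times f_2)(\mathscr{B}'_1\times_{\mathscr{B}'_2/\mathscr{I}'_2}\mathscr{B}'_2)\subseteq \mathscr{B}_1\times_{\mathscr{B}_2/\mathscr{I}_2}\mathscr{B}_2$. That proof checked this directly. If one prefers to be explicit: take a local section $(b_1,b_2)\in\mathscr{B}'$. Then $\eta'([b_1])=[b_2]$ holds in $\mathscr{B}'_2/\mathscr{I}'_2$. By (a) we have $b_i\in\mathscr{B}_i$, and by (c) we get $\eta([b_1])=[b_2]$ in $\mathscr{B}_2/\mathscr{I}_2$, so $(b_1,b_2)\in\mathscr{B}$. (Condition (b) is needed only to make the vertical maps in (c) well defined.) I expect no genuine obstacle; the argument is bookkeeping.
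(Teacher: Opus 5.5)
Your proposal is correct and follows essentially the same route as the paper: the forward direction applies $\Phi$ to the inclusion morphism with identity ambient maps. The converse applies $\Psi$ to the Goursat morphism $(\id_{\mathscr{A}_1},\id_{\mathscr{A}_2})$, with $\Psi\circ\Phi=\id$ on objects identifying the resulting product-subalgebras with $\mathscr{B}'$ and $\mathscr{B}$; your local-section check spells out a step the paper leaves implicit.
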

\begin{proof}
The forward direction is clear from Lemma \ref{lem:goursat}. Conversely, if the three properties hold, then $(\id_{\mathscr{A}_1}, \id_{\mathscr{A}_2})$
defines a morphism of Goursat data from $(\mathscr{B}_1', \mathscr{B}_2', \mathscr{I}_1', \mathscr{I}_2', \eta')$ to $(\mathscr{B}_1, \mathscr{B}_2, \mathscr{I}_1, \mathscr{I}_2, \eta)$. Applying $\Psi$, there is a morphism $(\mathscr{B}' \subseteq \mathscr{A}_1 \times \mathscr{A}_2) \to (\mathscr{B} \subseteq \mathscr{A}_1 \times \mathscr{A}_2)$ given by the identity map on $\mathscr{A}_1 \times \mathscr{A}_2$, whence $\mathscr{B}' \subseteq \mathscr{B}$.
\end{proof}

In the particular case of $A^+_{\vec{c}} \subseteq A^+_{\vec{c}|_{I_1}} \times A^+_{\vec{c}|_{I_2}}$, the criteria of the previous lemma simplify somewhat.

\begin{lemma} \label{lem:glued_subalgebras}
Let $\vec{c} = (c_1, \ldots, c_m)$ be a vector of positive integers.
Let $I_1 \subseteq \{1, \ldots, m\}$ be a nonempty proper subset and let $I_2$ be its complement.
Let $S$ be a scheme and $\mathscr{B}$ be an $\OO_S$-subalgebra of $A^+_{\vec{c}|_{I_1}}|_S \times A^+_{\vec{c}|_{I_2}}|_S$ with corresponding Goursat datum $(\mathscr{B}_1, \mathscr{B}_2, \mathscr{I}_1, \mathscr{I}_2, \eta)$. Write $\mm_{\mathscr{B}_j} \coloneqq \mm_{\vec{c}|_{I_j}} \cap \mathscr{B}_j$ for $j = 1,2$.

Then $\mathscr{B}$ factors through $A^+_{\vec{c}}|_S$ if and only if the following properties hold
\begin{enumerate}
  \item[(i)] $\mathscr{I}_1 \subseteq \mm_{\mathscr{B}_1}$ and $\mathscr{I}_2 \subseteq \mm_{\mathscr{B}_2}$.
  \item[(ii)] $\eta( \mm_{\mathscr{B}_1} / \mathscr{I}_1) = \mm_{\mathscr{B}_2} / \mathscr{I}_2$.
\end{enumerate}
\end{lemma}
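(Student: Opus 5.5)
The plan is to work directly with the explicit description of $A^+_{\vec{c}}$ inside the product. Writing $A_j = A^+_{\vec{c}|_{I_j}}$ with $A_j = \OO_S \cdot 1 \oplus \mm_{\vec{c}|_{I_j}}$, we have
\[
A^+_{\vec{c}}|_S = \{ (\lambda + x_1, \lambda + x_2) : \lambda \in \OO_S,\ x_j \in \mm_{\vec{c}|_{I_j}} \} = \OO_S\cdot(1,1) \oplus \mm_{\vec{c}|_{I_1}} \times \mm_{\vec{c}|_{I_2}}.
\]
Equivalently, $A^+_{\vec{c}}$ is the Goursat subalgebra $\Psi$ of the datum $(A_1, A_2, \mm_{\vec{c}|_{I_1}}, \mm_{\vec{c}|_{I_2}}, \eta_0)$, where $\eta_0 : A_1/\mm \to A_2/\mm$ is the identity of $\OO_S$. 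The key observation is that this makes the statement a special case of Lemma \ref{lem:goursat_sub}, with $\mathscr{B}' = \mathscr{B}$ and the role of $\mathscr{B}$ there played by $A^+_{\vec{c}}$. I would check that $\Phi(A^+_{\vec{c}})$ is this datum: the projections are surjective onto $A_j$, and $\iota_j^{-1}(A^+_{\vec{c}}) = \mm_{\vec{c}|_{I_j}}$ since $(a,0)$ lies in $A^+_{\vec{c}}$ exactly when $a$ has constant term $0$.

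Applying Lemma \ref{lem:goursat_sub}, $\mathscr{B} \subseteq A^+_{\vec{c}}$ holds if and only if three conditions hold. Condition (a), $\mathscr{B}_j \subseteq A_j$, is automatic. Condition (b), $\mathscr{I}_j \subseteq \mm_{\vec{c}|_{I_j}}$, is equivalent to (i), because $\mathscr{I}_j \subseteq \mathscr{B}_j$ and so $\mathscr{I}_j \subseteq \mm_{\vec{c}|_{I_j}} \cap \mathscr{B}_j = \mm_{\mathscr{B}_j}$. Condition (c) is the commutativity of the square with $\eta$ on top and $\eta_0 = \id$ on the bottom. Assuming (i), the vertical maps are $\mathscr{B}_j/\mathscr{I}_j \to A_j/\mm_{\vec{c}|_{I_j}} \cong \OO_S$, the "constant term" maps $\epsilon_j$, whose kernels are $\mm_{\mathscr{B}_j}/\mathscr{I}_j$. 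So (c) says exactly that $\epsilon_2 \circ \eta = \epsilon_1$.

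It then remains to show that, given (i), the condition $\epsilon_2\circ\eta = \epsilon_1$ is equivalent to (ii).

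For the forward direction, $\epsilon_2\circ\eta=\epsilon_1$ implies $\eta(\ker\epsilon_1)=\ker\epsilon_2$, because $\eta$ is an isomorphism: it maps $\ker\epsilon_1$ into $\ker\epsilon_2$, and $\eta^{-1}$ maps $\ker \epsilon_2$ into $\ker \epsilon_1$ since $\epsilon_1\circ\eta^{-1}=\epsilon_2$. This gives (ii).

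For the converse, suppose (ii) holds. Each $\mathscr{B}_j$ contains $1$, so $\mathscr{B}_j/\mathscr{I}_j = \OO_S\cdot[1] \oplus \mm_{\mathscr{B}_j}/\mathscr{I}_j$, with $\epsilon_j$ the projection onto the first summand. Since $\eta$ is an $\OO_S$-algebra map, $\eta([1])=[1]$. Together with (ii), this shows $\eta$ respects the decompositions, and hence $\epsilon_2\eta=\epsilon_1$.

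I expect the main obstacle to be bookkeeping rather than any real difficulty. Specifically, I need to make sure that the decomposition $\mathscr{B}_j = \OO_S\cdot 1 \oplus \mm_{\mathscr{B}_j}$ and its quotient by $\mathscr{I}_j$ are valid as sheaves. This follows from $A_j = \OO_S \oplus \mm$ together with $1 \in \mathscr{B}_j$ and $\mathscr{I}_j \subseteq \mm_{\mathscr{B}_j}$. I also need to make sure that the identification of the bottom row of the square with $\id_{\OO_S}$ is correct. A fallback, if Lemma \ref{lem:goursat_sub} proves awkward to apply, is to argue directly. Elements $(b_1,b_2)\in\mathscr{B}$ lie in $A^+_{\vec{c}}$ iff $\epsilon(b_1)=\epsilon(b_2)$. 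Using $\eta([b_1])=[b_2]$, this is exactly $\epsilon_2\eta=\epsilon_1$ applied to $[b_1]$, while (i) comes from applying it to $(a,0)\in\mathscr{B}$.
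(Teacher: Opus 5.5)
Your proposal is correct and is essentially the paper's proof: you identify the Goursat datum of $A^+_{\vec{c}}$ as $(A^+_{\vec{c}|_{I_1}}, A^+_{\vec{c}|_{I_2}}, \mm_{\vec{c}|_{I_1}}, \mm_{\vec{c}|_{I_2}}, \id_{\OO_S})$, specialize Lemma~\ref{lem:goursat_sub}, note that (a) is automatic and (b) is (i), and match (c) with (ii) by comparing the kernels of the two maps to $\OO_S$. The only cosmetic difference is in (i)+(ii) $\Rightarrow$ (c): the paper argues that the two compositions have the same kernel and so differ by an $\OO_S$-algebra automorphism of $\OO_S$, which must be the identity, whereas you use the splitting $\mathscr{B}_j/\mathscr{I}_j = \OO_S\cdot[1] \oplus \mm_{\mathscr{B}_j}/\mathscr{I}_j$ together with $\eta([1])=[1]$, which is the same argument made explicit.
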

\begin{proof}
We drop restrictions to $S$ for readability.
The Goursat datum corresponding to $A^+_{\vec{c}}$ in $A^+_{\vec{c}|_{I_1}} \times A^+_{\vec{c}|_{I_2}}$ is
\[
  (A^+_{\vec{c}|_{I_1}}, A^+_{\vec{c}|_{I_2}}, \mm_{\vec{c}|_{I_1}}, \mm_{\vec{c}|_{I_2}}, \theta)
\]
where $\theta$ is the unique algebra isomorphism $A^+_{\vec{c}|_{I_1}} / \mm_{\vec{c}|_{I_1}} \to A^+_{\vec{c}|_{I_2}} / \mm_{\vec{c}|_{I_2}}$ (unique since
both are isomorphic to $\OO_S$).

We now show that in this case properties (a), (b), (c) of Lemma \ref{lem:goursat_sub} are equivalent to (i), (ii) of this lemma.

((a) always holds) It is automatic that $\mathscr{B}_1 \subseteq A^+_{\vec{c}|_{I_1}}$ and $\mathscr{B}_2 \subseteq A^+_{\vec{c}|_{I_2}}$.

((i)=(b)) Immediate.

((b)+(c) $\Rightarrow$ (ii)) Supposing property (i)=(b) holds, consider the square
\[
\begin{tikzcd}
  \mathscr{B}_1 / \mathscr{I}_1 \ar[r,"\eta"] \ar[d] & \mathscr{B}_2 / \mathscr{I}_2 \ar[d] \\
  A^+_{\vec{c}|_{I_1}} / \mm_{\vec{c}|_{I_1}} \ar[r, "\theta"] & A^+_{\vec{c}|_{I_2}} / \mm_{\vec{c}|_{I_2}}.
\end{tikzcd}
\]
If (c) holds, the square commutes, so the kernel of both paths from the upper-left to lower-right of the square are the same, implying property (ii) of this Lemma.

((i)+(ii) $\Rightarrow$ (c)) Conversely, if property (ii) holds, then both paths have the same kernel, so the First Isomorphism Theorem implies that the two compositions differ by an $\OO_S$-algebra automorphism of $A^+_{\vec{c}|_{I_2}}/\mm_{\vec{c}|_{I_2}}$. Since the latter is isomorphic to $\OO_S$, the only such automorphism is the identity, so the square commutes, and (c) holds.
\end{proof}

\subsection{The functor of points of Isom-Hilb strata}

First we need one quick definition.

\begin{definition}
Given an $S$-point $\mathscr{B}$ of $\Ter^g_{\vec{c}}$, denote by $\sigma_{\vec{c}} : S \to \SSpec{S}{\mathscr{B}}$ the section induced by
the composition $\mathscr{B} \to \mathscr{B} / (\mm_{\vec{c}}|_S \cap \mathscr{B}) \overset{\sim}{\longrightarrow} \OO_S$. We call $\sigma_{\vec{c}}$ the \emphbf{canonical section}.
\end{definition}

We are now ready to give the modular interpretation of the Isom-Hilb strata that motivates their name. Essentially, a point of $\IH^{g_1,g_2,\gamma}_{I_1,I_2}$ is a point of $\Ter^{g_1}_{\vec{c}|_{I_1}}$, a point of $\Ter^{g_2}_{\vec{c}|_{I_2}}$, and a choice of how to glue them. The gluing must be compatible with how the canonical section of $\Spec A^+_{\vec{c}|_{I_1}}$ is glued to that of $\Spec A^+_{\vec{c}|_{I_2}}$
to form $\Spec A^+_{\vec{c}}$. The choice of gluing is a Goursat datum.

\begin{definition}
Let $\vec{c}, I_1, I_2, g_1, g_2, \gamma$ be as in Definition \ref{def:isom_hilb_strata}. The \emphbf{Isom-Hilb functor} is the functor
\[
  \mathcal{IH}^{g_1,g_2,\gamma}_{I_1,I_2} : \Sch^{op} \to \Set
\]
defined by:
\begin{enumerate}
  \item Given a scheme $S$
  we take $\mathcal{IH}^{g_1,g_2,\gamma}_{I_1,I_2}(S)$ to be the set of tuples $(\mathscr{B}_1, \mathscr{B}_2, \pi_1 : Z_1 \to S, \pi_2 : Z_2 \to S, \phi)$ where:
  \begin{itemize}
      \item $\mathscr{B}_j \in \Ter^{g_j}_{\vec{c}|_{I_j}}(S)$ for $j = 1,2$;
      \item $\pi_j : Z_j \to S$ is a flat family of closed subschemes of $\SSpec{S}{\mathscr{B}_j} \to S$ which are locally free of rank $\gamma + 1$ and contain the canonical section for $j = 1, 2$; and
      \item $\phi : Z_1 \to Z_2$ is an isomorphism over $S$ preserving the canonical section.
  \end{itemize}
  \item Given a morphism $f : S' \to S$, we take $\mathcal{IH}^{g_1,g_2,\gamma}_{I_1,I_2}(f)$ to be the obvious pullback.
\end{enumerate}
\end{definition}

Finally, we verify that each Isom-Hilb stratum is a representing scheme for the corresponding Isom-Hilb functor.

\begin{theorem} \label{thm:isom_hilb}
The scheme $\IH^{g_1,g_2,\gamma}_{I_1,I_2}$ represents the functor $\mathcal{IH}^{g_1,g_2,\gamma}_{I_1,I_2}$.
\end{theorem}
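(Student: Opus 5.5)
We construct mutually inverse natural transformations between the functor of points of $\IH \coloneqq \IH^{g_1,g_2,\gamma}_{I_1,I_2}$ and $\mathcal{IH}^{g_1,g_2,\gamma}_{I_1,I_2}$. Throughout we regard $A^+_{\vec{c}}$ as a subalgebra of $A^+_{\vec{c}|_{I_1}} \times A^+_{\vec{c}|_{I_2}}$ via $(\pi_{I_1}, \pi_{I_2})$, so that Lemma \ref{lem:goursat} and Lemma \ref{lem:glued_subalgebras} apply.

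\emph{From $\IH$ to $\mathcal{IH}$.} Given an $S$-point $\mathscr{B}$ of $\IH$, apply $\Phi$ of Lemma \ref{lem:goursat}. This yields the following data.
\begin{enumerate}
\item The projections $\mathscr{B}_j = \pi_{I_j}(\mathscr{B})$, which are $\rho_{I_j}(\mathscr{B}) \in \Ter^{g_j}_{\vec{c}|_{I_j}}(S)$ by the definition of the strata.
\item The ideals $\mathscr{I}_1 = \mm_{\mathscr{B}} \cap j_{I_1}(\mm_{\vec{c}|_{I_1}})$ and $\mathscr{I}_2 = \mm_{\mathscr{B}} \cap j_{I_2}(\mm_{\vec{c}|_{I_2}})$, viewed as ideals of $\mathscr{B}_1$ and $\mathscr{B}_2$ respectively.
\item The isomorphism $\eta : \mathscr{B}_1/\mathscr{I}_1 \to \mathscr{B}_2/\mathscr{I}_2$.
\end{enumerate}
Set $Z_j = \SSpec{S}{\mathscr{B}_j/\mathscr{I}_j}$ and let $\phi = \SSpec{}{\eta}^{-1}$. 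The rank computation in the proof of Lemma \ref{lem:intersection_of_restrictions} shows that $\mathscr{I}_j$ is locally free, and that $\mathscr{B}_j/\mathscr{I}_j$ is locally free of rank $\gamma+1$; indeed $\mathscr{B}_j/\mathscr{I}_j \cong \mathscr{B}/(\mathscr{I}_1 + \mathscr{I}_2) = \OO_{\mathcal{Z}}$. The base change statements needed for functoriality follow from the same flatness argument, since formation of $\mathscr{I}_j$ commutes with base change on the flattening strata. Lemma \ref{lem:glued_subalgebras} (i) gives $\mathscr{I}_j \subseteq \mm_{\mathscr{B}_j}$, so $Z_j$ contains the canonical section. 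Part (ii) of that lemma says $\eta$ carries $\mm_{\mathscr{B}_1}/\mathscr{I}_1$ to $\mm_{\mathscr{B}_2}/\mathscr{I}_2$, so $\phi$ preserves the canonical section.

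\emph{From $\mathcal{IH}$ to $\IH$.} Given a tuple $(\mathscr{B}_1,\mathscr{B}_2,Z_1,Z_2,\phi)$, take $\mathscr{I}_j$ to be the ideal of $Z_j$ in $\mathscr{B}_j$ and let $\eta = (\phi^\sharp)^{-1}$. Apply $\Psi$ to get $\mathscr{B} = \mathscr{B}_1 \times_{\mathscr{B}_2/\mathscr{I}_2} \mathscr{B}_2$. By Lemma \ref{lem:glued_subalgebras}, $\mathscr{B}$ lies in $A^+_{\vec{c}}|_S$: condition (i) holds because $Z_j$ contains the canonical section, and condition (ii) holds because $\phi$ preserves that section. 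We must also check that $\mathscr{B}$ is an $S$-point of $\Ter^g_{\vec{c}}$ lying in the stratum.

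First, the sequence
\[
0 \to \mathscr{B} \to \mathscr{B}_1 \oplus \mathscr{B}_2 \to \mathscr{B}_2/\mathscr{I}_2 \to 0
\]
is exact, with the last map $(b_1,b_2)\mapsto \eta[b_1]-[b_2]$ surjective. Hence $\mathscr{B}$ is locally free of rank
\[
(c_{I_1}-|I_1|+1-g_1)+(c_{I_2}-|I_2|+1-g_2)-(\gamma+1) = c-m+1-g.
\]
It then remains to see that the quotient $A^+_{\vec{c}}/\mathscr{B}$ is locally free, i.e.\ that $\mathscr{B} \to A^+_{\vec{c}}$ stays injective after base change. Since all the pieces are locally free and the sequence above is stable under base change (it is a split-type sequence of vector bundles), its formation commutes with pullback. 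Injectivity on fibers then follows from the field case, which is Goursat's lemma pointwise.

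Second, $\mathscr{B}$ must lie in the stratum, i.e.\ $\pi_{I_j}(\mathscr{B}) = \mathscr{B}_j$ should have corank $g_j$ and $j_{I_j}^{-1}(\mathscr{B}) = \mathscr{I}_j$ should have the right rank; both follow from $\Phi\circ\Psi = \id$. Locally freeness of the quotients by $\mathscr{I}_j$ in $\mm_{\vec{c}|_{I_j}}$ follows from $\mathscr{B}_j$ and $\mathscr{B}_j/\mathscr{I}_j$ being locally free with locally free cokernel. This puts $\mathscr{B}$ in both flattening strata defining $\Ter^{g_1+\gamma,g_2}_{I_1}\cap\Ter^{g_2+\gamma,g_1}_{I_2}$.

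\emph{Inverse and naturality.} Lemma \ref{lem:goursat} shows that the two constructions are mutually inverse. Naturality in $S$ holds because both constructions commute with pullback, granted the base-change compatibilities above.

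The main obstacle is precisely these base-change and local-freeness checks. Goursat's lemma is stated for sheaves on a fixed $S$, while a territory point requires a locally free cokernel, and the stratum conditions are flattening conditions. I expect to handle this uniformly by working locally on $S$, where everything is a finite free module, and using that a map of finite locally free modules which is injective on all fibers is a split injection with locally free cokernel.
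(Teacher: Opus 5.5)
Your proposal is correct and follows the paper's proof. Both directions run through Goursat's $\Phi$ and $\Psi$, with Lemma \ref{lem:intersection_of_restrictions} giving the rank $\gamma+1$ and Lemma \ref{lem:glued_subalgebras} handling the canonical-section conditions. The one difference is how local freeness of $A^+_{\vec{c}}|_S/\mathscr{B}$ of rank $g$ is obtained: the paper reads it off the extension $0 \to \mm_{\vec{c}|_{I_2}}/\mathscr{I}_2 \to A^+_{\vec{c}}|_S/\mathscr{B} \to A^+_{\vec{c}|_{I_1}}|_S/\mathscr{B}_1 \to 0$ of locally free sheaves, which makes your rank count of $\mathscr{B}$ and fiberwise-injectivity step (also valid) unnecessary.
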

\begin{proof}
To unburden the notation, write $\IH$ for $\IH^{g_1,g_2,\gamma}_{I_1,I_2}$ and $\mathcal{IH}$ for $\mathcal{IH}^{g_1,g_2,\gamma}_{I_1,I_2}$.

Suppose first that $\mathscr{B} \in \IH(S)$. Let $(\mathscr{B}_1, \mathscr{B}_2, \mathscr{I}_1, \mathscr{I}_2, \eta)$ be the corresponding Goursat datum. Note that $\mathscr{B}_j = \pi_j(\mathscr{B})$ is the image of $\mathscr{B}$
under the restriction maps $\rho_{I_j}$. So by definition of $\IH$, we have $\mathscr{B}_j \in\Ter^{g_j}_{\vec{c}|_{I_j}}$.

Now let $Z_j = V(\mathscr{I}_j)$. By the proof of Goursat's Lemma, both $Z_1$ and $Z_2$ are isomorphic to $Z = V(\mathscr{I}_1 \times 0 + 0 \times \mathscr{I}_2)$. By Lemma \ref{lem:intersection_of_restrictions}, $Z$ is locally free of rank $\gamma + 1$.

Now let $\phi : Z_1 \to Z_2$ be the inverse of the isomorphism associated to $\eta : \mathscr{B}_1 / \mathscr{I}_1 \to \mathscr{B}_2 / \mathscr{I}_2$. By Lemma \ref{lem:glued_subalgebras}, each $Z_j$ contains the canonical section and $\phi$ takes the canonical section of $\SSpec{S}{\mathscr{B}_1}$ to the canonical section of $\SSpec{S}{\mathscr{B}_2}$.

Altogether, we have an element $(\mathscr{B}_1, \mathscr{B}_2, Z_1, Z_2, \phi)$ of $\mathcal{IH}(S)$.

In the other direction, suppose $(\mathscr{B}_1, \mathscr{B}_2, Z_1, Z_2, \phi)$ is an element of $\mathcal{IH}(S)$.
By Goursat's lemma, there is a corresponding subalgebra $\mathscr{B} = \mathscr{B}_1 \times_{\mathscr{B}_2/\mathscr{I}_2} \mathscr{B}_2$ of $A^+_{\vec{c}|_{I_1}}|_S \times A^+_{\vec{c}|_{I_2}}|_S$. By Lemma \ref{lem:glued_subalgebras}, it factors through $A^+_{\vec{c}}|_S$. By definition of $\mathcal{IH}$, its restrictions $\mathscr{B}_1 = \rho_{I_1}(\mathscr{B})$ and $\mathscr{B}_2 = \rho_{I_2}(\mathscr{B})$ have coranks $g_1, g_2$.

In order to have a point of $\IH$, it only remains to check that $\Delta = A^+_{\vec{c}}|_S / \mathscr{B}$ is locally free of rank $g = g_1 + g_2 + \gamma$.
Consider the short exact sequence
\[
  0 \to \mm_{\mathscr{B}_2} / \mathscr{I}_2 \to \mm_{\vec{c}|_{I_2}} / \mathscr{I}_2 \to \mm_{\vec{c}|_{I_2}} / \mm_{\mathscr{B}_2} \to 0 
\]
The term $\mm_{\mathscr{B}_2} / \mathscr{I}_2$ is locally free of rank $\gamma$ since $Z_2$ is locally free of rank $\gamma + 1$ and contains the canonical section. The last term is locally free of rank $g_2$. Therefore $\mm_{\vec{c}|_{I_2}} / \mathscr{I}_2$ is locally free of rank $g_2 + \gamma$. Next consider the short exact sequence
\[
  0 \to \mm_{\vec{c}|_{I_2}} / \mathscr{I}_2 \to A^+_{\vec{c}}|_S / \mathscr{B} \to A^+_{\vec{c}|_{I_1}}|_S / \mathscr{B}_1 \to 0.
\]
Since the first and last terms are locally free of ranks $g_2 + \gamma$ and $g_1$, respectively, the middle term is locally free of rank $g = g_1 + g_2 + \gamma$.
We conclude $\mathscr{B} \in \IH(S)$.

The two maps constructed are a restriction of the isomorphism of the Goursat lemma, so it is clear that they are inverse to each other. The constructions are clearly functorial.
\end{proof}

\section{The torus action, limits, and vanishing sequences}
\label{sec:torus_action}

There is an action of a torus $T$ on $A^+_{\vec{c}}$ given by scaling the coordinates. This induces an action of $T$ on $\Ter^g_{\vec{c}}$ which we will study in this section. We will focus on the limits of families of subalgebras induced by 1-parameter subgroups of $T$ and their relationship with graded subalgebras and vanishing sequences. As a consequence we will deduce a criterion for when there exist subalgebras with certain vanishing sequences, and we recover Ishii's stratification by numerical monoids as a special case.

\begin{definition}
Fix an integer $m$ and $\vec{c} \in \ZZ_{\geq 1}^m$ a vector of conductances. Let $T = (\GG_m)^m$. There is a natural action of $T$ on $A^+_{\vec{c}}$ given on $S$-points by
\[
   (\lambda_1, \ldots, \lambda_m) \cdot f(t_1,\ldots,t_m) = f(\lambda_1t_1, \ldots, \lambda_mt_m).
\]
Write $\phi_{(\lambda_1, \ldots, \lambda_m)}$ for the automorphism of $A^+_{\vec{c},S}$ given by $f \mapsto (\lambda_1,\ldots,\lambda_m) \cdot f$.
There is an induced action of $T$ on $\Grass(c - m + 1 - g, A^+_{\vec{c}})$ and $\Ter^g_{\vec{c}}$ given by
\[
  (\lambda_1, \ldots, \lambda_m) \cdot \mathscr{B} =  \phi_{(\lambda_1,\ldots, \lambda_m)}(\mathscr{B}),
\]
well-defined on $\Ter^g_{\vec{c}}$ since $\phi_{(\lambda_1,\ldots,\lambda_m)}$ is an $\OO_S$-algebra automorphism.
\end{definition}

It is straightforward to verify that the action of $T$ is equivariant with respect to the operations of Section \ref{sec:functoriality}.

\begin{proposition}
Let $\vec{c} = (c_1,\ldots, c_m)$ be a vector of positive integers with sum $c$. Let $I \subseteq \{ 1, \ldots, m \}$ be a subset of the indices and let $I' = \{1, \ldots, m\} - I$ be its complement, such that both $I$ and $I'$ are nonempty. Let $g$ be a non-negative integer and $g_I + g_{I'} = g$ be a partition of $g$ into non-negative integers.
Write $\pi_I : (\GG_m)^m \to (\GG_m)^{|I|}$ and $\pi_{I'} : (\GG_m)^m \to (\GG_m)^{|I'|}$ for the natural projections. Then:
\begin{enumerate}
  \item $\Ter^{g_I, g_{I'}}_I$ is $T$-invariant.
  \item For any $\mathscr{B} \in \Ter^{g_I, g_{I'}}_I(S)$, and $\vec{\lambda} \in T(S)$,
  \[
    \kappa_I(\vec{\lambda} \cdot \mathscr{B}) = \pi_I(\vec{\lambda}) \cdot \kappa_I(\mathscr{B})
  \]
  \item For any $\mathscr{B} \in \Ter^{g_I, g_{I'}}_I(S)$, and $\vec{\lambda} \in T(S)$,
  \[
    \rho_{I'}(\vec{\lambda} \cdot \mathscr{B}) = \pi_{I'}(\vec{\lambda}) \cdot \rho_{I'}(\mathscr{B})
  \]
  \item For any $\mathscr{B}_{I} \in \Ter^{g_I}_{\vec{c}|_I}(S), \mathscr{B}_{I'} \in \Ter^{g_{I'}}_{\vec{c}|_{I'}}(S)$, and $\vec{\lambda} \in T(S)$, we have
  \[
    \left(\pi_I(\vec{\lambda}) \cdot \mathscr{B}_I\right) \vee \left(\pi_{I'}(\vec{\lambda}) \cdot \mathscr{B}_{I'}\right) = \vec{\lambda} \cdot \left(\mathscr{B}_I \vee \mathscr{B}_{I'}\right)
  \]
\end{enumerate}
\end{proposition}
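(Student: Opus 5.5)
The whole argument rests on one observation: for $\vec{\lambda} \in T(S)$, the automorphism $\phi_{\vec{\lambda}}$ of $A^+_{\vec{c}}|_S$ respects the branch decomposition $\mm_{\vec{c}} = \bigoplus_i \mm_{(c_i)}$. It scales $t_i^k$ by $\lambda_i^k$, so it preserves each summand $\mm_{(c_i)}|_S$ and acts on it only through $\lambda_i$. Consequently $\phi_{\vec{\lambda}}$ commutes with the projections and inclusions:
\[
  \pi_{I'} \circ \phi_{\vec{\lambda}} = \phi_{\pi_{I'}(\vec{\lambda})} \circ \pi_{I'}, \qquad \phi_{\vec{\lambda}} \circ j_I = j_I \circ \phi_{\pi_I(\vec{\lambda})},
\]
and the same holds with $I$ and $I'$ interchanged. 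Here $\phi_{\pi_I(\vec{\lambda})}$ denotes the torus action on $\mm_{\vec{c}|_I}$. I would verify these on the basis monomials $t_i^k$, which is immediate. All four parts then reduce to these two identities together with the fact that $\phi_{\vec{\lambda}}$ is an isomorphism.

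For (i), I would first show that the stratum $\Gamma^{g_I,g_{I'}}_I \subseteq \Grass(c-m-g,\mm_{\vec{c}})$ is $T$-invariant. Since $\phi_{\vec{\lambda}}$ is bijective and commutes with $j_I$,
\[
  j_I^{-1}(\phi_{\vec{\lambda}}(V)) = \phi_{\pi_I(\vec{\lambda})}(j_I^{-1}(V)).
\]
Thus $\phi_{\pi_I(\vec{\lambda})}$ induces an isomorphism $\mm_{\vec{c}|_I}/j_I^{-1}(V) \cong \mm_{\vec{c}|_I}/j_I^{-1}(\phi_{\vec{\lambda}} V)$. It follows that the flattening condition (locally free of rank $g_I$) holds for $V$ if and only if it holds for $\vec{\lambda}\cdot V$, after any base change. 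So the subfunctor $\Gamma^{g_I,g_{I'}}_I$ is preserved by the action. The territory $\Ter^g_{\vec{c}}$ is already $T$-invariant, so their intersection $\Ter^{g_I,g_{I'}}_I$ is invariant as well.

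This step is the main point requiring care. The flattening stratum is characterized by a functorial property, and invariance has to be argued on $S$-points for arbitrary $S$. The argument works because the isomorphism of quotients is compatible with pullback.

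Parts (ii) and (iii) are then direct computations on $S$-points. For (iii), I would write
\[
  \pi_{I'}(\phi_{\vec{\lambda}}(\mm_{\mathscr{B}})) = \phi_{\pi_{I'}(\vec{\lambda})}(\pi_{I'}(\mm_{\mathscr{B}})),
\]
which is exactly the maximal ideal of $\pi_{I'}(\vec{\lambda})\cdot\rho_{I'}(\mathscr{B})$. For (ii), I would use the displayed identity for $j_I^{-1}$ above, applied to $V = \mm_{\mathscr{B}}$. For (iv), the join is
\[
  \OO_S \oplus j_I(\mm_{\mathscr{B}_I}) \oplus j_{I'}(\mm_{\mathscr{B}_{I'}}).
\]
Applying $\phi_{\vec{\lambda}}$ summandwise and using $\phi_{\vec{\lambda}} \circ j_I = j_I \circ \phi_{\pi_I(\vec{\lambda})}$, together with the analogue for $I'$ and the fact that $\phi_{\vec{\lambda}}$ fixes $1$, gives the claimed identity.
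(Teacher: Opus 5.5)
Your proposal is correct and takes the approach the paper has in mind: the paper gives no written proof, only remarking that equivariance is straightforward to verify, and the natural check is your observation that $\phi_{\vec{\lambda}}$ preserves each branch summand and hence commutes with $j_I$ and $\pi_{I'}$. Your treatment of (i), via the induced isomorphism of quotients $\mm_{\vec{c}|_I}/j_I^{-1}(V) \cong \mm_{\vec{c}|_I}/j_I^{-1}(\vec{\lambda}\cdot V)$ compatible with base change, correctly handles the only point that needs any care.
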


\begin{definition}
Fix an integer $m$ and $\vec{c} \in \ZZ_{\geq 1}^m$ a vector of conductances. Let $\iota_\gamma : \GG_m \to T$ be any 1-parameter subgroup and let $\gamma : \ZZ^m \to \ZZ$ be the corresponding morphism of cocharacter lattices. The \emphbf{$\gamma$-grading} on $A^+_{\vec{c}}$ is the
grading of $A^+_{\vec{c}}$ obtained by taking $t_i^j$ to have degree $\gamma(j e_i)$ where $e_i$ is the $i$th standard basis vector.

Given an integer $d$, we set $I^\gamma_d$ to be the subspace of $A^+_{\vec{c}}$ spanned by the elements of $\gamma$-degree $\geq d$. Note that $I^\gamma_d$ is an ideal of $A^+_{\vec{c}}$ for all $d > 0$, but not necessarily for $d \leq 0$.

Denote by $\mathscr{B}$ the universal subalgebra of $\mathscr{A}^+_{\vec{c}} \coloneqq A^+_{\vec{c}} \otimes \OO_{\Ter^g_{\vec{c}}}$, and denote by calligraphic $\mathcal{I}^\gamma_d$ the pullback of $I^\gamma_d$ to $\mathscr{A}^+_{\vec{c}}$.
Using Fitting ideals \cite[Tag 05P8]{stacks-project}, we let the \emphbf{$\gamma$-gap stratification} of $\Ter^g_{\vec{c}}$ be the
universal locally closed stratification of $\Ter^g_{\vec{c}}$
with respect to which the quotient modules $\mathscr{B} \cap \mathcal{I}^\gamma_d / \mathscr{B} \cap \mathcal{I}^\gamma_{d+1}$
are locally free for all integers $d$. (This is a finitary condition since only finitely many of the quotients are non-zero.)
The strata $\mathcal{Z}^\gamma_{(k_d)_{d \in \ZZ}}$ are indexed by the ranks
\[
  k_d \coloneqq \mathrm{rank} (\mathscr{B} \cap \mathcal{I}^\gamma_d / \mathscr{B} \cap \mathcal{I}^\gamma_{d+1}).
\]

If $B$ is an $S$-point of $\mathcal{Z}^\gamma_{(k_d)}$, we say $B$ has $\gamma$-\emphbf{vanishing sequence} $(k_d)_{d \in \ZZ}$ and $\gamma$-\emphbf{degree} $\sum_d k_d d$.
The $\gamma$-\emphbf{gap sequence} of $B$ is the sequence of coranks $(\ell_d)_{d \in \ZZ}$ where
\[
  \ell_d = \mathrm{rank} (\mathcal{I}^\gamma_d / \mathscr{B} \cap \mathcal{I}^\gamma_d).
\]

If $B$ is an $S$-point of $\mathcal{Z}^\gamma_{(k_d)}$ such that $B$ is
free, then an $\OO_S$-basis $\{ b_{d,1}, \ldots, b_{d, k_d} : d \in \ZZ \}$ of $B$ is said
to be $\gamma$-normal if and only if $b_{d,1}, \ldots, b_{d, k_d}$
belong to $\mathcal{I}^\gamma_d|_S$ and descend to a basis of $B \cap \mathcal{I}^\gamma_d|_S / B \cap \mathcal{I}^\gamma_{d+1}|_S$ for all $d \in \ZZ$.

For each definition of the form $\gamma$-X above, we will take X to mean $\gamma$-X where $\gamma$ is the sum map $(a_1,\ldots,a_m)\mapsto \sum_i a_i$. In particular, degree has its usual interpretation as total degree.
\end{definition}

\begin{example}
Recall that $\Ter^1_{(2,2)} \cong \PP^1$ with a point $[a : b] \in \PP^1$ corresponding to the algebra $\OO_S + \OO_S(at_1 + bt_2)$.
Let
\[
  \gamma_1(x,y) = x, \quad \gamma_2(x,y) = y
\]
be functions from $\ZZ^2 \to \ZZ$. There are only two possible vanishing sequences associated to these functions, namely:
\[
  k^{(1)}_d = \begin{cases} 2 & \text{ if }d = 0 \\
                            0 & \text{ else}
              \end{cases}
  \quad \text{ and } \quad
  k^{(2)}_d = \begin{cases} 1 & \text{ if }d \in \{ 0,1\} \\
                            0 & \text{ else.}
  \end{cases}
\]
The associated strata are
\begin{align*}
  \mathcal{Z}^{\gamma_1}_{(k_d^{(1)})} = \{ [a : b] \in \PP^1 \mid b \neq 0 \}, \quad &\mathcal{Z}^{\gamma_1}_{(k_d^{(2)})} = \{ [1 : 0] \} \\
  \mathcal{Z}^{\gamma_2}_{(k_d^{(1)})} = \{ [a : b] \in \PP^1 \mid a \neq 0 \}, \quad &\mathcal{Z}^{\gamma_2}_{(k_d^{(2)})} = \{ [0 : 1] \}.
\end{align*}
The stratification associated to the standard grading is trivial
\[
  \mathcal{Z}_{(k_d^{(2)})} = \PP^1
\]
\end{example}

\begin{remark}
The rank of $\mathcal{I}^\gamma_{d_0} \cap \mathscr{B}$ at a point of $\mathcal{Z}^{\gamma}_{(k_d)}$ is $\sum_{d \geq d_0} k_d$. Given a $\ZZ$-indexed sequence of integers $(l_d)$, let us say $(k_d) \preceq (l_d)$ if $\sum_{d \geq d_0} k_d \leq \sum_{d \geq d_0} l_d$ for all integers $d_0$. Then since ranks of coherent sheaves are upper-semicontinuous,
\[
  \overline{\mathcal{Z}^{\gamma}_{(k_d)}} \subseteq \bigcup_{(k_d) \preceq (l_d)} \mathcal{Z}^{\gamma}_{(l_d)}.
\]
\end{remark}

\begin{definition}
Let $B$ be a $k$-point of $\Ter^{g}_{\vec{c}}$ (resp. $\Grass(c - \delta, A^+_{\vec{c}})$), where $k$ is a field (and $\delta = g + m - 1$). The \emphbf{$\gamma$-limit of $B$}, $\lim_\gamma B$, is the $k$-point of $\Ter^g_{\vec{c}}$ (resp. $\Grass(c - \delta, A^+_{\vec{c}})$) obtained by taking the limit as $\lambda \to 0$ of $\lambda \cdot_{\gamma} B$. (More precisely, one views this as a family over $\GG_m$, completes to a family over $\AA^1$ using properness of $\Ter^g_{\vec{c}}$ (resp. $\Grass(c - \delta, A^+_{\vec{c}})$), then restricts to the fiber over 0.)
\end{definition}

\begin{remark}
The $\gamma$-limits and orbits are not invariants of isomorphism types of singularities. For example, both $k\llbracket t^2, t^5 \rrbracket \subseteq k\llbracket t \rrbracket$ and $k\llbracket t^2 + t^3, t^5 \rrbracket \subseteq k\llbracket t \rrbracket$ correspond to ramphoid cusps, but the point of $\Ter^2_{4}$ corresponding to the first is fixed by the $\GG_m$ action while the second is not. ($\lambda \cdot (k \oplus k(t^2 + t^3)) = k \oplus k(t^2 + \lambda t^3)$.) We should think that the $\gamma$-limits and $\GG_m$-orbits are invariants of singularities with a normalization by $\prod_{i = 1}^m k\llbracket t_i \rrbracket$.
\end{remark}

We will be able to compute $\gamma$-limits explicitly using a $\gamma$-normal basis. The following lemma assures us that we can always find one.

\begin{lemma}
If $B$ is an $S$-point of $\mathcal{Z}^\gamma_{(k_d)}$, then Zariski locally $B$ admits a $\gamma$-normal basis.
\end{lemma}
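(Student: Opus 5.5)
The plan is to build the basis degree by degree from the top of the filtration downward, using local freeness of the graded pieces at each step. Only finitely many $d$ matter. The $\gamma$-grading of $A^+_{\vec{c}}$ takes finitely many values. Hence $\mathcal{I}^\gamma_d = A^+_{\vec{c}}$ for $d \ll 0$ and $\mathcal{I}^\gamma_d = 0$ for $d \gg 0$. The filtration
\[
  B = B \cap \mathcal{I}^\gamma_{d_{\min}}|_S \supseteq B \cap \mathcal{I}^\gamma_{d_{\min}+1}|_S \supseteq \cdots \supseteq B \cap \mathcal{I}^\gamma_{d_{\max}+1}|_S = 0
\]
is therefore finite. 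On $\mathcal{Z}^\gamma_{(k_d)}$, by definition of the flattening stratification, each successive quotient $Q_d \coloneqq B \cap \mathcal{I}^\gamma_d|_S / B \cap \mathcal{I}^\gamma_{d+1}|_S$ is locally free of rank $k_d$.

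First, I would show by descending induction on $d$ that each $F_d \coloneqq B \cap \mathcal{I}^\gamma_d|_S$ is locally free, of rank $\sum_{d' \geq d} k_{d'}$. For $d = d_{\max}+1$ we have $F_d = 0$. In the inductive step, $0 \to F_{d+1} \to F_d \to Q_d \to 0$ is exact, and an extension of finite locally free modules is finite locally free. Two small points need care here. The first is that $B \cap \mathcal{I}^\gamma_d|_S$ means the pullback of the universal sheaf intersection along $S \to \mathcal{Z}^\gamma_{(k_d)}$. The second is that this intersection is compatible with base change on the stratum. Both follow from the flatness of all the $Q_d$ and the same induction, in the spirit of the base-change argument in the proof of Lemma~\ref{lem:intersection_of_restrictions}. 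I expect this bookkeeping to be the only real obstacle, and it is a mild one.

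Next, I would pass to a Zariski open cover of $S$ on which every $Q_d$ is free; this is possible because there are finitely many $d$. On each affine open $U = \Spec R$ of such a cover, the sequences $0 \to F_{d+1} \to F_d \to Q_d \to 0$ split, since $Q_d$ is free, hence projective. For each $d$, choose elements $b_{d,1}, \ldots, b_{d,k_d} \in F_d(U) \subseteq \mathcal{I}^\gamma_d|_U$ lifting an $R$-basis of $Q_d(U)$.

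Finally, I would check that these elements form a basis of $B|_U$. By descending induction, $\{ b_{d',j} : d' \geq d \}$ is a basis of $F_d(U)$. The inductive step uses the split exact sequence: a free basis of the submodule $F_{d+1}(U)$, together with lifts of a basis of the free quotient $Q_d(U)$, gives a basis of $F_d(U)$. Taking $d = d_{\min}$ gives a basis of $B|_U$. This basis is $\gamma$-normal by construction: $b_{d,j}$ lies in $\mathcal{I}^\gamma_d|_U$, and the $b_{d,j}$ descend to a basis of $Q_d$ for every $d$.
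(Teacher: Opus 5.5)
Your proposal is correct and follows essentially the same route as the paper: pass to a Zariski cover on which the finitely many graded quotients $Q_d$ are all free, lift bases of each $Q_d$, and run a descending induction on $d$ to see that the lifts form a basis of $B$. Your preliminary step showing each $F_d$ is locally free, and your base-change remark, are bookkeeping the paper omits; the first is not needed, because the basis induction already shows directly that $F_d(U)$ is free.
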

\begin{proof}
Working Zariski locally, we may assume that each of the quotients $B \cap \mathcal{I}^\gamma_{d}|_S / B \cap \mathcal{I}^\gamma_{d+1}|_S$ is free for each $d$. Set $b_{d,1}, \ldots, b_{d, k_d} \in B$ to a lift of a basis
for $B \cap \mathcal{I}^\gamma_{d}|_S / B \cap \mathcal{I}^\gamma_{d+1}|_S$ for each $d$. Since $B$ is finite dimensional, we have that $B \cap \mathcal{I}^\gamma_{d} = 0$ for $d$ sufficiently large and $B \cap \mathcal{I}^\gamma_{d} = B$ for $d$ sufficiently small. A descending induction argument on $d$ shows that the $b_{d, i}$'s form a
basis of $B$.
\end{proof}

\begin{lemma} \label{lem:gamma_limits}
For any $k$-point $B$ of $\Ter^g_{\vec{c}}$, we have
\begin{enumerate}
  \item The limit $\lim_\gamma B$ is $\gamma$-graded, invariant under the $\gamma$-weighted action of $\GG_m$, has the same $\gamma$-vanishing sequence as $B$, and is equal to $B$ if and only if $B$ is also $\gamma$-graded.
  \item If $\gamma_1, \gamma_2 : \ZZ^m \to \ZZ$ are two choices of grading, and $B$ is $\gamma_1$-graded, then $\lim_{\gamma_2} B$ is also $\gamma_1$-graded, and has the same $\gamma_1$-vanishing sequence as $B$.
  \item If $\{ b_{d,1}, \ldots, b_{d, k_d} : d \in \ZZ \}$ is a $\gamma$-normal basis of $B$, then a $\gamma$-normal basis of $\lim_\gamma B$ is given by taking the minimal degree summands of each $b_{d,j}$.
  \end{enumerate}
\end{lemma}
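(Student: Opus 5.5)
The plan is to prove part (iii) first, by writing down the limit explicitly, and then read off (i) and (ii) from that description. Take a $\gamma$-normal basis $\{b_{d,j}\}$ of $B$; over a field it exists by the preceding lemma. Since $b_{d,j} \in I^\gamma_d$ and its image in $B \cap I^\gamma_d / B\cap I^\gamma_{d+1}$ is nonzero, we can write
\[
  b_{d,j} = \beta_{d,j} + (\text{terms of $\gamma$-degree} > d),
\]
where $\beta_{d,j}$ is the homogeneous component of degree exactly $d$. The images of the $b_{d,j}$ for fixed $d$ are independent modulo $I^\gamma_{d+1}$, so $\beta_{d,1},\ldots,\beta_{d,k_d}$ are linearly independent homogeneous elements of degree $d$. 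Hence the whole collection $\{\beta_{d,j}\}$ is linearly independent and spans a subspace $B_0$ of dimension $\dim B$.

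Next consider the family $\lambda\cdot_\gamma B$ over $\GG_m$. The element $\lambda^{-d}(\lambda\cdot_\gamma b_{d,j}) = \beta_{d,j} + \lambda(\cdots)$ lies in $\lambda\cdot_\gamma B$ and extends polynomially over $\AA^1$. Together these elements form $\dim B$ sections that stay linearly independent at $\lambda=0$, so they span a rank-$(\dim B)$ subbundle of $A^+_{\vec c}\otimes k[\lambda]$, i.e.\ a morphism $\AA^1 \to \Grass$ extending the orbit map. Its fiber at $0$ is $B_0$, and by separatedness this is the limit. Because $\Ter^g_{\vec c}$ is closed in the Grassmannian, $B_0$ lies in $\Ter^g_{\vec c}$; this is also visible directly, since the closure-under-multiplication condition is closed.

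Then (i) follows. $B_0$ is spanned by homogeneous elements, so it is $\gamma$-graded, and a graded subspace is fixed by the $\gamma$-weighted $\GG_m$-action. The basis $\{\beta_{d,j}\}$ is $\gamma$-normal with $k_d$ elements in degree $d$, so $B_0$ has the same vanishing sequence. If $B$ is graded, choose the $b_{d,j}$ homogeneous (take graded pieces); then $\beta_{d,j} = b_{d,j}$ and $B_0 = B$. Conversely, if $B_0 = B$ then $B$ is graded.

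For (ii), suppose $B$ is $\gamma_1$-graded. Then $B$ is stable under the $\gamma_1$-torus $\GG_m$, and that torus commutes with the $\gamma_2$-action because both factor through the commutative $T$. Hence every $\lambda\cdot_{\gamma_2}B$ is $\gamma_1$-stable, and the limit is $\gamma_1$-stable by closedness of the fixed locus, which is closed since $\Grass$ is separated; $\gamma_1$-stable means $\gamma_1$-graded. For the vanishing sequence, $\gamma_1$-vanishing sequences are locally constant on the $\GG_m$-orbit, because $T$ preserves each $I^{\gamma_1}_d$. A quick way to finish: for a $\gamma_1$-graded $B$, the $\gamma_1$-vanishing sequence equals the dimensions of the graded pieces $B\cap A_{(d)}$. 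The $\gamma_2$-limit can be computed componentwise on each $\gamma_1$-graded piece, since a $\gamma_2$-normal basis may be chosen $\gamma_1$-homogeneous and taking $\gamma_2$-minimal summands preserves $\gamma_1$-homogeneity. By (iii), this preserves the dimension of each piece.

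The main obstacle is this last point: producing a basis that is simultaneously $\gamma_1$-homogeneous and $\gamma_2$-normal. I would build it by working inside each $\gamma_1$-graded piece $B_{(e)}$ separately, taking a $\gamma_2$-normal basis of $B_{(e)}$ for the induced filtration, and checking that the union over $e$ is $\gamma_2$-normal for $B$. This holds because $I^{\gamma_2}_d$ is spanned by monomials, so $B\cap I^{\gamma_2}_d = \bigoplus_e B_{(e)}\cap I^{\gamma_2}_d$.
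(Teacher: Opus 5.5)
Your proposal is correct and follows essentially the same route as the paper. You prove (iii) by rescaling a $\gamma$-normal basis by $\lambda^{-d}$ and setting $\lambda=0$, deduce (i) from it, and for (ii) compute the $\gamma_2$-limit separately on each $\gamma_1$-graded piece. Your observation that $B\cap I^{\gamma_2}_d$ splits over the $\gamma_1$-graded pieces (because $I^{\gamma_2}_d$ is spanned by monomials) supplies the justification that the paper's one-line proof of (ii) leaves implicit.
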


\begin{proof}
Item (i) follows from (iii). Let us prove (iii).

For each $d \in \ZZ$, $j = 1, \ldots, k_d$, write $b_{d,j} = \sum_{e} f_{d,j,e}$ where $f_{d,j,e}$ is the degree $e$ part of $b_{d,j}$.
Define a family over $\GG_m$ by taking $\lambda \in \GG_m$ to $\{ \lambda \cdot b_{d,1}, \ldots, \lambda \cdot b_{d,k_d} \}$.
We get the subalgebra of $k[\lambda, \lambda^{-1}] \otimes A^+_{\vec{c}}$ with $k[\lambda, \lambda^{-1}]$-basis
\[
  \{ \sum_{e} \lambda^e f_{d,1,e}, \ldots, \sum_{e} \lambda^e f_{d,k_d,e} : d \in \ZZ \}.
\]
The smallest degree $e$ in which $f_{d,j,e}$ is nonzero is $e = d$, so we may rescale to obtain a new basis for the same family of subalgebras
with only non-negative powers of $\lambda$, namely
\[
  \{ \sum_{e} \lambda^{e-d} f_{d,1,e}, \ldots, \sum_{e} \lambda^{e-d}f_{d,k_d,e} : d \in \ZZ \}.
\]
Now, plugging in $\lambda = 0$, the limit of this family as $\lambda \to 0$ in $\BTer$ is
\[
  \{ f_{d,1,d}, \ldots, f_{d,k_d,d} : d \in \ZZ \},
\]
provided that this remains linearly independent. But linear independence is clear since our original basis is $\gamma$-normal. 
Therefore the limit subalgebra is that spanned by the basis above, as claimed. Since the reduction of the basis elements modulo powers of $\mathcal{I}^{\gamma}$ is unchanged, this is also a $\gamma$-normal basis.

To prove (ii) we notice that we can choose $\gamma_2$-normal bases of each of the graded pieces of $B$, and compute the $\gamma_2$-limit graded piece by graded piece.
\end{proof}

\begin{example}
Consider the action of $\GG_m$ on the reduction of the unibranch territory $\Ter^{g}_{g+2}$.
Let $\ell = \lceil (g+2)/2 \rceil$, and $r = g - \ell + 1$.
Recall from Example \ref{ex:ter_g_g_plus_2} that $(\Ter^g_{g+2})_{red} \cong \PP^{r}$. An $S$-point $[a_\ell : \cdots : a_{g+1} ] \in \PP^r$ corresponds to the subalgebra
\[
  B = \OO_S \cdot 1 + \OO_S \cdot (a_\ell t^\ell + \cdots + a_{g+1}t^{g+1}) \subseteq \OO_S[t]/t^{g+2}.
\]
The action of $\GG_m$ is given by $\lambda \cdot [a_\ell : \cdots : a_{g+1}] = [\lambda^\ell a_\ell : \cdots : \lambda^{g+1} a_{g+1}]$.

There are only two interesting degree assignments: $\gamma(x) = x$ and its negation.
Suppose $B = k \cdot 1 + k(a_{i_1}t^{i_1} + \cdots + a_{i_2}t^{i_2})$ is a $k$-point for some field $k$ and $i_1, i_2$ are respectively the smallest and largest
indices for which $a_i$ is nonzero. We may write
\begin{align*}
  \lambda \cdot_{\gamma} B &= k \cdot 1 + k \left(t^{i_1} + \left(\frac{a_{i_1+1}}{a_{i_1}}\lambda\right)t^{i_1 +1} + \cdots + \left(\frac{a_{i_2}}{a_{i_1}}\lambda^{i_2 - i_1}\right)t^{i_2} \right) \\
  \lambda \cdot_{-\gamma} B &= k \cdot 1 + k \left(\left( \frac{a_{i_1}}{a_{i_2}}\lambda^{i_2 - i_1}\right)t^{i_1} + \left(\frac{a_{i_1+1}}{a_{i_2}}\lambda^{i_2 - i_1 - 1}\right)t^{i_1 +1} + \cdots + t^{i_2} \right) \\
\end{align*}
The corresponding limits are then
\[
  \lim_\gamma B = k \cdot 1 + k \cdot t^{i_1} \quad \text{and} \quad \lim_{-\gamma} B = k \cdot 1 + k \cdot t^{i_2}
\]

When $g = 6, c = 8$ we have the point $k\cdot 1 + k (t^4 + t^6 + t^7) \subseteq k[t]/t^8$. Its orbit closure is the cuspidal cubic, since
\[
  \lambda \cdot [1 : 0 : 1 : 1] = [\lambda^4 : 0 : \lambda^6 : \lambda^7 ] = [1 : 0 : \lambda^2 : \lambda^3 ].
\]
Thus orbit closures are not necessarily normal.
\end{example}

\begin{lemma} \label{lem:limit_decomposable}
Let $B$ be a $k$-point of $\Ter^g_{\vec{c}}$ where $k$ is a field. For each $i = 1, \ldots, m$, let $\gamma_i : \ZZ^m \to \ZZ$ be the projection onto the $i$th coordinate. If $I \subseteq \{1, \ldots, m \}$, let $\gamma_I : \ZZ^m \to \ZZ$ be the sum $\sum_{i \in I} \gamma_i$. Then $\lim_{\gamma_I}B$ is decomposable into the transverse union of an $|I|$-branch singularity and an $m-|I|$-branch singularity.
\end{lemma}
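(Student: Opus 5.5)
The plan is to compute the limit directly from a $\gamma_I$-normal basis using Lemma \ref{lem:gamma_limits}, and then recognize its graded structure as a join. Write $I' = [m] - I$. Under the $\gamma_I$-grading, $t_i^j$ has degree $j$ if $i \in I$ and degree $0$ if $i \in I'$. Thus the degree-$0$ part of $A^+_{\vec{c}}$ is $\ZZ \cdot 1 \oplus j_{I'}(\mm_{\vec{c}|_{I'}})$, and the positive-degree part is $j_I(\mm_{\vec{c}|_I})$. By Lemma \ref{lem:gamma_limits}(i), $B' \coloneqq \lim_{\gamma_I} B$ is a $k$-point of $\Ter^g_{\vec{c}}$ that is $\gamma_I$-graded. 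Hence
\[
  \mm_{B'} = (\mm_{B'} \cap j_{I'}(\mm_{\vec{c}|_{I'}})) \oplus (\mm_{B'} \cap j_I(\mm_{\vec{c}|_I})),
\]
since $\mm_{B'}$ is a graded subspace of $\mm_{\vec{c}}$ and $\mm_{\vec{c}}$ is the direct sum of its degree-$0$ part $j_{I'}(\mm_{\vec{c}|_{I'}})$ and its positive part $j_I(\mm_{\vec{c}|_I})$. Concretely, Lemma \ref{lem:gamma_limits}(iii) shows $B'$ is spanned by the lowest-degree summands of a $\gamma_I$-normal basis of $B$. Each such summand is homogeneous, so it lies either in the degree-$0$ piece or in $j_I(\mm_{\vec{c}|_I})$.

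Next, set $\mm_1 = j_I^{-1}(\mm_{B'})$ and $\mm_2 = j_{I'}^{-1}(\mm_{B'})$. Each is closed under multiplication, because $\mm_{B'}$ is, and $j_I$, $j_{I'}$ are multiplicative embeddings of ideals whose images multiply to zero against each other. So $B_1 = k\cdot 1 \oplus \mm_1$ and $B_2 = k \cdot 1 \oplus \mm_2$ are subalgebras of $A^+_{\vec{c}|_I}\otimes k$ and $A^+_{\vec{c}|_{I'}} \otimes k$ respectively. Let $g_I$ and $g_{I'}$ be their coranks. The direct sum decomposition gives $g_I + g_{I'} = g$, and by definition of the join, $B' = B_1 \vee B_2$. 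Geometrically, this says the limit singularity is the transverse union of an $|I|$-branch singularity and an $(m-|I|)$-branch singularity.

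The only point needing care is the first step: that $\gamma_I$-graded subspaces split along the decomposition $\mm_{\vec{c}} = j_I(\mm_{\vec{c}|_I}) \oplus j_{I'}(\mm_{\vec{c}|_{I'}})$. The constant $1$ also lives in degree $0$, so one must check that the degree-$0$ part of $\mm_{B'}$ lies in $j_{I'}(\mm_{\vec{c}|_{I'}})$. This holds because $\mm_{B'}$ sits inside the maximal ideal $\mm_{\vec{c}}$, whose degree-$0$ part is exactly $j_{I'}(\mm_{\vec{c}|_{I'}})$. Also, since $B'$ is a $k$-point, the coranks $g_I$ and $g_{I'}$ are just vector space dimensions, so no flatness issue arises. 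I expect this splitting step to be the main, though mild, obstacle; everything else is bookkeeping.
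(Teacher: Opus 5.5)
Your proposal is correct and follows the same route as the paper: use Lemma \ref{lem:gamma_limits} to see that $\lim_{\gamma_I} B$ is $\gamma_I$-graded, then note that its positive-degree pieces live on the $I$-branches and its degree-zero pieces (apart from constants) live on the $I'$-branches, which gives the join. You also supply the details the paper's short proof leaves implicit, namely the splitting of $\mm_{B'}$, closure of $\mm_1$ and $\mm_2$ under multiplication, and the genus count $g_I + g_{I'} = g$.
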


\begin{proof}
This follows from (iii) in the previous lemma. After taking the $\gamma_I$-limit, the resulting subalgebra is $\gamma_I$-graded. The positive $\gamma_I$-degree pieces live on the branches indexed by $I$, while the $\gamma_I$-degree 0 pieces live on branches indexed by the complement of $I$. This gives a transverse-union decomposition.
\end{proof}

We recall some basic definitions related to numerical monoids.
\begin{definition}
A \emphbf{numerical monoid} is a submonoid $M$ of $\NN$ such that $\NN - M$ is a finite set.
The smallest integer $c(M)$ such that $c(M) + \NN \subseteq M$ is called the \emphbf{conductor} of $M$. The size $g(M)$ of the set $\NN - M$ is called the \emphbf{genus} of $M$.
\end{definition}

If $M$ is a numerical monoid of positive genus, then $\Spec k[M]$ is a curve with a singularity of genus $g(M)$ and conductance $c(M)$.

\begin{theorem} \label{thm:fixed_points_are_monoids} Let $k$ be a field. \hfill
\begin{enumerate}
  \item The $T$-fixed $k$-points of $\Ter^g_{\vec{c}}$ are in bijection with the tuples of numerical monoids $(M_1, \ldots, M_m)$ such that $c(M_i) \leq c_i$ for each $i$ and $\sum_i g(M_i) = g$.
  \item Each orbit-closure of $T$'s action contains at least one $T$-invariant point.
  \item There is a $k$-point of $\mathcal{Z}_{(k_d)}$ if and only if there is such a tuple of
  numerical monoids $M_1, \ldots, M_m$ such that
  \[
    k_d = \# \{ i \mid d \in M_i \text{ and } d < c_i \}
  \]
  for all $d \geq 1$.
\end{enumerate}
\end{theorem}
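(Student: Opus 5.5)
For (i), I will show that a $T$-fixed $k$-point $B$ is exactly a subalgebra spanned by monomials. Once that is established, the bijection is $B \mapsto (M_1,\ldots,M_m)$ with $M_i = \{0\} \cup \{ j \geq 1 : t_i^j \in B\} \cup \{ j \geq c_i\}$.

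First I show $T$-fixed points are monomial. Invariance under $T$ means invariance under every one-parameter subgroup, in particular under each coordinate grading $\gamma_i$. By Lemma \ref{lem:gamma_limits}(i), $B = \lim_{\gamma_i} B$ is then $\gamma_i$-graded for every $i$. Hence $B$ is multigraded by $\ZZ^m$. The multigraded pieces of $\mm_{\vec{c}}$ are the lines $k t_i^j$, so $\mm_B$ is spanned by monomials. (Over a finite field the scheme-theoretic fixed-point condition is what should be used; the limit lemma handles it.)

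Next I check the monoid properties and count the genus. Closure of $B$ under multiplication, together with $t_i t_{i'} = 0$ for $i \neq i'$, says exactly that each $M_i$ is closed under addition. The condition $c(M_i) \leq c_i$ holds because all $j \geq c_i$ are included. The genus count is the corank count: $g(M_i) = \#\{1 \leq j < c_i : t_i^j \notin B\}$, and summing over $i$ gives the corank $g$. Conversely, any such tuple of monoids spans a monomial subalgebra of corank $\sum g(M_i) = g$, and this subalgebra is visibly $T$-fixed.

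For (ii), I apply iterated limits. Start with $B$ and set $B_1 = \lim_{\gamma_1} B$, which is $\gamma_1$-graded. Then set $B_2 = \lim_{\gamma_2} B_1$. By Lemma \ref{lem:gamma_limits}(ii), $B_2$ stays $\gamma_1$-graded and is also $\gamma_2$-graded. Continuing through $\gamma_m$, the result $B_m$ is graded for every $\gamma_i$, so it is monomial and therefore $T$-fixed by (i). Each step stays in the closure of the previous orbit, and each orbit is contained in the orbit closure of $B$ because $T$ is commutative and closures are $T$-stable. So $B_m$ lies in $\overline{T\cdot B}$.

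For (iii), I use that the total-degree vanishing sequence is preserved under limits. For a monomial $B$, the quotient $B\cap \mm^d / B \cap \mm^{d+1}$ has basis the $t_i^d \in B$ with $d < c_i$. Hence $k_d = \#\{i : d \in M_i,\ d < c_i\}$.

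Given a $k$-point of $\mathcal{Z}_{(k_d)}$, I take the $T$-fixed point constructed in (ii). I need to know its total-degree vanishing sequence is unchanged, and this is the main point requiring care. Lemma \ref{lem:gamma_limits}(ii) only asserts preservation of the $\gamma_1$-sequence for an algebra that is already $\gamma_1$-graded. So I first replace $B$ by $\lim_{\gamma} B$ for the total grading $\gamma$. By part (i) of that lemma this has the same total vanishing sequence and is $\gamma$-graded. Applying (ii) of the lemma with $\gamma_1 := \gamma$ and $\gamma_2 := \gamma_i$ successively then preserves the total vanishing sequence at every step.

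The converse direction of (iii) is immediate: the monomial algebra built from the tuple of monoids has the stated $k_d$. I also need to handle $d \le 0$. Only degree 0 contributes there, with $k_0 = 1$ from the constants, consistently on both sides, so the identity is claimed only for $d \geq 1$.
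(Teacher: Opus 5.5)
Your proposal is correct and follows the paper's proof essentially step for step. You use Lemma~\ref{lem:gamma_limits}(i) with the coordinate gradings $\gamma_i$ to show that $T$-fixed points are monomial. For (ii) you take the iterated limits $\lim_{\gamma_m}\cdots\lim_{\gamma_1}$. For (iii) you take a preliminary total-degree limit so that Lemma~\ref{lem:gamma_limits}(ii) preserves the vanishing sequence. The only cosmetic difference is that the paper already includes the initial $\lim_\gamma$ in part (ii), while you add it only in (iii) where it is needed.
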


\begin{proof}
For each $i = 1, \ldots, m$, let $\gamma_i : \ZZ^m \to \ZZ$ be the projection onto the $i$th coordinate and denote by $\gamma : \ZZ^m \to \ZZ$ the sum map, inducing the standard grading.

To see (i), suppose $B$ is a $T$-fixed $k$-point of $\Ter^g_{\vec{c}}$. Then by Lemma \ref{lem:gamma_limits}(i) it is graded with respect to all $\gamma_1, \ldots, \gamma_m$, so $\mm_B$ admits a basis by elements of the form $t_i^{d}$. Since $B$ is closed under multiplication
\[
  M_i = \{ d \in \NN \mid t_i^d \in B \} \cup \ZZ_{\geq c_i}
\]
is a numerical monoid with conductor at most $c_i$ for each $i$. It is straightforward to check that $B$ is the point of
$\Ter^g_{\vec{c}}$ associated to the join of the monoidal singularities $\Spec k[M_i]$. Since $B$ is a point of $\Ter^g_{\vec{c}}$,
we have $g = \sum_{i = 1}^m g(M_i)$.

The inverse function is given by taking a tuple $M_1, \ldots, M_m$ of numerical monoids with $i$th conductor at most $c_i$ and total genus $g$ to the subalgebra
\[
  k \cdot 1 \oplus \mathrm{Span} \{ t_i^d \mid 1 \leq i \leq m, 1 \leq d < c_i, d \in M_i \}.
\]
It is clearly graded with respect to $\gamma_1, \ldots, \gamma_m$ and therefore $T$-invariant.

To see (ii), suppose $B$ is a $k$-point of $\Ter^g_{\vec{c}}$. Then
\[
  B' = \lim_{\gamma_{m}}\cdots\lim_{\gamma_1} \lim_\gamma B
\]
is graded with respect to $\gamma_1, \ldots, \gamma_m$ by Lemma \ref{lem:gamma_limits}(i). Since $B'$ is invariant with respect to the $\GG_m$-actions associated to $\gamma_1, \ldots, \gamma_m$, it is invariant under the action of $T$.

To see (iii), observe that $B'$ has the same vanishing sequence as $B$ by Lemma \ref{lem:gamma_limits}(ii), since we started by taking the limit with respect to the standard grading.
Then there is a point of $\mathcal{Z}_{(k_d)}$ if and only if there is one of the form
\[
  k \cdot 1 \oplus \mathrm{Span} \{ t_i^d \mid 1 \leq i \leq m, 1 \leq d < c_i, d \in M_i \}
\]
for numerical monoids $M_1, \ldots, M_m$. We compute that 
the vanishing sequence is
\[
  k_d = \#\{ i \mid d \in M_i \text{ and } d < c_i \}.
\]
\end{proof}

Plugging in $m = 1$ to Theorem \ref{thm:fixed_points_are_monoids}, we obtain:

\begin{corollary} 
The unibranch territory $\Ter^g_c$ has a stratification into locally closed subschemes $\mathcal{Z}_M$ indexed by the numerical monoids $M$ of genus $g$ with conductor at most $c$.
Each stratum contains a unique $\GG_m$-invariant $k$-point, associated to the monoidal singularity $\Spec k[M]$.
\end{corollary}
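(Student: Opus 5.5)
We specialize Theorem \ref{thm:fixed_points_are_monoids} to $m = 1$ and then show that the vanishing-sequence strata in the unibranch case are indexed by monoids. When $m=1$ the torus is $T = \GG_m$, and the standard grading coincides with the $\gamma_1$-grading. For the stratification we use the vanishing-sequence stratification $\{\mathcal{Z}_{(k_d)}\}$ of $\Ter^g_c$, which is already a universal locally closed stratification by construction via Fitting ideals.

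The key observation is that for $m=1$ each $k_d$ lies in $\{0,1\}$, because $\mm^d/\mm^{d+1}$ is spanned by $t^d$ alone. So a vanishing sequence is the same thing as the subset $\{d \geq 1 : k_d = 1\} \subseteq \{1,\ldots,c-1\}$. Theorem \ref{thm:fixed_points_are_monoids}(iii) says that $\mathcal{Z}_{(k_d)}$ has a $k$-point if and only if there is a numerical monoid $M$ with $c(M)\le c$, $g(M) = g$, and $k_d = 1 \iff d\in M,\ d<c$. Such an $M$ is uniquely determined by $(k_d)$, since $M = \{0\}\cup\{d : k_d=1\}\cup \ZZ_{\ge c}$. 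Conversely, $M$ determines $(k_d)$. We therefore discard the empty strata and relabel the nonempty ones $\mathcal{Z}_M$, indexed by monoids of genus $g$ with conductor at most $c$.

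For the uniqueness of the invariant point, we use Theorem \ref{thm:fixed_points_are_monoids}(i): the $\GG_m$-fixed $k$-points are exactly the subalgebras $B_M = k\cdot 1\oplus \mathrm{Span}\{t^d : d\in M,\ 1\le d<c\}$. The vanishing sequence of $B_M$ is visibly the one attached to $M$, so $B_M \in \mathcal{Z}_M$. Any other fixed point is $B_{M'}$ with $M'\neq M$, and it lies in the different stratum $\mathcal{Z}_{M'}$. Hence $B_M$ is the unique invariant point of $\mathcal{Z}_M$, and it corresponds to $\Spec k[M]$ after pushout, since the completed local ring is $k\llbracket M\rrbracket$.

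The main obstacle is minor. It is checking that the fixed-point bijection of part (i) matches the vanishing sequence exactly, including the truncation at $d<c$: exponents $d\ge c$ are zero in $A^+_c$ and are forced into $M$ automatically. Once that bookkeeping is done, everything else is a direct reading of Theorem \ref{thm:fixed_points_are_monoids}.
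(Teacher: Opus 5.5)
Your proposal is correct and follows the paper's route. The paper obtains this corollary simply by setting $m=1$ in Theorem~\ref{thm:fixed_points_are_monoids}, and you carry out exactly that specialization. The details you add are the bookkeeping the paper leaves implicit: for $m=1$ one has $k_d\in\{0,1\}$, so the vanishing sequences that occur correspond bijectively to monoids of genus $g$ with conductor at most $c$, and therefore distinct fixed points $B_M$ lie in distinct strata.
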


\begin{example}
Consider the $k$-point $B = k + k(t_1 + t_2)$ of $\Ter^1_{(2,2)}$. Then
\[
  \lim_{\gamma_1} \lim_{\gamma_2} B = k + kt_1
\]
while
\[
  \lim_{\gamma_2} \lim_{\gamma_1} B = k + kt_2.
\]
That is, the limit, and even the $\gamma_1$- and $\gamma_2$-vanishing sequences of the limit depend on the order in which the limits are taken. Both limits are graded according to the usual grading since $B$ was already graded.
\end{example}

\section{The action of \texorpdfstring{$\Aut(A_{\vec{c}}^+)$}{Aut(A\_c+)} and close connectedness of \texorpdfstring{$\UU_{g,n}$}{U\_gn}}
\label{sec:aut_action}

Throughout this section we will let $k$ be an algebraically closed field of characteristic zero and work in $\Sch / k$. Accordingly, we will write $A^+_{\vec{c}}$ instead of $A^+_{\vec{c}} \otimes k$ and $\Ter^g_{\vec{c}}$ instead of $\Ter^g_{\vec{c}} \times \Spec k$. Fix an integer $g \geq 0$ and a vector $\vec{c} = (c_1, \ldots, c_m) \in \ZZ_{\geq 1}^m$ with sum $c$.
Write $\mm$ for the maximal ideal of $A^+_{\vec{c}}$.

\begin{definition}
 We define $G_{\vec{c}}$ by setting $G_{\vec{c}}(S)$ to be the group $\Aut(A^+_{\vec{c}})(S)$ of $\OO_S$-algebra automorphisms of $A^+_{\vec{c}} \otimes \OO_S$, for each $k$-scheme $S$.
\end{definition}

It is a general fact that automorphism groups of free $\OO_S$-algebras are group schemes.

\begin{lemma}
Let $S$ be a scheme and suppose $\mathscr{A}$ is a free $\OO_S$-algebra of finite rank $n$. Then the group functor $\Aut(\mathscr{A}) : (\Sch/S)^{op} \to \Grp$ taking $f : T \to S$ to the $\OO_T$-algebra automorphisms of $f^*\mathscr{A}$ is representable by a closed group subscheme of $\GL_n(\OO_S)$.
\end{lemma}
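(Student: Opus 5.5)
We prove that $\Aut(\mathscr{A})$ is represented by a closed group subscheme of $\GL_n(\OO_S)$ by writing down the algebra-homomorphism conditions as polynomial equations in the matrix entries. Everything is compatible with base change, so we may fix a global $\OO_S$-basis $e_1, \ldots, e_n$ of $\mathscr{A}$. Write the multiplication as $e_ie_j = \sum_k a_{ij}^k e_k$ with $a_{ij}^k \in \Gamma(S, \OO_S)$, and the unit as $1 = \sum_k u_k e_k$ with $u_k \in \Gamma(S,\OO_S)$.

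First, for an $S$-scheme $f : T \to S$, an $\OO_T$-module endomorphism of $f^*\mathscr{A}$ is the same thing as an $n \times n$ matrix $(x_{ij})$ with entries in $\Gamma(T,\OO_T)$, via $e_j \mapsto \sum_i x_{ij} e_i$. It is an automorphism of modules exactly when the matrix is invertible. Thus $\mathrm{GL}(f^*\mathscr{A})$ is represented by $\GL_{n,S} = \Spec \OO_S[x_{ij}, \det^{-1}]$.

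Second, such an automorphism $\phi$ is an $\OO_T$-algebra automorphism if and only if two conditions hold. The first is $\phi(e_ie_j) = \phi(e_i)\phi(e_j)$ for all $i,j$. Expanded in the basis, this reads
\[
  \sum_k a_{ij}^k x_{lk} = \sum_{p,q} x_{pi} x_{qj} a_{pq}^l \quad \text{for all } i,j,l.
\]
The second is $\phi(1) = 1$, which reads $\sum_k u_k x_{lk} = u_l$ for all $l$. Together these are finitely many polynomial equations in the $x_{ij}$ with coefficients from $\OO_S$. The inverse of a bijective algebra homomorphism is again an algebra homomorphism, so no further conditions are needed. Since an $\OO_T$-linear map is determined by its values on the basis, and multiplication is bilinear, checking these conditions on basis elements suffices. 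Hence the subfunctor $\Aut(\mathscr{A}) \subseteq \GL_{n,S}$ is represented by the closed subscheme cut out by these equations.

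Third, we check that this closed subscheme is a subgroup scheme. On $T$-points the conditions define a subgroup of $\GL_n(\Gamma(T,\OO_T))$: compositions and inverses of algebra automorphisms are algebra automorphisms, and the identity is one. A closed subscheme whose $T$-points form a subgroup functorially in $T$ is a closed subgroup scheme, by Yoneda.

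In general $\mathscr{A}$ is only locally free, not free. In that case we cover $S$ by opens on which it is free. The constructions are canonical, since they are defined by the moduli functor, so they glue to a closed subgroup scheme of the group scheme $\mathrm{GL}(\mathscr{A})$. No step is a real obstacle; the only care needed is the unit condition, because in general $1$ need not be a basis element. Here one may also choose $e_1 = 1$ locally, when $\mathscr{A} = \OO_S \cdot 1 \oplus \mm$ as in our cases.
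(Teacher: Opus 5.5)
Your proposal is correct and follows the same route as the paper. You fix a global basis, view $\Aut(\mathscr{A})$ as a subfunctor of $\GL_n$, and cut it out by the finitely many polynomial equations expressing $\phi(1)=1$ and $\phi(e_i)\phi(e_j)=\phi(e_ie_j)$, which pull back along any $T \to S$. The explicit structure-constant equations and the Yoneda argument for the subgroup property only spell out details the paper leaves implicit, and your final paragraph on the locally free case is unnecessary because the statement assumes $\mathscr{A}$ is free.
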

\begin{proof}
Let $e_1, \ldots, e_n \in \Gamma(S, \mathscr{A})$ be a basis of $\mathscr{A}$ and use this basis to identify $\mathscr{A}$ with $\OO_S^n$. Then there is an obvious inclusion $\Aut(\mathscr{A}) \to \GL_n(\OO_S)$ of functors to $\Grp$ since $\OO_T$-algebra automorphisms are in particular $\OO_T$-module automorphisms. Write $f_{i,j} = e_ie_j$ for the products of basis elements for $i = 1, \ldots, n, j = 1, \ldots, n$. Let $\phi : \mathscr{A} \to \mathscr{A}$ be an $\OO_S$-module automorphism. Then $\phi \in \Aut(\mathscr{A})(S)$ if and only if $\phi(1) = 1$ and $\phi(e_i)\phi(e_j) = \phi(f_{i,j})$ for all $i = 1, \ldots, n, j = 1, \ldots, n$. Writing these equations in terms of coefficients of basis representations, we get a finite system of equations which must be satisfied in order for $\phi \in \GL_n(S)$ to factor through $\Aut(\mathscr{A})$. These same equations pull back to the required equations for $\phi \in \GL_n(T)$ to factor through $\Aut(\mathscr{A})(T)$, so we conclude that $\Aut(\mathscr{A})$ is representable by a closed group subscheme of $\GL_n(\OO_S)$.
\end{proof}

The following lemma is helpful for identifying automorphisms of $A^+_{\vec{c}}$.

\begin{lemma}
Let $S$ be a scheme and $\phi : A_{\vec{c},S}^+ \to A_{\vec{c},S}^+$ be an $\OO_S$-algebra endomorphism such that $\phi(\mm_S) \subseteq \mm_S$. Then $\phi$ is an isomorphism if and only if the induced morphism of $\OO_S$-modules $\ol{\phi} : \mm_S / \mm_S^2 \to \mm_S / \mm_S^2$ is an isomorphism.
\end{lemma}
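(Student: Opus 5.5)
The plan is a graded Nakayama argument, using that $\mm_S$ is nilpotent, together with the fact that a square matrix over a ring is invertible exactly when its determinant is a unit. Throughout I write $\mm$ for $\mm_S$. One direction is formal: if $\phi$ is an isomorphism, then $\phi^{-1}$ is also an algebra map. Since $\mm$ is the kernel of the augmentation $A^+_{\vec{c},S}\to\OO_S$, and $\phi$ preserves $\mm$, the inverse also preserves $\mm$ (it preserves the augmentation because $\phi$ does, and $\phi(\mm)\subseteq\mm$ with $\phi(1)=1$). Hence $\phi^{-1}$ induces an inverse to $\ol{\phi}$ on $\mm/\mm^2$.

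For the converse I would argue in three steps.

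\begin{enumerate}
\item Note the structure of the powers of $\mm$. The module $\mm^d$ is spanned by the $t_i^e$ with $e\ge d$ and $e<c_i$. Each successive quotient $\mm^d/\mm^{d+1}$ is therefore a free $\OO_S$-module with basis the classes of the $t_i^d$ with $d<c_i$. In particular all $\mm^d$ are locally free direct summands, and $\mm^N=0$ for $N\ge \max c_i$.

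\item Show that $\phi$ induces isomorphisms $\mm^d/\mm^{d+1}\to \mm^d/\mm^{d+1}$ for every $d\ge 1$. Since $\phi$ is multiplicative and $\phi(\mm)\subseteq\mm$, we get $\phi(\mm^d)\subseteq \mm^d$, so $\phi$ induces a graded map on $\mathrm{gr}\,\mm=\bigoplus_d \mm^d/\mm^{d+1}$. This graded ring is generated in degree one by the classes $\ol{t_i}$, subject only to $t_it_j=0$ for $i\ne j$. Write $\ol{\phi}(\ol{t_i})=\sum_j a_{ij}\ol{t_j}$. Then $\ol{t_i}^d$ maps to $\sum_j a_{ij}^d\ol{t_j}^d$, with the terms for $d\ge c_j$ dropped. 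The matrix in degree $d$ is thus the Hadamard power $(a_{ij}^d)$, restricted to the indices with $c_i,c_j>d$.

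\item Establish invertibility of these Hadamard powers, which is the main obstacle. I would check it pointwise on $S$: a matrix of functions is invertible iff it is invertible at every point, and the determinant is compatible with base change to residue fields. So it suffices to work over a field.
\begin{itemize}
\item First show that $A=(a_{ij})$ is a generalized permutation matrix. Over a field, for $i\ne j$ the relation $t_it_j=0$ forces $\ol{\phi}(\ol{t_i})\,\ol{\phi}(\ol{t_j})=0$ in degree two, that is, $a_{ik}a_{jk}=0$ for every $k$ with $c_k>2$.
\item For indices $k$ with $c_k\le 2$, the degree-two relation gives nothing. The first thing I would try here is to exploit that the rows of $A$ restricted to the columns with $c_k>2$ have disjoint supports, and then combine this with the invertibility of $A$. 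If that fails, I would instead work directly with a unit-determinant argument on the full block.
\item Either way, the goal is to show that for each $d$ the degree-$d$ block is again a monomial matrix with unit entries. Its determinant is then a unit, so the block is invertible.
\end{itemize}
\end{enumerate}

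With all graded pieces isomorphisms, a descending induction on $d$ concludes. The restriction of $\phi$ to $\mm^N=0$ is trivially an isomorphism. The five lemma applied to $0\to\mm^{d+1}\to\mm^d\to\mm^d/\mm^{d+1}\to 0$ then shows that $\phi|_{\mm^d}$ is an isomorphism for every $d$, and in particular on $\mm$. Finally, $\phi$ fixes $\OO_S\cdot 1$ and $A^+_{\vec{c},S}=\OO_S\cdot1\oplus\mm$, so $\phi$ is an isomorphism.
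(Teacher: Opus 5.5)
There is a genuine gap in step 3. You correctly flag it as the main obstacle, but you do not resolve it, and the route you sketch cannot resolve it.

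First, $A$ need not be a generalized permutation matrix. For $\vec{c}=(2,2)$ we have $\mm^2=0$, so every invertible $2\times 2$ matrix arises.

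More seriously, the information you propose to combine does not force the degree-$d$ blocks to be invertible. That information is the relations $a_{ik}a_{jk}=0$ for $i\neq j$ and $c_k>2$, together with invertibility of $A$. Take $\vec{c}=(3,2)$ and $a_{11}=a_{22}=0$, $a_{12}=a_{21}=1$. Both conditions hold, yet the degree-$2$ block, indexed by $\{k : c_k>2\}=\{1\}$, is $(a_{11}^2)=(0)$. This $\ol{\phi}$ fails to come from an algebra endomorphism only because $t_2^2=0$ would be sent to $t_1^2\neq 0$, and relations of that kind never enter your argument.

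The missing input is the nilpotency relations. Applying $\mathrm{gr}(\phi)$ to $\ol{t_i}^{\,c_i}=0$ gives $a_{ik}^{c_i}=0$, hence $a_{ik}=0$ over a field, whenever $c_k>c_i$. With this, the argument closes:
\begin{enumerate}
\item Each column $k$ with $c_k\geq 3$ has exactly one nonzero entry, in some row $\sigma(k)$ with $c_{\sigma(k)}\geq c_k$.
\item Invertibility of $A$ makes $\sigma$ injective.
\item Hence $\sigma$ permutes each set $\{k : c_k>d\}$ for $d\geq 2$, and the degree-$d$ block is monomial with nonzero entries.
\end{enumerate}

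That said, the Hadamard-power analysis is unnecessary. You already observed that $\mathrm{gr}\,\mm$ is generated in degree one. If $y_i\in\mm$ satisfies $\ol{\phi}(\ol{y_i})=\ol{t_i}$, then $\phi(y_i)^d\equiv t_i^d \pmod{\mm^{d+1}}$. So each induced map $\mm^d/\mm^{d+1}\to\mm^d/\mm^{d+1}$ hits the spanning set $\{\ol{t_i}^{\,d}\}$ and is surjective. A surjective endomorphism of a finite free module is an isomorphism, and your five-lemma induction then finishes.

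This is essentially the paper's proof in graded form. The paper lifts each $t_i$ through $\phi$ modulo successively higher powers of $\mm_S$, concludes that $\phi$ is surjective because its image contains the algebra generators, and then uses that a surjective endomorphism of a finite free $\OO_S$-module is an isomorphism. Your forward direction is fine as written.
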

\begin{proof}
The assertion is local on $S$, so we may assume $S$ is affine.

Suppose $\phi$ is an isomorphism with inverse $\psi$. Since $\phi(\mm_S) \subseteq \mm_S$ and $A^+_{\vec{c},S}/\mm_S \cong \OO_S$, we may form a commutative diagram of $\OO_S$-modules
\[
\begin{tikzcd}
  0 \ar[r] & \mm_S \ar[r] \ar[d, "\phi"] & A^+_{\vec{c},S} \ar[r] \ar[d, "\phi"] & \OO_S \ar[r] \ar[d] & 0 \\
  0 \ar[r] & \mm_S \ar[r] & A^+_{\vec{c},S} \ar[r] & \OO_S \ar[r] & 0.
\end{tikzcd}
\]
where the rows are exact and the right square consists of $\OO_S$-algebra homomorphisms. The only $\OO_S$-algebra homomorphism $\OO_S \to \OO_S$ is the identity. A diagram chase then shows that $\phi : \mm_S \to \mm_S$ is a bijection, so $\phi(\mm_S) = \mm_S$. Taking powers, $\phi(\mm_S^2) = \mm_S^2$. Then $\psi(\mm_S) = \mm_S$ and $\psi(\mm_S^2) = \mm_S^2$, so there is an induced map $\ol{\psi} : \mm_S / \mm_S^2 \to \mm_S / \mm_S^2$
which is clearly inverse to $\ol{\phi}$.

Conversely, suppose $\ol{\phi}$ is an isomorphism. Then for each $i = 1, \ldots, m$ there exists $x_i \in \mm_S$ such that $\ol{\phi}(\ol{x_i}) = \ol{t_i}$. Then $\phi(x_i) = t_i + u$ for some $u \in \mm_S^2$. Taking powers, $\phi(x_i^j) - t_i^j \in \mm_S^{j+1}$ for all $j \geq 1$. We now show by induction that for each integer $r \geq 2$ there is an element $x_i^{(r)}$ such that $\phi(x_i^{(r)}) \equiv t_i \pmod{\mm_S^{r}}$. The base case is given by taking $x_i^{(2)} = x_i$. For the induction step, suppose $x_i^{(r)} \in A^+_{\vec{c},S}$ is an element such that $\phi(x_i^{(r)}) \equiv t_i \pmod{\mm_S^{r}}$.
Let $a_1, \ldots, a_m \in \Gamma(S, \OO_S)$ be the unique elements such that $\phi(x_i^{(r)}) = t_i + a_1t_1^{r} + \cdots + a_mt_m^{r} \pmod{\mm_S^{r+1}}$, taking $a_i = 0$ when $i \geq c_i$. Set $x_i^{(r+1)} = x_i^{(r)} - a_1x_1^{r} - \cdots - a_mx_m^{r}$. Then
\[
  \phi(x_i^{(r+1)}) = t_i \pmod{\mm_S^{r+1}}.
\]
This proves the inductive step. Now, taking $r$ sufficiently large, $\mm_S^r = 0$, so that $\phi(x_i^{(r)}) = t_i$ for each $i$. Then $\phi(A^+_{\vec{c},S})$ contains the generators of $A^+_{\vec{c},S}$ as an $\OO_S$-algebra, so $\phi$ is surjective. Now $\phi$ is a surjective endomorphism of finite free $\OO_S$-modules, so $\phi$ is an isomorphism.
\end{proof}

The group scheme $G_{\vec{c}}$ acts naturally on $\Ter^g_{\vec{c}}$ by
\begin{align*}
  \Aut_{\OO_S-alg}(A_{\vec{c}}^+ \otimes \OO_S) \times \Ter^g_{\vec{c}}(S) &\to \Ter^g_{\vec{c}}(S) \\
  (\phi, B) &\mapsto \phi(B).
\end{align*}

This extends the action of $T$ from the previous section. In this section, we will be especially interested in the action of a family of automorphisms $\phi_a$ that we introduce now.

\begin{definition}
Denote by $\phi_a$ the automorphism $\phi_a \in G_{\vec{c}}^\circ(\GG_m)$ given by $t_i \mapsto t_i + a^{-1}t_i^{2}$ for each $i = 1, \ldots, m$, where $a$ denotes the standard coordinate on $\GG_m$.
\end{definition}

We remark that $\phi_a$ belongs to the connected component of the identity of $G_{\vec{c}}$ since the limit as $a \to \infty$ of $\phi_a$ is the identity map. We will consider repeatedly taking the limit of the action of $\phi_a$ as $a \to 0$ in the following sense.

\begin{definition}
Let $B \in \Ter^g_{\vec{c}}(k)$ be a subalgebra. Denote by $\lim\limits_{a \to 0} \phi_a(B)$ the $k$-point of $\Ter^g_{\vec{c}}$ obtained by first taking the unique extension of $\phi_a(B|_{\GG_m}) \in \Ter^g_{\vec{c}}(\GG_m)$ to $\Ter^g_{\vec{c}}(k[a])$, then restricting to the vanishing of $a$.
\end{definition}

Intuitively, the limit $\lim\limits_{a \to 0} \phi_a(B)$ attempts to replace $t_i$ with $t_i^2$, so taking the limit will shift the subalgebra into higher and higher degrees. We verify this formally in Lemma \ref{lem:one_aut_limit} below for graded subalgebras. Once degrees are high enough, we will have a so-called partition subalgebra.

\begin{definition}
We say that a point of $\Ter^g_{\vec{c}}(k)$ of the form
\[
  k \cdot 1 + \mathrm{Span} \{ t_i^{d} \mid d_i \leq d < c_i \}
\]
for some integers $d_1, \ldots, d_m$ is a \emphbf{partition subalgebra}.
\end{definition}

We remark that partition subalgebras correspond to partition singularities, which are always smoothable. (See, for example \cite[Lemma 3.2]{bozlee_connectedness}.)

\begin{lemma} \label{lem:one_aut_limit}
Let $B \in \Ter^g_{\vec{c}}$ be a subalgebra with a monomial basis $\{1\} \cup \{ t_i^j \}_{(i,j) \in I}$ where $I \subseteq \{ (i,j) \mid 1 \leq i \leq m, 1 \leq j < c_i \}$. Then
$\lim\limits_{a \to 0} \phi_a(B)$ admits a monomial basis of the form
\[
  \{1\} \cup \{ t_i^{j + \eta(i,j)} \}_{(i,j) \in I}
\]
where $\eta(i,j) \geq 0$ for all $i,j \in I$. Moreover at least one $\eta(i,j) > 0$ unless $B$ is a partition subalgebra.
\end{lemma}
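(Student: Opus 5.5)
The plan is to reduce to a single branch and then to a computation in one Grassmannian. Because $B$ has a monomial basis, $\mm_B = \bigoplus_{i} V_i$ with $V_i = \mathrm{Span}\{t_i^j : (i,j) \in I\}$. Each $\phi_a$ maps $\mm_{(c_i)}$ to itself, so $\phi_a(B) = k\cdot 1 \oplus \bigoplus_i \phi_a(V_i)$ is a join in the sense of Section \ref{sec:functoriality}. Since $\bigvee$ is a closed immersion, the limit can be computed branch by branch, as the limit of $\phi_a(V_i)$ in $\Grass(|I_i|, \mm_{(c_i)})$ with $I_i = \{ j : (i,j) \in I\}$. The limit point lies in $\Ter^g_{\vec{c}}$ because the territory is closed in the Grassmannian.

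Fix a branch $i$, write $t = t_i$ and $c = c_i$, and set $s = a^{-1}$, so that $a \to 0$ means $s \to \infty$. Then
\[
  v_j(s) \coloneqq \phi_a(t^j) = \sum_{k \geq 0} \binom{j}{k} s^k t^{j+k} \pmod{t^{c}}.
\]
The key observation is that this is homogeneous if $t^e$ has weight $e$ and $s$ has weight $-1$. Consequently, for each $J \subseteq \{1, \ldots, c-1\}$ with $|J| = |I_i|$, the Plücker coordinate $p_J(s)$ of $\mathrm{Span}\{v_j(s)\}_{j \in I_i}$ is either zero or a single monomial in $s$ of degree $\sum J - \sum I_i$. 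In $\PP$ of the Plücker space, the limit as $s \to \infty$ is then the point supported on those $J$ for which $\deg_s p_J$ is maximal. Because this limit is fixed by the rescaling $t \mapsto \lambda t$, its Plücker vector must also be supported on $J$ of a single weight. Since $k$ is infinite of characteristic $0$, the limit subspace is spanned by monomials; equivalently, its Plücker vector is a coordinate vector $e_{J^*}$ for a single $J^*$ maximizing $\sum J$ among those $J$ with $p_J \neq 0$. This gives the monomial basis $\{t^{j'}\}_{j' \in J^*}$.

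To get the shape $j \mapsto j + \eta(i,j)$ with $\eta \geq 0$, order rows and columns increasingly. Each $v_j$ has support in degrees $\geq j$, so $p_J \neq 0$ forces $J$ to dominate $I_i$ termwise after sorting: a nonzero maximal minor of an ``upper triangular'' coefficient matrix needs $J$'s $r$-th element to be at least $I_i$'s $r$-th element. So we may pair the sorted elements of $I_i$ with those of $J^*$ and define $\eta \geq 0$ accordingly. (Alternatively, one can use upper semicontinuity of $\dim(V \cap \mathrm{Span}\{t^{\geq d}\})$.)

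For strictness, suppose $B$ is not a partition subalgebra. Then for some branch $i$ there is $j \in I_i$ with $j+1 \notin I_i$ and $j+1 < c$. Take $J = (I_i \setminus \{j\}) \cup \{j+1\}$. The submatrix of coefficients on columns $J$ is upper triangular with $1$'s on the diagonal, except in row $j$. There the diagonal entry sits in column $j+1$ and equals $\binom{j}{1}s = js$; we must check that nothing below the diagonal appears, which holds because no element of $I_i$ lies strictly between $j$ and $j+1$. Hence $p_J = js \neq 0$ in characteristic $0$. So the maximal degree is at least $1$, giving $\sum J^* > \sum I_i$, and some $\eta(i,j) > 0$.

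The main obstacle I expect is the bookkeeping in this triangularity argument: that $p_J$ is genuinely a single monomial, and that the maximizing $J^*$ is unique, which is what makes the limit monomial rather than merely graded. I would handle both cleanly via the weight-homogeneity argument, following the style of Lemma \ref{lem:gamma_limits}, rather than by explicit Gaussian elimination.
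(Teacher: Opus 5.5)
Your proof is correct, and it takes a genuinely different route from the paper's.

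\textbf{The paper's route.} The paper argues by an explicit, controlled Gaussian elimination over $k[a,a^{-1}]$. It rescales each row $(t+a^{-1}t^2)^{j}$ to have trailing coefficient $1$. It then repeatedly cancels repeated trailing columns from the bottom up; only entries of equal $a$-degree are ever combined, so entries stay of the form $d\,a^{r-\ell}$ with $\ell\le r$, and the leading term $t^{j}$ survives. Setting $a=0$ gives the limit basis $t^{r_i^{(\infty)}}$. Strictness comes from the bounds $r_i^{(\infty)}\ge j_{i+1}-1$ and $r^{(\infty)}_{c-g-1}=c-1$, which use that $B$ is closed under multiplication.

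\textbf{Your route.} You exploit the same weight-homogeneity, but globally, through Plücker coordinates. Every maximal minor is a single monomial $C_J s^{\sum J-\sum I_i}$, so the limit is the top-weight part of the Plücker vector. That point lies on the Grassmannian and is fixed by $t\mapsto\lambda t$. Because the weights $1,\dots,c-1$ on $\mm_{(c)}$ are distinct, the fixed points are coordinate subspaces. This settles at once the uniqueness of $J^*$ that you flagged as the main obstacle; no further bookkeeping is needed. Termwise dominance then gives $\eta\ge 0$, and the single minor $p_J=js$ gives strictness.

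\textbf{What each buys.} Your argument is shorter and characterizes the limit intrinsically, as the unique nonvanishing minor of maximal weight. It never uses multiplicative closure of $B$. It also pinpoints the one place characteristic $0$ enters: the nonvanishing of $j$ in $k$ in $p_J=js$. Monomiality and $\eta\ge 0$ need only that $k$ is infinite, which is consistent with Remark \ref{rem:why_char_0}. The paper's elimination instead produces an explicit basis of the extended family over $k[a]$, i.e.\ a $k[a]$-point of $\BTer^g_{\vec{c}}$. It also gives extra information, such as the top exponent moving to $c-1$.

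\textbf{Two small corrections.}
First, the entries below the diagonal in column $j+1$ vanish not because ``no element of $I_i$ lies strictly between $j$ and $j+1$,'' which is vacuous. They vanish because $j+1\notin I_i$, so every later row is indexed by some $j'>j+1$.
Second, the logic of your monomiality step runs in the opposite direction from how you phrased it. The top-degree coordinates automatically share a single weight $\sum J$; hence the limit is torus-fixed; hence it is monomial.
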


The proof is notation heavy, so we first give an example to illustrate the strategy.

\begin{example}
Consider the subalgebra $B$ of $k[t]/t^6$ with basis $1, t^3, t^4$. Then $\phi_a(B|_{\GG_m})$ is the $k[a,a^{-1}]$ algebra with basis $1, (t+a^{-1}t^2)^3, (t+a^{-1}t^2)^4$. Expanding,
\begin{align*}
  (t+a^{-1}t^2)^3 &= t^3 + 3a^{-1}t^4 + 3a^{-2}t^5 \\
  (t+a^{-1}t^2)^4 &= t^4 + 4a^{-1}t^5.
\end{align*}
Thus, with respect to the basis $1, t, t^2, t^3, t^4, t^5$ of $k[a,a^{-1},t]/t^6$, the algebra $\phi_a(B|_{\GG_m})$ is the subalgebra of $k[a,a^{-1},t]/t^6$ spanned by the rows of the matrix
\[
  \begin{bmatrix}
    1 & 0 & 0 & 0 & 0 & 0 \\
    0 & 0 & 0 & 1 & 3a^{-1} & 3a^{-2} \\
    0 & 0 & 0 & 0 &  1 & 4a^{-1}
  \end{bmatrix}.
\]
We now use row operations to find an alternate basis with only nonnegative powers of $a$ that remains a basis after substituting $a = 0$. The basis element 1 will be fixed, so we omit the first row and column. First, we rescale so that all trailing entries are 1.
\[
  \begin{bmatrix}
     0 & 0 & \frac{1}{3}a^2 & a & 1 \\
     0 & 0 & 0 &  \frac{1}{4}a & 1
  \end{bmatrix}
\]
We subtract the bottom row from the first row to cancel the trailing 1 of the first row. Observe that entries remain monomials in $a$.
\[
  \begin{bmatrix}
     0 & 0 & \frac{1}{3}a^2 & \frac{3}{4}a & 0 \\
     0 & 0 & 0 &  \frac{1}{4}a & 1
  \end{bmatrix}
\]
We rescale again so that trailing entries are 1.
\[
  \begin{bmatrix}
     0 & 0 & \frac{4}{9}a & 1 & 0 \\
     0 & 0 & 0 &  \frac{1}{4}a & 1
  \end{bmatrix}
\]
Now all powers of $a$ are positive and trailing entries are 1s lying in distinct columns, ensuring that the basis remains a basis after substituting $a = 0$. The new basis for $\phi_a(B|_{\GG_m})$
is $1, \frac{4}{9}at^3 + t^4, \frac{1}{4}at^4 + t^5$. Plugging in $a = 0$ shows that the limit algebra has monomial basis $1, t^4, t^5$. In terms of the statement of the lemma, $\eta(3) = 1$ and $\eta(4) = 1$ (suppressing the branch index since $m = 1$).
\end{example}

\begin{proof}[Proof of Lemma \ref{lem:one_aut_limit}]
Observe that $\phi_a(B|_{\GG_m})$ has an $\OO_{\GG_m}$ basis
\[
  (t_i + a^{-1}t_i^2)^j =
  \begin{cases}
    \binom{j}{0}t_i^j + \binom{j}{1}a^{-1}t_i^{j+1} + \cdots + \binom{j}{j}a^{-j}t_i^{2j} & \text{ if } 2j < c_i \\
    \binom{j}{0}t_i^j + \binom{j}{1}a^{-1}t_i^{j+1} + \cdots + \binom{j}{c_i - 1 - j}a^{1 + j - c_i}t_i^{c_i - 1} & \text{ if }2j \geq c_i.
  \end{cases}.
\]
We will find a new basis whose limit, obtained by plugging in $a = 0$, remains a basis. This will be a basis of $\lim\limits_{a \to 0} \phi_a(B)$. The method is a controlled Gaussian elimination: after rescaling each row so that its trailing term is $1$, we eliminate repeated trailing columns from bottom to top, preserving the lowest nonzero term in each row. The row reduction is done independently for each $t_i$, so we write out the process only for the unibranch case to unburden the notation. 

Reindexing, we now assume that $B$ has a basis of the form $1, t^{j_1}, \ldots, t^{j_{c - g - 1}}$ where $0 < j_1 < \cdots < j_{c - g - 1} < c$.
Applying the isomorphism $\phi_a : t \mapsto t + a^{-1}t^2$, we obtain a $\OO_{\GG_m}$-basis for $\phi_a(B|_{\GG_m})$ given by $1, (t + a^{-1}t^2)^{j_1}, \ldots, (t + a^{-1}t^2)^{j_{c - g - 1}}$.
For each $i$, let $r_i^{(1)}$ be the exponent on the highest power of $t$ whose coefficient in the expression for the $i$th basis element $(t + a^{-1}t^2)^{j_i}$ is nonzero.
Dividing each basis element by this coefficient, we find a new basis for $\phi_a(B|_{\GG_m})$ given by $b_0 = 1$, and
\[
  b_i^{(1)} = \sum_{\ell = j_i}^{c - 1} d_{i,\ell}^{(1)} a^{\left(r_i^{(1)} - \ell\right)} t^\ell
\]
for $i = 1, \ldots, c - g - 1$ where $d_{i,\ell}^{(1)} \in k$. Observe that
\[
  d_{i,\ell}^{(1)} = \begin{cases}
    \text{nonzero} & \text{ for }j_i \leq \ell < r_i^{(1)} \\
    1 & \text{ for }\ell = r_i^{(1)} \\
    0 & \text{ for }\ell < j_i \text{ or }\ell > r_i^{(1)}
  \end{cases}
\]
and
\[
  r_i^{(1)} = \begin{cases}
      2j_i & \text{if } 2j_i \leq c - 1 \\
      c - 1 & \text{if } 2j_i > c - 1.
  \end{cases}
\]

We arrange the coefficients of each $b_i^{(1)}$ into the rows of a matrix
\[
  \begin{bmatrix}
    d^{(1)}_{1,1}a^{r_1^{(1)} - 1} & d^{(1)}_{1,2}a^{r_1^{(1)} - 2} & \cdots & d^{(1)}_{1,c-1}a^{r_{1}^{(1)} - c +1} \\
    \vdots & \vdots & & \vdots \\
    d^{(1)}_{c - g - 1,1}a^{r_{c - g - 1}^{(1)}-1} & d^{(1)}_{c - g - 1,2}a^{r_{c - g - 1}^{(1)}-2} & \cdots & d^{(1)}_{c - g - 1,c-1}a^{r_{c - g - 1}^{(1)} - c + 1}.
  \end{bmatrix}
\]

If no value of $r_i^{(1)}$ is repeated, we conclude row reduction. Otherwise, let $r^{(1)}$ be the largest repeated value of $r_i^{(1)}$ and let $q^{(1)}$ be the largest index such that $r_{q^{(1)}}^{(1)} = r^{(1)}$. (In other words, the $r^{(1)}$th column is the rightmost column containing multiple trailing entries, and the $q^{(1)}$th row is the lowest row containing such a trailing entry.)
We perform the change of basis
\[
  b_{i}^{(2')} = \begin{cases}
      b_{i}^{(1)} - b_{q^{(1)}}^{(1)} & \text{if } i < q^{(1)} \text{ and } r_i^{(1)} = r_{q^{(1)}}^{(1)} \\
      b_i^{(1)} & \text{otherwise.}
  \end{cases}
\]
That is, we perform row replacement operations to cancel out the trailing entries in the matrix above the $(q^{(1)},r^{(1)})$th
entry. Only coefficients with the same $a$-degree are combined, so we may write $b_i^{(2')} = \sum_{\ell = 1}^{c - 1} d_{i,\ell}^{(2')} a^{r_i^{(1)} - \ell} t^\ell$. Since the $q^{(1)}$th row only had support in columns $j_{q^{(1)}}, \ldots, r_{q^{(1)}}^{(1)}$ and we only subtracted $b_{q^{(1)}}$ from rows above it, $d_{i,\ell}^{(2')} = d_{i,\ell}^{(1)}$ remains nonzero for $\ell = j_i$. Now, for each $i$, set $r_i^{(2)}$ equal to the largest $\ell$ for which $d_{i,\ell}^{(2')} \neq 0$. Then divide each basis element by the corresponding coefficient to obtain a new basis
\[
  b_i^{(2)} = \frac{b_i^{(2')}}{d_{i, r_i^{(2)}}^{(2')}a^{r_i^{(1)}-r_i^{(2)}}}
\]
for $i = 1, \ldots, c - g - 1$.

Writing $b_i^{(2)} = \sum_{\ell = 1}^{c - 1} d_{i,\ell}^{(2)} a^{\left(r_i^{(2)} - \ell\right)} t^\ell$,
observe that for each $i = 1, \ldots, c - g - 1$, we have
  \[
  d_{i,\ell}^{(2)} = \begin{cases}
    0 & \text{ for }\ell < j_i \text{ or }\ell > r_i^{(2)}\\
    1 & \text{ for }\ell = r_i^{(2)} \\
    \text{nonzero} & \text{ for }\ell = j_i.
  \end{cases}
  \]
If there are no repeated values of $r_i^{(2)}$ the process concludes. Otherwise, let $r^{(2)}$ (which is less than $r^{(1)}$) be the greatest repeated value of $r_i^{(2)}$ and repeat the process until no repeated $r_i^{(\ell)}$s remain. The process must terminate since the maximum repeated $r$-value strictly decreases at each step and is bounded below by 1.

After row reduction we have a basis for the family given by
\[
  b_i^{(\infty)} = \sum_{\ell = 1}^{c-1} d_{i,\ell}^{(\infty)} a^{\left(r_i^{(\infty)} - \ell\right)}t^\ell
\]
for $i = 1, \ldots, c - g - 1$, where
\[
  d_{i,\ell}^{(\infty)} = \begin{cases}
    0 & \text{ for }\ell < j_i \text{ or }\ell > r_i^{(\infty)}\\
    1 & \text{ for }\ell = r_i^{(\infty)} \\
    \text{nonzero} & \text{ for }\ell = j_i.
  \end{cases}
\]
and the $r_i^{(\infty)}$s are distinct. Thus, plugging in $a = 0$, the limit subalgebra has monomial basis
\[
  1, t^{r_1^{(\infty)}}, \ldots, t^{r_{c - g - 1}^{(\infty)}}.
\]

Finally, we claim that $r_i^{(\infty)} \geq j_{i+1} - 1$ for $i = 1, \ldots, c - g - 2$ and $r_{c - g - 1}^{(\infty)} = c-1$.
To see that $r_{c - g - 1}^{(\infty)} = c - 1$, observe that the bottom row is unaltered by the row replacement operations, so
\[
  r_{c - g - 1}^{(\infty)} = r_{c - g - 1}^{(1)} = \begin{cases}
      2j_{c - g - 1} & \text{ if } 2j_{c - g - 1} \leq c-1 \\
      c - 1 & \text{ if } 2j_{c - g - 1} > c - 1.
  \end{cases}
\]
The first case is impossible, since $B$ is closed under multiplication and $j_{c - g - 1}$ is the highest power of $t$ contained in $B$.

Now, to see that $r_i^{(\infty)} \geq j_{i+1} - 1$ for $i = 1, \ldots, c - g - 2$, observe that the entries of the $i$th row
of the matrix in columns $1, \ldots, j_{i+1} - 1$ are at most scaled by a common unit after row reduction, since the only rows subtracted from the
$i$th row have support contained in the columns $j_{i+1}, \ldots, c - 1$. Since the support of the $i$th row initially was in
the $j_i$th to $r_i^{(1)}$ columns, we must have $r_i^{(\infty)} \geq \min\{j_{i+1}-1, r_i^{(1)}\} = \min\{ j_{i+1} - 1, 2j_i, c - 1\}$. Since $B$ is closed under multiplication,
$2j_i$ must either be greater than $c - 1$ or equal to $j_{i'}$ for some $i' > i$. In either case, $j_{i+1}-1$ is the minimum. Thus, $r_i^{(\infty)} \geq j_{i+1} - 1$.

The claim implies that $r_i^{(\infty)} = j_i$ for all $i$ if and only if $j_1, \ldots, j_{c - g - 1}$ is $g+1, \ldots, c-1$, i.e., $B$ is a partition subalgebra.
\end{proof}

\begin{remark} \label{rem:why_char_0}
This argument cannot be made to work in characteristic $p$. To see why, suppose $k$ has characteristic $p$ and let $B = k\cdot 1 + k\cdot t^p + k \cdot t^{2p} + \cdots$ be the subalgebra of $A^+_{c}$ generated by $t^p$. We claim that $B$ is fixed by all $\phi \in G_{\vec{c}}$, so the orbit closure does not contain a partition subalgebra. Indeed, if $\phi(t) = \sum_i a_i t^i$, 
then $\phi(t^p) = \phi(t)^p = \sum_i a_i^p t^{ip} \in B$. Since $B$ is finite dimensional and $t^p$ generates $B$, it follows that $\phi(B) = B$.
\end{remark}

Combining the Lemma above with our study of limits under the action of the torus, we can now easily prove the following.

\begin{theorem} \label{thm:aut_orbit_contains_partition_sing}
Let $B \in \Ter^{g}_{\vec{c}}(k)$. Then the closure of the orbit $G^\circ_{\vec{c}} \cdot B$ contains a partition subalgebra.
\end{theorem}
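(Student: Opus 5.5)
First I would reduce to the case of a monomial subalgebra using the torus. The torus $T$ sits inside $G^\circ_{\vec{c}}$: it is connected and consists of algebra automorphisms of $A^+_{\vec{c}}$. Hence $T$-orbit closures lie in $G^\circ_{\vec{c}}$-orbit closures. By Theorem \ref{thm:fixed_points_are_monoids}(ii), the closure of $T\cdot B$ contains a $T$-invariant point $B_0$. By Theorem \ref{thm:fixed_points_are_monoids}(i), $B_0$ has a monomial basis $\{1\}\cup\{t_i^j\}_{(i,j)\in I}$. Since $\overline{G^\circ_{\vec{c}}\cdot B}$ is closed and $G^\circ_{\vec{c}}$-stable, it contains $\overline{G^\circ_{\vec{c}}\cdot B_0}$. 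So it suffices to treat monomial $B$.

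Next I would iterate Lemma \ref{lem:one_aut_limit}. The family $a\mapsto \phi_a(B)$ over $\GG_m$ lies in the orbit $G^\circ_{\vec{c}}\cdot B$, because each $\phi_a$ is a $k$-point of $G^\circ_{\vec{c}}$ for $a\in k^\times$. Its limit at $a=0$ therefore lies in the orbit closure; this uses properness of $\Ter^g_{\vec{c}}$ and the fact that closed sets containing a $\GG_m$-family contain its limit. The lemma says that $B_1=\lim_{a\to 0}\phi_a(B)$ is again monomial, with exponents $j+\eta(i,j)$ where all $\eta\ge 0$, and at least one $\eta>0$ unless $B$ is already a partition subalgebra. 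Each $B_1$ is again a point of $\Ter^g_{\vec{c}}$, so $B_1$ has the same number of basis monomials. Repeating, I get a sequence $B=B_0',B_1,B_2,\dots$, each in the orbit closure of the previous one. By transitivity all of them lie in $\overline{G^\circ_{\vec{c}}\cdot B}$, since that closure is $G^\circ$-stable.

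To finish I need a monovariant. Take the total exponent $\sum_{(i,j)\in I} j$ of the monomial basis. It is bounded above, because all exponents are $<c_i$ and the number of monomials is fixed at $c-m-g$. Each non-partition step strictly increases it. So after finitely many steps some $B_N$ must be a partition subalgebra, which gives the result.

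The one point needing care is that Lemma \ref{lem:one_aut_limit} really does give a strict increase at each step. Its $\eta(i,j)$ is defined with respect to an indexing that matches old and new monomials bijectively. I would check this from the proof: there $r_i^{(\infty)}\ge j_i$ row by row, and the $r$'s are distinct, so the exponent sum increases by $\sum\eta>0$. Everything else is formal, since the substantive work is in the lemma, which I am allowed to assume.
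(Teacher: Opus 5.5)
Your proposal is correct and follows the paper's proof. Like the paper, you reduce to a monomial subalgebra using $T \subseteq G^\circ_{\vec{c}}$ and Theorem \ref{thm:fixed_points_are_monoids}, then iterate $\lim_{a\to 0}\phi_a$ via Lemma \ref{lem:one_aut_limit} until you reach a partition subalgebra; your bounded, strictly increasing exponent-sum monovariant simply makes explicit the termination that the paper asserts with ``the degrees of the basis elements of the $B_i$s increase.''
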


\begin{proof}
Since orbit closures are unions of orbits, it is enough to show that there is a sequence of algebras $B = B_1, \ldots, B_\ell$ such that
$B_{i+1}$ belongs to the closure of the orbit of $B_i$ for each $i$ and $B_\ell$ is a partition subalgebra. By Theorem \ref{thm:fixed_points_are_monoids}, any orbit closure contains a subalgebra with a monomial basis, so we may take $B_2$ to be a subalgebra with a monomial basis. Now, let $B_{i + 1} = \lim\limits_{a \to 0} \phi_a(B_i)$ for $i = 2, 3, \ldots$. By Lemma \ref{lem:one_aut_limit}, the degrees of the basis
elements of the $B_i$s increase until for some sufficiently large $\ell$, $B_{\ell}$ is a partition subalgebra.
\end{proof}

The theorem immediately implies the following corollary, since orbit closures under an irreducible group are irreducible.

\begin{corollary}
Every irreducible component of $\Ter^g_{\vec{c}}$ contains a partition subalgebra. 
\end{corollary}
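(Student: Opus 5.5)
The plan is to combine Theorem \ref{thm:aut_orbit_contains_partition_sing} with the fact that orbit closures of a connected group are irreducible. Let $Z$ be an irreducible component of $\Ter^g_{\vec{c}}$ (over our algebraically closed field $k$ of characteristic zero).

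The first step is to show that $Z$ is stable under $G^\circ_{\vec{c}}$. The group $G^\circ_{\vec{c}}$ is a connected group scheme of finite type over $k$, hence irreducible. Consider the action morphism $a : G^\circ_{\vec{c}} \times Z \to \Ter^g_{\vec{c}}$. Its source is irreducible, since a product of irreducible $k$-varieties over an algebraically closed field is irreducible (after passing to reductions). So its image is irreducible. This image contains $Z$, via the identity element. Because $Z$ is a maximal irreducible closed subset, the closure of the image equals $Z$. Hence $G^\circ_{\vec{c}} \cdot Z \subseteq Z$.

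The second step is to pick a closed point $B \in Z(k)$ lying on no other irreducible component, although any closed point of $Z$ would do. The orbit $G^\circ_{\vec{c}} \cdot B$ is contained in $Z$ by the first step. Since $Z$ is closed, the orbit closure $\overline{G^\circ_{\vec{c}} \cdot B}$ is also contained in $Z$. By Theorem \ref{thm:aut_orbit_contains_partition_sing}, this orbit closure contains a partition subalgebra. Therefore $Z$ contains a partition subalgebra.

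The only point needing care is the irreducibility argument in the first step. In particular, one should check that the action really is defined on $\Ter^g_{\vec{c}} \times \Spec k$ as a morphism of schemes, which follows from the functorial definition of the action. One should also note that $G^\circ_{\vec{c}}$, being a connected algebraic group over a field, is geometrically irreducible. Nonreducedness of $\Ter^g_{\vec{c}}$ is harmless, since irreducible components depend only on the underlying topological space. I expect no real obstacle; the substantive work was already done in Theorem \ref{thm:aut_orbit_contains_partition_sing}.
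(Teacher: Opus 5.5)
Your proof is correct and is essentially the paper's argument: both rest on the irreducibility of $G^\circ_{\vec{c}}$, and hence of orbit closures, together with Theorem~\ref{thm:aut_orbit_contains_partition_sing}. The only difference is that you first show every irreducible component is $G^\circ_{\vec{c}}$-stable via irreducibility of $G^\circ_{\vec{c}} \times Z$, so any closed point of $Z$ works, whereas the paper picks a $k$-point lying on $Z$ alone and uses only that its orbit closure is irreducible.
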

\begin{proof}
Let $Z$ be an irreducible component of $\Ter^g_{\vec{c}}$. Since $\Ter^g_{\vec{c}}$ is finite type and $k$ is algebraically closed, there is a $k$-point $x$ of $Z$ not contained in any other irreducible component of $\Ter^g_{\vec{c}}$. Since the orbit closure $\ol{G^\circ_{\vec{c}} \cdot x}$ is irreducible and contains $x$, it must be contained in $Z$. Then $Z$ contains a partition subalgebra by Theorem \ref{thm:aut_orbit_contains_partition_sing}. 
\end{proof}

We conclude with the proof of our main theorem.

\begin{theorem} \label{thm:close_connectedness}
Let $k$ be a field of characteristic 0 (not necessarily algebraically closed). Then each irreducible component of $\UU_{g,n} \times \Spec k$ intersects the substack of smoothable curves $\mathcal{V}_{g,n} \times \Spec k$.
\end{theorem}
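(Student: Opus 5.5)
We reduce to the territory statement (the Corollary that every irreducible component of $\Ter^g_{\vec{c}}$ contains a partition subalgebra) through a local deformation argument on $\UU_{g,n}$. First, we may assume $k$ is algebraically closed. The irreducible components of $\UU_{g,n} \times \Spec k$ are dominated by components over $\bar{k}$, because $\Spec \bar k \to \Spec k$ is universally open and surjective. Moreover, the smoothable component is geometrically irreducible, since $\moduli_{g,n}$ is. So it suffices to show that every irreducible component $\mathcal{X}$ of $\UU_{g,n} \times \Spec \bar k$ meets $\mathcal{V}_{g,n}$.

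Choose a general geometric point $[C, p_1, \ldots, p_n]$ of $\mathcal{X}$ lying on no other component. Let $x_1, \ldots, x_r$ be its singular points, and let $\nu : \tilde C \to C$ be the normalization. For each $x_j$ choose conductances $\vec{c}^{(j)}$ large enough that $x_j$ gives a point $B_j \in \Ter^{g_j}_{\vec{c}^{(j)}}$; this uses an identification of $\OO_{\tilde C}/\Cond$ at the preimages with $A_{\vec{c}^{(j)}}$. Following the pushout construction of \cite[Section 4]{bozlee_guevara_smyth}, gluing the fixed marked normalization $\tilde C$ along the subschemes $Z_j \cong \Spec A_{\vec{c}^{(j)}}$ gives a morphism
\[
  \Psi : \prod_{j=1}^r \Ter^{g_j}_{\vec{c}^{(j)}} \to \UU_{g,n}, \qquad (B'_j)_j \mapsto C',
\]
with $(B_j)_j \mapsto C$. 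The arithmetic genus is preserved because each $B'_j$ has the same corank $g_j$. The product has irreducible components equal to products of components. Pick an irreducible component $W_j \ni B_j$ of each factor, so that $W = \prod_j W_j$ is irreducible and contains $(B_j)_j$. Then $\Psi(W)$ is irreducible and contains a point of $\mathcal{X}$ that lies on no other component, so $\Psi(W) \subseteq \mathcal{X}$. By the Corollary above, each $W_j$ contains a partition subalgebra $P_j$. Hence $\mathcal{X}$ contains the curve $C' = \Psi((P_j)_j)$, obtained from $\tilde C$ by inserting partition singularities.

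It remains to show that $C'$ lies in $\mathcal{V}_{g,n}$. Partition singularities are smoothable (\cite[Lemma 3.2]{bozlee_connectedness}). A curve whose singularities are all smoothable is itself smoothable, because the local-to-global obstructions vanish for reduced curves: the map from the deformation functor of $C'$ to the product of the local deformation functors of its singularities is smooth. Markings at smooth points add no obstructions. So $C' \in \mathcal{V}_{g,n}$, and $\mathcal{X} \cap \mathcal{V}_{g,n} \neq \emptyset$.

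The main obstacle is the middle step: checking carefully that $\Psi$ is a well-defined morphism landing in $\UU_{g,n}$, and the reduction to components. The pushout must be flat, proper, reduced and of constant genus over the whole product of territories, and connected even if $\tilde C$ is disconnected. Connectedness is fine because the branch gluing is fixed combinatorially. In addition, a general point of $\mathcal{X}$ must be chosen so that the irreducibility argument places $\Psi(W)$ inside $\mathcal{X}$. The characteristic-zero hypothesis enters only through the Corollary, as explained in Remark \ref{rem:why_char_0}.
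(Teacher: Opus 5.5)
Your proof is correct and follows essentially the same route as the paper. You reduce to $\bar k$, take a point lying only on the given component, and realize it as the image of a point of a product of territories under the pushout map. You then push an irreducible family through that point to a curve with only partition singularities, which is smoothable by the local-to-global principle. The only difference is cosmetic: you take the family to be a product of irreducible components of the territories and invoke the Corollary, whereas the paper uses the image of the $G^\circ$-orbit of $\tilde{x}$ and applies Theorem~\ref{thm:aut_orbit_contains_partition_sing} directly, and the Corollary is itself deduced from that theorem.
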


\begin{proof}
Since each irreducible component of $\UU_{g,n} \times \ol{k}$ dominates a component of $\UU_{g,n} \times k$ and points of $\mathcal{V}_{g,n} \times \ol{k}$ map to points of $\mathcal{V}_{g,n} \times k$, it suffices to prove the statement after base change to an algebraic closure of $k$. We therefore assume that $k$ is algebraically closed.

Since \(\UU_{g,n}\times \Spec k\) is an algebraic stack locally of finite type \cite[Corollary B.4]{smyth_zstable}
over the algebraically closed field \(k\), its underlying topological space is
locally Noetherian. In particular, its irreducible components are well-defined,
and each irreducible component contains a \(k\)-point not lying on any other
irreducible component.

Let $W$ be an irreducible component of $\UU_{g,n} \times \Spec k$. Suppose $x = (C, p_1,\ldots,p_n) \in \UU_{g,n}(k)$ is a reduced, connected, proper $n$-pointed curve over $k$ belonging to $W$ but to no other irreducible component of $\UU_{g,n} \times \Spec k$. Let $\tilde{C}$ be the normalization of $C$ and let $Z$ be the conductor locus of the normalization map, isomorphic to $\Spec A_{\vec{c}}$ for some tuple of conductances $(c_1, \ldots, c_m)$. As in the proof of \cite[Theorem 1.1]{bozlee_connectedness},
there is a morphism $\prod_{P \in \mathcal{P}} \Ter^{g(P)}_{\vec{c}|_P} \to \UU_{g,n} \times \Spec k$ for some partition $\mathcal{P}$ of
$\{1,\ldots, m\}$ and function $g : \mathcal{P} \to \NN$ taking some $k$-point $\tilde{x}$ to $x$. The map takes a tuple of subalgebras $(B_P)_{P \in \mathcal{P}}$ of $\prod_{P \in \mathcal{P}} A^+_{\vec{c}|_P}$ to the curve $C_{(B_P)}$ defined as the pushout
\[
\begin{tikzcd}
  Z \ar[r] \ar[d] & \tilde{C} \ar[d] \\
  \Spec (\prod_{P \in \mathcal{P}} B_P) \ar[r] & C_{(B_P)}.
\end{tikzcd}
\]

Write $G$ for the connected component of the identity of $\prod_{P \in \mathcal{P}} G_{\vec{c}|_P} \times \Spec k$. Consider the composition $\phi : G \to \prod_{P \in \mathcal{P}} \Ter^{g(P)}_{\vec{c}|_P} \to \UU_{g,n} \times \Spec k$ where the first map takes $g \in G$ to $g \cdot \tilde{x}$. By Theorem \ref{thm:aut_orbit_contains_partition_sing}, $\ol{\phi(G)}$ contains some point $y$ corresponding to a curve with only partition singularities. Since partition singularities are smoothable, and reduced curves are smoothable if and only if their singularities are smoothable \cite[Corollary 29.10]{hartshorne_deformation_theory}, $y$ belongs to $\mathcal{V}_{g,n}$. Since $G$ is irreducible, $\ol{\phi(G)}$ is irreducible; it contains $x = \phi(e)$. Since $x$ lies on no irreducible component other than $W$, the orbit closure $\ol{\phi(G)}$ is also contained in $W$. Hence $y \in W$. Since $W$ was arbitrary, we conclude that all irreducible components of $\UU_{g,n} \times \Spec k$ intersect the irreducible component of smoothable curves.
\end{proof}

\bibliographystyle{amsalpha}
\bibliography{bibliography}

\end{document}